\newlength{\fixboxwidth}
\newcommand{\re}{\mathbb{R}}\newcommand{\N}{\mathbb{N}}
\newcommand{\zz}{\mathbb{Z}}
\newcommand{\tor}{\mathbb{T}}\newcommand{\T}{\tor^d}
\newcommand{\Z}{{\zz}^d}
\newcommand{\R}{{\re}^d}
\newcommand{\cl}{{\mathcal L}}
\newcommand{\ce}{{\mathcal E}}
\newcommand{\supp}{{\rm supp \, }}
\newcommand{\gf}{\mathcal{F}}
\newcommand{\bproof}{\begin{proof}}
\newcommand{\eproof}{\end{proof}}
\newcommand{\be}{\begin{equation}}
\newcommand{\ee}{\end{equation}}
\newcommand{\beq}{\begin{eqnarray}}
\newcommand{\beqq}{\begin{eqnarray*}}
\newcommand{\eeq}{\end{eqnarray}}
\newcommand{\eeqq}{\end{eqnarray*}}
\numberwithin{equation}{section}
\newtheorem{theorem}{Theorem}[section]
\newtheorem{definition}[theorem]{Definition}
\newtheorem{corollary}[theorem]{Corollary}
\newtheorem{lemma}[theorem]{Lemma}
\newtheorem{proposition}[theorem]{Proposition}
\newtheorem{remark}[theorem]{Remark}
\begin{document}


\title{Weyl and Bernstein Numbers of Embeddings of Sobolev Spaces with Dominating Mixed Smoothness}

\author{Van Kien Nguyen\\
Friedrich-Schiller-University Jena, Ernst-Abbe-Platz 2, 07737 Jena, Germany\\
University of Transport and Communications, Lang Thuong, Dong Da, Hanoi, Vietnam\\
Email: kien.nguyen@uni-jena.de
}


\date{\today}

\maketitle

\begin{abstract}This paper is a continuation of the papers \cite{KiSi} and \cite{Ki}. Here we shall investigate the asymptotic behaviour of Weyl and Bernstein numbers of embeddings of Sobolev spaces with dominating mixed smoothness into Lebesgue spaces.\end{abstract}

\section{Introduction}\label{sec-intro}
Weyl numbers have been introduced by Pietsch \cite{Pi80-2}. Let $X$, $Y$ be Banach spaces. The $n$th Weyl number of the linear operator $T \in \mathcal L(X,Y)$ is given by 
$$ x_n(T):=\sup\{a_n(TA):\ A\in \mathcal L(\ell_2,X),\ \|A\|\leq 1\}\, , \qquad n \in \N\, .$$
Here $a_n(TA)$ is the $n$th approximation number of the operator $TA$. Recall, the $n$th approximation number of the linear operator $T \in \mathcal L(X,Y)$ is defined to be 
$$ a_n(T):=\inf\{\|T-A:X\to Y\|: \ A\in \mathcal L(X,Y),\ \ \text{rank} (A)<n\}\, , \qquad n \in \N\, . $$
The particular interest in Weyl numbers stems from the fact that 
they are the smallest known $s$-numbers satisfying the famous Weyl-type inequalities. This is, if $T: \, X \to X $ is a compact linear operator in a Banach space $X$, then
\beqq
\Big( \prod_{k=1}^{2n-1} |\lambda_k (T)|\Big)^{1/(2n-1)} \le \sqrt{2e} \, \Big( \prod_{k=1}^{n} x_k (T)\Big)^{1/n}
\eeqq
holds for all $n \in \N$, in particular, 
\[
|\lambda_{2n-1} (T)| \le \sqrt{2e} \, \Big( \prod_{k=1}^{n} x_k (T)\Big)^{1/n} \, ,
\]
see Pietsch \cite{Pi80-2} and Carl, Hinrichs \cite{CH}.
Here 
$(\lambda_n(T))_{n=1}^\infty$ is the sequence of non-zero eigenvalues of $T$, ordered in the following way: each eigenvalue is repeated according to its algebraic multiplicity and $|\lambda_n(T)|\geq |\lambda_{n+1}(T)|$, $n\in \N$. Hence, Weyl numbers may be seen as an appropriate tool to control the eigenvalues of $T$.

The behaviour of Weyl numbers has been considered at various places since 1980, for example, Pietsch \cite{Pi80-1,Pi80-2}, Lubitz \cite{Claus}, K\"onig \cite{Koe} and Caetano \cite{Cae1,Cae2,Caed}. They studied Weyl numbers of embeddings $id: B^t_{p_1,q_1}((0,1)^d) \to L_{p_2}((0,1)^d)$, where
$B^t_{p_1,q_1}((0,1)^d)$ denotes the isotropic Besov spaces defined on $(0,1)^d$.
Zhang, Fang, Huang \cite{Fang1} and Gasiorowska, Skrzypczak \cite{GS} investigated the case of embeddings of weighted Besov spaces, defined on $\R$, into Lebesgue spaces. In addition we refer to \cite{KiSi}, where the authors investigated the order of Weyl numbers with respect to embeddings of tensor product Besov spaces. 

Bernstein numbers were introduced  by Mityagin and Pelczy\'nski \cite[page 370]{MiPe63}. Recall that the $n$th Bernstein number of $T\in \mathcal{L}(X,Y)$ is defined to be
$$ b_n(T)= \sup_{L_n}\inf_{\substack{x\in L_n
\\ x\not =0}} \dfrac{\|Tx\,|\,Y\|}{\| x\,|\,X\|} ,$$
where the supremum is taken over all subspaces $L_n$ of $X$ with dimension $n$. Bernstein numbers are well-known to be  lower bounds for nonlinear widths, Kolmogorov and Gelfand numbers, see \cite{Devo2, Pi-74} and \cite[Theorem 3, page 190]{Ti2}. Weyl and Bernstein numbers are related. If $X,Y$ are two Banach spaces and $T\in\mathcal{L}(X,Y)$, then
\be\label{bern1} b_{2n-1}(T)\leq e\Big(\prod_{k=1}^{n}x_k(T)\Big)^{\frac{1}{n}} \ee
holds for all $n\in \N$, see \cite{Pi-08}. It is obvious that if $x_n(T)\asymp n^{-\alpha}(\log n)^{\beta}$, $\alpha, \beta\geq 0$, $n\geq 2$, then we  have $b_n(T)\leq Cx_n(T)$, for $n\in \N$. In this paper, we shall show that Bernstein numbers are also dominated by entropy numbers, i.e., $b_n(T)\leq 2\sqrt{2}e_n(T)$, $n\in \N$, see Lemma \ref{bern-en}. Hence, we have
\be\label{bnxnen}
b_n(T)\leq C \min\{x_n(T), e_n(T)\}\, ,
\ee
for all $n\in \N$ if $x_n(T)$ behaves polynomially.

Bernstein numbers do not have so "nice" properties as Weyl numbers, see Section \ref{sec-pro} or Pietsch \cite{Pi-08}. This is, may be, the reason why the picture concerning the behaviour of Bernstein numbers is less complete than in the case of Weyl numbers. In the literature, the order of Bernstein numbers were studied in different situations. In the one-dimensional periodic situation, the behaviour of Bernstein numbers of the embeddings of Sobolev spaces into Lebesgue spaces was calculated by Tsarkov and Maiorov, see \cite[Theorem 12, page 194]{Ti2}.    
 Galeev in \cite{Gale1} studied the behaviour of Bernstein numbers of embeddings $S^{t}_{p_1}H(\T)\to L_{p_2}(\T)$ and $S^{t}_{p_1,\infty}B(\T)\to L_{p_2}(\T)$. Here $S^{t}_{p_1}H(\T)$ and $S^{t}_{p_1,\infty}B(\T)$ are Sobolev and Nikol'skij spaces of dominating mixed smoothness on the d-dimensional torus $\T$. The picture given in \cite{Gale1} was not complete, e.g., in the cases of low smoothness.

 Denote $\Omega= (0,1)^d$. In this paper we shall give the complete picture, up to some limiting cases,  of the behaviour of Weyl and Bernstein numbers of the embeddings $id: S^t_{p_1}H(\Omega)\to L_{p_2}(\Omega)$. The method we apply here for Weyl numbers could be called standard compared to Vybiral \cite{Vy} or \cite{KiSi}. Because of the polynomial behaviour of $x_n(id: S^t_{p_1}H(\Omega)\to L_{p_2}(\Omega))$, see Theorem \ref{main1}, by taking into account the inequality \eqref{bnxnen} we can obtain the upper bound for Bernstein numbers. In fact we shall show that the inequality \eqref{bnxnen} is the sharp estimate, i.e., 
\beqq 
b_n(id: S^t_{p_1}H(\Omega)&\to& L_{p_2}(\Omega))\\
&\asymp& \min\{x_n(id: S^t_{p_1}H(\Omega)\to L_{p_2}(\Omega))\,,\ e_n(id: S^t_{p_1}H(\Omega)\to L_{p_2}(\Omega))\}\,,
\eeqq
for all $n\in \N$.

The paper is organized as follows. Our main results are discussed in Section \ref{sec-main}. 
In Section \ref{sec-pro} we recall the definition and some properties of Weyl and Bernstein numbers.
Section \ref{sec-fun} is devoted to the function spaces under consideration.
The heart of the paper is Section \ref{sec-seq} in which we prove the behaviour of Weyl and Bernstein numbers of embeddings of certain sequence spaces
associated to spaces of dominating mixed smoothness.
In Section \ref{sec-proof} we shift the results in sequence spaces to the situation of function spaces. Here our main results will be  proved.


\subsection*{Notation}


As usual, $\N$ denotes the natural numbers, $\N_0 := \N \cup \{0\}$,
$\zz$ the integers and
$\re$ the real numbers. For a real number $a$ we put $a_+ := \max(a,0)$.
By $[a]$ we denote the integer part of $a\in \re$.
If $\bar{j}=(j_1, \ldots \, , j_d)\in \N_0^d$,   then we put $ |\bar{j}|_1 := j_1 + \ldots \, + j_d\, .
$ By $\Omega$ we denote the unit cube in $\R$, i.e., $\Omega:= (0,1)^d$.
As usual, the symbol $c $ denotes positive constants 
which depend only on the fixed parameters $t,p,q$ and probably on auxiliary functions, unless otherwise stated; its value may vary from line to line.
The symbol $A \lesssim B$ indicates that there exists a constant $c>0$ such that
 $A \le c \,B$. Similarly $\gtrsim$ is defined. The symbol 
$A \asymp B$ will be used as an abbreviation of
$A \lesssim B \lesssim A$.
For a discrete set $D$ the symbol $|D|$ 
denotes the cardinality of this set.
The symbol $id_{p_1,p_2}^m$ refers to the identity 
\beqq
id_{p_1,p_2}^m:~ \ell_{p_1}^m \to \ell_{p_2}^m\, .
\eeqq
Finally, if we write $\omega_n $, we mean   either $x_n$ or $b_n$.
\section{The main results}\label{sec-main}
First, let us recall that the embedding $id: S^t_{p_1}H(\Omega)\to L_{p_2}(\Omega)$, $1<p_1<\infty,\ 1\leq p_2\leq\infty$, is compact if and only if $ t>(\frac{1}{p_1}-\frac{1}{p_2})_+$, see Vybiral \cite[Theorem 3.17]{Vy}. Since we are exclusively interested in compact embeddings this condition is always present throughout the paper. 
\begin{theorem}\label{main1}
Let $1< p_1,p_2< \infty $ and $ t>(\frac{1}{p_1}-\frac{1}{p_2})_+$. Then we have
$$x_n(id: S^t_{p_1}H(\Omega)\to L_{p_2}(\Omega))\asymp n^{-\alpha}(\log n)^{(d-1)\alpha}\, , \ \ \ n\geq 2,$$
where
\begin{enumerate}
\item {\makebox[3.5cm][l]{$\alpha=t$} if\ \  $ p_1, p_2\leq 2$};
\item {\makebox[3.5cm][l]{$\alpha=t-\frac{1}{2}+\frac{1}{p_2}$} if\ \  $  p_1\leq 2\leq p_2 $};
\item {\makebox[3.5cm][l]{$\alpha=t-\frac{1}{p_1}+\frac{1}{2}$} if\ \  $  p_2\leq 2< p_1   $,\ $t>\frac{1}{p_1}$};
\item {\makebox[3.5cm][l]{$\alpha=t-\frac{1}{p_1}+\frac{1}{p_2}$} if\ \  $2< p_1\leq p_2 $ or $\big(2\leq p_2< p_1\, ,  \ t>\frac{{1}/{p_2}-{1}/{p_1}}{{p_1}/{2}-1}\big)$};
\item {\makebox[3.5cm][l]{$\alpha=\frac{tp_1}{2}$} if\ \ $\big(2\leq p_2<p_1\, ,\ t<\frac{{1}/{p_2}-{1}/{p_1}}{{p_1}/{2}-1}\big)$ or  $\big(p_2\leq 2< p_1\, ,  t<\frac{1}{p_1}\big)$}.
\end{enumerate}
\end{theorem}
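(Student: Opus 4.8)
The plan is to follow the standard scheme for $s$–numbers of embeddings with dominating mixed smoothness: wavelet discretisation, reduction to the finite–dimensional identities $id_{p_1,p_2}^m$, and optimisation of the degrees of freedom over the dyadic levels $\ell=|\bar{j}|_1$. Since $1<p_1,p_2<\infty$, both $S^t_{p_1}H(\Omega)=S^t_{p_1,2}F(\Omega)$ and $L_{p_2}(\Omega)=F^0_{p_2,2}(\Omega)$ admit (boundary–adapted) wavelet characterisations, so by the ideal property of Weyl numbers (Section~\ref{sec-pro}) it suffices to compute $x_n$ for the induced diagonal embedding between the associated sequence spaces $s^t_{p_1,2}f$ and $s^0_{p_2,2}f$; this identification is the content of Section~\ref{sec-fun}. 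At level $\ell$ there are $\asymp\ell^{d-1}$ blocks $\bar{j}$ with $|\bar{j}|_1=\ell$, each of size $\asymp 2^\ell$, carrying the weight $2^{-t\ell}$, so the level has total dimension $D_\ell\asymp 2^\ell\ell^{d-1}$. All further work is then purely about these sequence spaces (Section~\ref{sec-seq}).

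For the \emph{upper bound} I would split the embedding into the part on levels $\ell\le L$ and the tail on $\ell>L$, with $2^L L^{d-1}\asymp n$, i.e.\ $L\asymp\log n$. By additivity and multiplicativity of Weyl numbers and the direct–sum inequality (whose stability exponent is governed by the cotype of $\ell_{p_2}$, namely $2$ if $p_2\le 2$ and $p_2$ if $p_2\ge 2$), the low part is controlled by a norm of the array $\big(2^{-t\ell}\,x_{n_\ell+1}(id_{p_1,p_2}^{D_\ell})\big)$ with $\sum_{\ell\le L}n_\ell\lesssim n$, into which one inserts the known asymptotics of $x_n(id_{p_1,p_2}^m)$ — themselves split according to the positions of $p_1,p_2$ relative to $2$ and of $n$ relative to $m$ — from K\"onig \cite{Koe}, Lubitz \cite{Claus}, Caetano \cite{Cae1,Cae2}, and, in the $p_2<p_1$ range, Vybiral \cite{Vy} and \cite{KiSi}. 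Optimising $\{n_\ell\}$ (a H\"older/Jensen balancing over the $\asymp\ell^{d-1}$ blocks of the critical level) yields the exponent $\alpha$ of cases (i)–(iv), while the factor $(\log n)^{(d-1)\alpha}$ is exactly the cost of the $\asymp(\log n)^{d-1}$ blocks at the saturating level; in the small–smoothness cases (v), where $p_1>2$, the $\ell_{p_1}$–side forces a slowly decaying (in $n$) factor into $x_n(id_{p_1,p_2}^{D_\ell})$ and its balance against $2^{-t\ell}$ produces the saturated exponent $\alpha=tp_1/2$. In the subcases of (i)–(iv) away from the limiting thresholds, where the Besov–scale answers are independent of the fine indices, one may alternatively sandwich $S^t_{p_1,\min(p_1,2)}B\hookrightarrow S^t_{p_1}H\hookrightarrow S^t_{p_1,\max(p_1,2)}B$ (and similarly the target $L_{p_2}$) and quote \cite{KiSi}.

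For the \emph{lower bound} I would localise the embedding to a single block $\bar{j}$, or to the union of all blocks of one well–chosen level $\ell^*$, using that Weyl numbers of such a restriction are $\le$ those of the whole embedding (injectivity/surjectivity of the coordinate maps, Section~\ref{sec-pro}). In cases (i)–(iv) I would take $\ell^*$ so that the total dimension $m\asymp 2^{\ell^*}(\ell^*)^{d-1}$ is a fixed multiple of $n(\log n)^{d-1}$, so that $x_n$ of the rescaled $id_{p_1,p_2}^m$ stays comparable to its value at a bounded index; combined with the weight $2^{-t\ell^*}$ and the $p_1,p_2$–dependent exponent of $x_n(id_{p_1,p_2}^m)$ this reproduces $n^{-\alpha}(\log n)^{(d-1)\alpha}$. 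In case (v) I would instead take a direct sum of blocks with carefully tuned dimensions and exploit the genuinely decreasing initial segment of $x_n(id_{p_1,p_2}^m)$, checking that the extremal configuration matches the one used from above and that the thresholds on $t$ in (iii) and (iv) are precisely where $t-1/p_1+1/2$, respectively $t-1/p_1+1/p_2$, meets $tp_1/2$.

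The main obstacle, I expect, is the bookkeeping that makes the rank–allocation optimisation simultaneously sharp from above and from below across all five regimes. Two points are delicate: the regime $2\le p_2<p_1$, where even $x_n(id_{p_1,p_2}^m)$ is split by a threshold in $n/m$, so that one must track which levels lie on which side of it, and the $L_{p_2}(\ell_2)$ (square–function) structure of the target — rather than a naive $\ell_{p_2}$ structure — must be used to keep the high–level tail summable; and the low–smoothness case (v), which is exactly the range left incomplete in \cite{Gale1}, where the extremal configuration is spread over a band of levels rather than concentrated at one, so that the matching lower bound needs a genuine direct–sum test subspace instead of a single block.
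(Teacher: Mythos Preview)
Your overall architecture---wavelet discretisation to sequence spaces, the level-wise splitting $id^*=\sum_\mu id_\mu^*$, and lower bounds by restriction to a single level---matches the paper exactly (Lemma~\ref{weyl}, inequality~\eqref{sum}, Lemma~\ref{lowa}). For cases (i)--(iv) your sketch is essentially what the paper does; in particular, the Besov sandwich you mention is precisely how the paper handles case (iv) (factoring through $s^t_{p_1,p_1}f$ and citing \cite{KiSi}, see Theorem~\ref{seq-3}). One detail you omit: for $1<p_1<p_2<2$ inside case (i) the paper does not run the rank-allocation scheme directly but instead uses complex interpolation of the target between $s^{t_1}_{p_1,2}f$ and $s^{t_2}_{2,2}f$ together with Proposition~\ref{inter} (see Step~3 of Theorem~\ref{seq-2}); your ``cotype-governed direct-sum inequality'' is not used anywhere.

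The genuine gap is in case (v). For the \emph{upper} bound the paper does not plug in the pointwise asymptotics of $x_n(id_{p_1,p_2}^m)$; it controls the building blocks through the absolutely $(r,2)$-summing norm with $\tfrac{1}{r}=\tfrac{1/p_2-1/p_1}{1-2/p_1}$, obtaining $x_n(id_\mu^*)\lesssim 2^{\mu(-t+1/p_1-1/p_2)}(D_\mu/n)^{1/r}$ uniformly in $n$ (Proposition~\ref{rs2}, Corollary~\ref{cor-weyl}), which is what makes the allocation $n_\mu=[D_\mu\,2^{(\mu-L)\beta+J-\mu}]$ with $L=[Jp_1/2]$ sum cleanly. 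For the \emph{lower} bound your plan is too vague and, as stated, will not produce the exponent $tp_1/2$: restricting to one level with $n\asymp D_\mu$ gives the exponent of case (iii), not (v). The paper instead takes $n=[\mu^{d-1}2^{2\mu/p_1}]\ll D_\mu$ and proves $x_n(id_\mu^*)\gtrsim 2^{-t\mu}$ at this $n$ (Lemma~\ref{low}) by a three-step detour: first establish it for \emph{Bernstein} numbers via a block-wise test subspace $\bigoplus_{|\bar\nu|_1=\mu}L_k^{\bar\nu}$ built from Lemma~\ref{Bern1}(ii); then prove $b_n\le x_n$ for embeddings into the Hilbert block $(s^{0,\Omega}_{2,2}f)_\mu$ via a Kolmogorov-number duality argument; finally interpolate the target back to $p_2$ using Proposition~\ref{inter}. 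None of these ingredients appears in your proposal, and without them the matching lower bound in the low-smoothness regime does not follow.
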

Our results for Bernstein numbers read as follows.

\begin{theorem}\label{main2}
Let $1< p_1,p_2< \infty $ and $ t>(\frac{1}{p_1}-\frac{1}{p_2})_+$. Then we have
$$b_n(id: S^t_{p_1}H(\Omega)\to L_{p_2}(\Omega))\asymp  n^{-\beta}(\log n)^{(d-1)\beta}\, ,\ \ \ n\geq 2,$$
where
\begin{enumerate}
\item {\makebox[3.5cm][l]{$\beta=t$} if\ \  $ p_1\leq  p_2$ or $p_2\leq p_1\leq 2$};
\item {\makebox[3.5cm][l]{$\beta=t-\frac{1}{p_1}+\frac{1}{2}$} if\ \  $  p_2\leq 2< p_1   $,\ $t>\frac{1}{p_1}$};
\item {\makebox[3.5cm][l]{$\beta=t-\frac{1}{p_1}+\frac{1}{p_2}$} if\ \ $2\leq p_2< p_1  $,\ $t>\frac{{1}/{p_2}-{1}/{p_1}}{{p_1}/{2}-1}$};
\item {\makebox[3.5cm][l]{$\beta=\frac{tp_1}{2}$} if\ \ $\big(2\leq p_2<p_1\, ,\ t<\frac{{1}/{p_2}-{1}/{p_1}}{{p_1}/{2}-1}\big)$ or  $\big(p_2\leq 2< p_1\, ,  t<\frac{1}{p_1}\big)$}.
\end{enumerate}
\end{theorem}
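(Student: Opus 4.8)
The strategy is to establish upper and lower bounds separately, matching up to the $\log$-power. For the \textbf{upper bound}, I would invoke the already-announced inequality \eqref{bnxnen}, namely $b_n(T)\le C\min\{x_n(T),e_n(T)\}$, which is available here since by Theorem \ref{main1} the Weyl numbers $x_n(id)$ decay polynomially (times a log factor), so the hypothesis for \eqref{bnxnen} is met. In cases (i)--(iii) of Theorem \ref{main2} one checks directly that the exponent $\beta$ coincides with the Weyl exponent $\alpha$ from Theorem \ref{main1} (for (i), note that when $p_1\le p_2$ we are in case (i), (ii) or (iv) of Theorem \ref{main1} but the relevant $\alpha$ equals $t$ only when additionally $p_1,p_2\le2$; for $p_1\le p_2$ with larger indices one must instead use the entropy-number bound, so the $\min$ in \eqref{bnxnen} is genuinely needed). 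Concretely: whenever $\beta=\alpha$ the bound $b_n\lesssim x_n$ does the job; in the remaining subcases of (i) (e.g.\ $2<p_1\le p_2$, or $p_1\le 2\le p_2$) one uses $b_n\lesssim e_n$ together with the known sharp estimate $e_n(id:S^t_{p_1}H(\Omega)\to L_{p_2}(\Omega))\asymp n^{-t}(\log n)^{(d-1)t}$ (valid in the relevant parameter range), giving $b_n\lesssim n^{-t}(\log n)^{(d-1)t}$. Case (iv) likewise follows since the Weyl exponent there is $\alpha=tp_1/2$ and $\beta=tp_1/2$.

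For the \textbf{lower bound} I would pass, as announced in the introduction, to the associated sequence spaces (Section \ref{sec-seq}) and exhibit, for each case, a subspace $L_n$ of dimension $\asymp n$ on which the embedding is bounded below by the claimed quantity. The natural candidates are coordinate subspaces supported on a union of dyadic blocks $\{\bar j:\ |\bar j|_1=\ell\}$: inside each block the embedding acts like $id_{p_1,p_2}^m$ on $\ell_{p_1}^m\to\ell_{p_2}^m$ with $m\asymp 2^\ell\ell^{d-1}$, and the Bernstein numbers of $id_{p_1,p_2}^m$ are classically known ($b_k(id_{p_1,p_2}^m)=1$ for $k\le m$ when $p_1\le p_2$, and $b_k(id_{p_1,p_2}^m)\asymp (m-k+1)^{1/p_2-1/p_1}$-type behaviour when $p_2<p_1$, with the correct normalization coming from the smoothness weight $2^{-t\ell}$). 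Choosing $\ell$ so that the block size is $\asymp n$ (plus the logarithmic correction from the number $\asymp \ell^{d-1}$ of blocks at level $\ell$) yields the desired lower bound in cases (i)--(iii). For case (iv), where $\beta=tp_1/2$, a single block is too weak; here one must instead spread the $n$-dimensional subspace across \emph{many} dyadic levels and use a volume/Hölder argument (as in the Weyl-number proof of Theorem \ref{main1}, which in this regime exploits the $\ell_{p_1}^m\to\ell_2^m$ step) — this reproduces the $p_1/2$ factor.

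The \textbf{main obstacle} is the lower bound in case (iv), the ``low smoothness'' regime $2\le p_2<p_1$ with $t$ below the threshold, since there the behaviour is not governed by a single dyadic block and the Bernstein-number estimate must be built from a more delicate finite-dimensional configuration; one has to show that no $n$-dimensional subspace can do better than $n^{-tp_1/2}(\log n)^{(d-1)tp_1/2}$, which requires an upper estimate for $b_n$ of the relevant block-diagonal sequence operator that is tight against the construction. Since $b_n\le C x_n$ in the polynomial regime and the Weyl numbers are already pinned down in Theorem \ref{main1}, the matching upper bound in case (iv) is in fact inherited for free — so the real work is confined to producing the extremal subspaces, and verifying in each case that the dimension bookkeeping (the power of $\log n$) comes out as $(d-1)\beta$.
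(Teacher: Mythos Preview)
Your overall architecture matches the paper: upper bounds come from $b_n\lesssim\min\{x_n,e_n\}$ (Lemma~\ref{bern-weyl0} and Lemma~\ref{bern-en}), with the entropy estimate of Proposition~\ref{entro} handling exactly the subcases of (i) where the Weyl exponent exceeds $t$; lower bounds are transferred to the sequence spaces via Lemma~\ref{weyl} and then extracted from a single hyperbolic level $\mu$ using Lemma~\ref{lowa} together with the finite-dimensional Bernstein estimates of Lemma~\ref{Bern1}. So far so good.

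The one place where your plan diverges from what actually works is the lower bound in case (iv). You propose to ``spread the $n$-dimensional subspace across many dyadic levels'' with a volume/H\"older argument. That is not how the exponent $tp_1/2$ arises, and I do not see how a multi-level construction would produce it. What the paper does (Lemma~\ref{low}) is stay at a \emph{single} level $\mu$, but take a subspace of dimension $n\asymp\mu^{d-1}2^{2\mu/p_1}$, which is much smaller than the full block dimension $D_\mu\asymp\mu^{d-1}2^\mu$. The key input is Lemma~\ref{Bern1}(ii): for $1<p_2\le\max(p_2,2)<p_1$ one has $b_n(id_{p_1,p_2}^m)\gtrsim m^{1/p_2-1/p_1}$ provided $n\le[m^{2/p_1}]$. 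Applying this inside each $\bar\nu$-block with $|\bar\nu|_1=\mu$ and taking the direct sum over $\bar\nu$ yields a subspace on which the embedding is bounded below by $2^{-t\mu}$; since $n\asymp\mu^{d-1}2^{2\mu/p_1}$, rewriting gives $b_n\gtrsim n^{-tp_1/2}(\log n)^{(d-1)tp_1/2}$. (For $2\le p_2<p_1$ one first factors through $L_2$ as in Substep~1.2 of Theorem~\ref{seq-4}.) Your formula ``$b_k(id_{p_1,p_2}^m)\asymp(m-k+1)^{1/p_2-1/p_1}$'' is not the relevant estimate here; the point is the regime $n\ll m$, not $n$ close to $m$.

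Also, a smaller correction: inside a level $\mu$ the embedding is $(s^{t,\Omega}_{p_1,2}f)_\mu\to(s^{0,\Omega}_{p_2,2}f)_\mu$, not literally $\ell_{p_1}^{D_\mu}\to\ell_{p_2}^{D_\mu}$; the reduction to the latter costs a factor handled by Lemma~\ref{ba4}, and in the construction of Lemma~\ref{low} one passes through the $b$-type sequence spaces $(s^{t,\Omega}_{p,2}b)_\mu$ to make the block structure diagonal.
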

\begin{remark}\rm
{\rm (i)} Theorems \ref{main1} and \ref{main2} give the final answer about the behaviour of Weyl and Bernstein numbers of embeddings $id: S^t_{p_1}H(\Omega)\to L_{p_2}(\Omega)$ in all cases, except for some limiting cases.\\
{\rm (ii)} The results in Theorem \ref{main2} should be compared with the results of  Galeev in  \cite{Gale1}. The cases (i), (ii) and (iii) are also considered by Galeev. However he was using some additional conditions in smoothness. Galeev  \cite{Gale1} was unable to determine the asymptotic behaviour of 
Bernstein numbers in the cases of low smoothness (iv).
\end{remark}
\begin{remark}\rm
{\rm ( i)} It is interesting that the power of $n$ and the power of $\log n$ coincide in both Theorems \ref{main1} and \ref{main2}.\\
{\rm (ii)} Surprisingly, Theorems \ref{main1} and \ref{main2} show that
\beqq 
b_n(id: S^t_{p_1}H(\Omega)&\to& L_{p_2}(\Omega))\\
&\asymp& \min\{x_n(id: S^t_{p_1}H(\Omega)\to L_{p_2}(\Omega))\,,\ e_n(id: S^t_{p_1}H(\Omega)\to L_{p_2}(\Omega))\}.
\eeqq
Here $e_n$ is the $n$th entropy number, see definition in Section \ref{sec-pro}. For the behaviour of $e_n(id: S^t_{p_1}H(\Omega)\to L_{p_2}(\Omega))$ we refer to \cite{Be,DD,Tem2,Vy}.
\end{remark}
By the abstract properties of Weyl numbers we can extend Theorem \ref{main1} to the following extreme situations.
\begin{theorem}\label{main3}
Let $1<p< \infty$. Then we have
\begin{enumerate}
\item \beqq
x_{n} (id: \ S^{t}_{p}H(\Omega) \to L_\infty(\Omega)) \asymp
\left\{
\begin{array}{lll}
n^{-t+\frac{1}{2}} (\log n)^{(d-1)t}  & \mbox{if}\quad p \le 2\,, \ t>\frac{1}{p}, 
\\ 
n^{-t+\frac{1}{p}} (\log n)^{(d-1)(t- \frac{1}{p} + \frac 12 )}  &  \mbox{if}\quad 2 < p\,, \ t>\frac{1}{p}+\frac{1}{2},
\end{array}
\right.
\eeqq
\item 
and
\beqq
x_n(id: ~S^t_{p}H(\Omega)\to L_1(\Omega))
\asymp 
\left\{\begin{array}{lll}
n^{-t}(\log n)^{(d-1)t}  &  \text{if}\ \ p\leq 2\, ,\ t>0, \\
n^{-t+\frac{1}{p}-\frac{1}{2}} (\log n)^{(d-1)(t-\frac{1}{p}+\frac{1}{2})}  & \text{if}\ \  2<p,\ t> \frac{1}{p},\\
n^{-\frac{tp}{2}} (\log n)^{(d-1)\frac{tp}{2}}  & \text{if}\ \  2<p,\ t< \frac{1}{p},\
\end{array}\right.
\eeqq
for all $n \ge 2$.
\end{enumerate}

\end{theorem}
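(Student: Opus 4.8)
The statement consists of matching upper and lower estimates in the two end-point situations $p_2=\infty$ and $p_2=1$. Each upper estimate will be reduced to Theorem \ref{main1} by factorizing the embedding through a suitable intermediate space and using the multiplicativity $x_{n+m-1}(ST)\le x_n(S)\,x_m(T)$ together with the ideal property of Weyl numbers; each lower estimate will be obtained by restricting the embedding to finite-dimensional spaces of trigonometric polynomials with spectrum in a single hyperbolic-cross layer and invoking the known asymptotics of the Weyl numbers of the finite identities $id_{p,p_2}^m$ (the ingredients already underlying Section \ref{sec-seq}). The target $L_1(\Omega)$ is the easy case for the upper bound: since $\Omega$ has finite measure, $L_2(\Omega)\hookrightarrow L_1(\Omega)$ with norm $1$, whence
\[
x_n\big(id:S^t_pH(\Omega)\to L_1(\Omega)\big)\le\big\|id:L_2(\Omega)\to L_1(\Omega)\big\|\;x_n\big(id:S^t_pH(\Omega)\to L_2(\Omega)\big),
\]
and Theorem \ref{main1} with $p_2=2$ gives $\alpha=t$ for $p\le 2$ (case (i)), $\alpha=t-\tfrac1p+\tfrac12$ for $2<p$ and $t>\tfrac1p$ (case (iv); note $\tfrac{1/p_2-1/p_1}{p_1/2-1}=\tfrac1{p_1}$ at $p_2=2$), and $\alpha=\tfrac{tp}{2}$ for $2<p$ and $t<\tfrac1p$ (case (v)) --- exactly the three exponents of Theorem \ref{main3}(ii).

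\noindent For the target $L_\infty(\Omega)$ the end-point cannot be reached through $L_{p_2}(\Omega)$, and a glance at the $\log$-powers shows that the formal limit $p_2\to\infty$ in Theorem \ref{main1} would produce a power of $\log n$ too small by $\tfrac{d-1}{2}$; the remedy is to interpose a Besov space of dominating mixed smoothness. Using the Jawerth--Franke embedding $S^t_pH(\Omega)=S^t_{p,2}F(\Omega)\hookrightarrow S^{t-1/p}_{\infty,p}B(\Omega)$, a further smoothness loss $S^{t-1/p}_{\infty,p}B(\Omega)\hookrightarrow S^{t-1/p-\varepsilon}_{\infty,q}B(\Omega)$ to restore compactness, and $S^{\sigma}_{\infty,q}B(\Omega)\hookrightarrow L_\infty(\Omega)$ for $\sigma>0$ (hence the condition $t>\tfrac1p$; for $p>2$ the additional requirement $t>\tfrac1p+\tfrac12$ is precisely the threshold $\alpha=t-\tfrac1p>\tfrac12$ beyond which the intermediate embedding still has its generic polynomial rate), multiplicativity yields
\[
x_{2n-1}\big(id:S^t_pH(\Omega)\to L_\infty(\Omega)\big)\le c_\varepsilon\,x_n\big(id:S^t_pH(\Omega)\to S^{t-1/p-\varepsilon}_{\infty,q}B(\Omega)\big),
\]
and the Weyl numbers on the right --- an embedding between spaces of dominating mixed smoothness with second integrability $\infty$ --- are computed by the methods of \cite{KiSi} and Section \ref{sec-seq}; with $q$ chosen correctly and $\varepsilon\downarrow0$ they reproduce the exponents of Theorem \ref{main3}(i). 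Equivalently, one may argue by hand: split $f=P_Lf+(I-P_L)f$ along the dyadic blocks, estimate the tail by Nikol'skij's inequality on each block, $\|(I-P_L)\,id:S^t_pH(\Omega)\to L_\infty(\Omega)\|\ls L^{(d-1)/2}2^{-(t-1/p)L}$ (valid for $t>\tfrac1p$), estimate the Weyl numbers of the finite-rank part $id\circ P_L$ layer by layer from the Weyl numbers of the finite identities into the $L_\infty$-norm of trigonometric polynomials, and balance with $L\asymp\log n$.

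\noindent For the lower bounds fix a level $\ell$ and let $V_\ell$ be the space of trigonometric polynomials with frequencies in the hyperbolic-cross layer $\{\bar j:|\bar j|_1=\ell\}$, of dimension $D_\ell\asymp2^\ell\ell^{d-1}$, on which $\|g\,|\,S^t_pH(\Omega)\|\asymp2^{t\ell}\|g\,|\,L_p(\Omega)\|$. By the ideal property, $x_n(id:S^t_pH(\Omega)\to L_{p_2}(\Omega))$ is bounded below by $2^{-t\ell}$ times the Weyl number of the identity-type map from $\ell_p^{D_\ell}$ into the $L_{p_2}$-norm of polynomials in $V_\ell$, which for $p_2\in\{1,\infty\}$ is estimated from below by (appropriate variants of) the known asymptotics of $x_n(id_{p,1}^{D_\ell})$ and $x_n(id_{p,\infty}^{D_\ell})$ --- the latter producing, in the low-smoothness range, the exponent $\tfrac{tp}{2}$; optimizing over $\ell\asymp\log n$ gives the claimed lower bounds. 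For $L_\infty(\Omega)$ a quicker partial argument uses the norm-$1$ embedding $L_\infty(\Omega)\hookrightarrow L_{p_2}(\Omega)$, so that $x_n(id:S^t_pH(\Omega)\to L_\infty(\Omega))\ge x_n(id:S^t_pH(\Omega)\to L_{p_2}(\Omega))$ for every finite $p_2$; choosing $p_2\asymp\log n$ in Theorem \ref{main1} and keeping track of the $p_2$-dependence of the constants recovers the exponent $n^{-\alpha}$ together with the full power $(\log n)^{(d-1)\alpha}$.

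\noindent The main obstacle is the $L_\infty$ upper bound for $p\le2$: the naive factorizations --- through $L_{p_2}(\Omega)$, through $S^{t-1/p+1/2}_2H(\Omega)$, or through a Besov space with a carelessly chosen fine index --- miss the factor $(\log n)^{(d-1)/2}$ (some of them miss even the power of $n$), so one must either pin down the intermediate Besov space with exactly the right second index and control its behaviour as $\varepsilon\downarrow0$, or carry out the layer-by-layer allocation of the rank budget for $id\circ P_L$ by hand, exploiting the Hilbertian ($q=2$) structure of $S^t_pH(\Omega)=S^t_{p,2}F(\Omega)$.
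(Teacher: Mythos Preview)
Your treatment of the $L_1$ target is essentially fine for the upper bound (the paper factors through $S^0_{1,2}F(\Omega)$ rather than $L_2(\Omega)$, but your route works equally well). For the $L_1$ lower bound, however, your layer-restriction idea runs into the difficulty that the $L_1$-norm on a hyperbolic layer is \emph{not} equivalent to an $\ell_1$-norm on the coefficients (Littlewood--Paley fails at $p=1$, so $(s^{0,\Omega}_{1,2}f)_\mu$ is not $\ell_1^{D_\mu}$), and you never explain how to bridge this. The paper avoids the issue entirely by the interpolation property of Weyl numbers (Proposition~\ref{inter}): from $\|f|L_{p_0}\|\le\|f|L_1\|^{1-\theta}\|f|L_p\|^\theta$ one gets $x_{2n-1}(S^t_pH\!\to\!L_{p_0})\lesssim x_n^{1-\theta}(S^t_pH\!\to\!L_1)\,x_n^\theta(S^t_pH\!\to\!L_p)$, and solving for the $L_1$ factor using Theorem~\ref{main1} gives the lower bound directly.

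The real gaps are in the $L_\infty$ part. For the upper bound you correctly diagnose that naive factorizations lose $(\log n)^{(d-1)/2}$, but your remedies are either unjustified (Weyl numbers of embeddings into $S^\sigma_{\infty,q}B$ are not covered by \cite{KiSi} or Section~\ref{sec-seq}) or left as a sketch (the ``by hand'' layer allocation). The paper's device is much simpler and you miss it: factor through $S^r_2H(\Omega)$ with any $r\in(\tfrac12,\,t-(\tfrac1p-\tfrac12)_+)$. Then $x_n(S^t_pH\!\to\!S^r_2H)$ comes from Theorem~\ref{main1} via lifting, while $x_n(S^r_2H\!\to\!L_\infty)=a_n(S^r_2H\!\to\!L_\infty)\asymp n^{-r+1/2}(\log n)^{(d-1)r}$ is known from \cite{Te93,CKS} together with \eqref{an-xn}; multiplying the two gives exactly the claimed rate with the correct log-power. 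For the lower bound your layer argument again stumbles on norm identification (the $L_\infty$-norm on a layer is not an $\ell_\infty$-norm), and the $p_2\asymp\log n$ trick needs quantitative constants that Theorem~\ref{main1} does not provide. The paper instead uses multiplicativity in the other direction: from $x_{2n-1}(S^t_pH\!\to\!L_2)\le x_n(S^t_pH\!\to\!L_\infty)\,x_n(L_\infty\!\to\!L_2)$ and Lemma~\ref{rs1} with $\pi_2(id:L_\infty(\Omega)\!\to\!L_2(\Omega))=1$ one gets $x_n(L_\infty\!\to\!L_2)\le n^{-1/2}$, whence $x_n(S^t_pH\!\to\!L_\infty)\gtrsim n^{1/2}\,x_{2n-1}(S^t_pH\!\to\!L_2)$, and Theorem~\ref{main1} finishes.
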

\begin{remark}\label{rem}
\rm
{\rm (i)} The behaviour of $x_n\big(id: \ S^{t}_{p}H(\Omega) \to L_\infty(\Omega)\big)$ in case $2< p,\ t\in \big(\frac{1}{p},\frac{1}{2}+\frac{1}{p}\big]$ is open.\\
{\rm (ii)} Beside the result of Temlyakov for Kolmogorv numbers, see \cite{Te96},
\beqq
d_{n} (id: S^{t}_{p}H(\tor^2) \to L_\infty(\tor^2)) \asymp n^{-t}\, (\log n)^{t+\frac{1}{2}} \, , \qquad p\geq 2\,,\ \ t>\frac{1}{2}, 
\eeqq
 and approximation numbers, see \cite{Te93} and also Cobos, K\"uhn, Sickel \cite{CKS},
\be\label{app}
 a_{n} (id: \, S^{t}_{2}H (\T) \to L_\infty (\T)) \asymp
n^{-t+\frac 12}(\log n)^{(d-1)t}\, ,\qquad t>\frac{1}{2} \,,\ \ n\geq 2\, ,
\ee
we are not aware of any other result giving the exact order of $s$-numbers of $id: \ S^{t}_{p}H(\Omega) \to L_\infty(\Omega) $. Note that \eqref{app} also holds true for Gelfand and Weyl numbers since
\be\label{an-xn}
x_n(T:\ H\to Y)\ =\ c_n(T:\ H\to Y)\ =\ a_n(T:\ H\to Y)
\ee
if $H$ is a Hilbert space and $T\in \mathcal{L}(H,Y)$, see \cite[Proposition 2.4.20]{Pi-87}.
\end{remark}

\section{Weyl and Bernstein numbers - Properties}\label{sec-pro}
Weyl numbers are special $s$-numbers. Let $X,Y,X_0,Y_0$ be Banach spaces.
 An $s$-function is a map $s$ assigning to every operator $T\in \mathcal L(X,Y)$ a scalar sequence $\{s_n(T)\}_{n\in \N}$ such that the following conditions are satisfied:
\begin{enumerate}
\item[(a)] $\|T\|=s_1(T)\geq s_2(T)\geq...\geq 0 $;
\item[(b)] $ s_{n+m-1}(S+T)\leq s_n(S)+ s_m(T) $ for all $S\in \mathcal L(X,Y)$ and $m,n=1,2, \ldots \, $;
\item[(c)] $s_n(BTA)\leq \|B\| \, \cdot \, s_n(T) \, \cdot \, \|A\|$ for all $A\in \mathcal L(X_0,X)$, $B\in \mathcal L(Y,Y_0)$;
\item[(d)] $s_n(T)=0$ if $\text{rank}(T)<n$ for all $n\in \N$; 
\item[(e)] $s_n(id: \ell_2^n\to \ell_2^n)=1$ for all $n\in \N$.
\end{enumerate}

An $s$-function is called multiplicative if
\begin{enumerate}
\item[(f)]$ s_{n+m-1}(ST)\leq s_n(S)\, s_m(T) $ for all $S\in \mathcal L(Y,Z)$ and $m,n=1,2, \ldots \, $.
\end{enumerate}
Let us recall some well-known $s$-numbers:
\begin{enumerate}
\item 
Approximation and Weyl numbers are multiplicative $s$-numbers, see  \cite[2.3.3]{Pi-87}.
\item The $n$th Kolmogorov number of the linear operator $T \in \mathcal L(X,Y)$ is defined to be 
\be
d_n(T)= \inf_{L_{n-1}}\sup_{\|x|X\|\leq 1}\inf_{y\in L_{n-1}}\|Tx-y|Y\|. \label{def1}
\ee
Here the outer supremum is taken over all linear subspaces  $L_{n-1}$ of dimension ($n-1$)  in $Y$. Kolmogorov numbers are multiplicative $s$-numbers, see, e.g., \cite[Theorem 11.9.2]{Pi80-2}.
\item The $n$th Gelfand number of the linear operator $T \in \mathcal L(X,Y)$ is defined to be 
$$ c_n (T) := \inf\Big\{\|\, T\, J_M^X\, \|: \ {\rm codim\,}(M)< n\Big\},$$
where $J_M^X:M\to X$ refers to the canonical injection of $M$ into $X$. 
Gelfand numbers are multiplicative $s$-numbers, see  \cite[Proposition 2.4.8]{Pi-87}.
\end{enumerate}

Weyl and Gelfand numbers share the common interpolation property, see \cite{KiSi,Tr70}.
\begin{proposition}\label{inter}
Let $0<\theta<1$.
Let $X,Y, Y_0,Y_1$ be Banach spaces. 
Further we assume  $Y_0\cap Y_1\hookrightarrow  Y$ and the existence of  a positive constant $C$ such that
\be\label{weylextra3}
\| y|Y\| \leq C \, \| y|Y_0\|^{1-\theta}\|y|Y_1\|^{\theta}\qquad  \text{for all}\quad  y\in Y_0\cap Y_1.
\ee
Then, if $
T\in  \mathcal{L}(X,Y_0) \cap  \mathcal{L}(X,Y_1) \cap  \cl (X,Y)
$
we obtain 
\beqq
x_{n+m-1}(T:~X\to Y)\le C\,  x_n^{1-\theta}(T:~ X\to Y_0)\, x_m^{\theta}(T:~X\to Y_1)
\eeqq
for all $n,m \in \N$. Here $C$ is the same constant as in \eqref{weylextra3}.
\end{proposition}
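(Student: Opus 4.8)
The plan is to reduce the statement to the corresponding interpolation inequality for Gelfand numbers and then transfer it to Weyl numbers. First I would establish that, under the same hypotheses,
\[
c_{n+m-1}(T:X\to Y)\ \le\ C\, c_n^{1-\theta}(T:X\to Y_0)\, c_m^{\theta}(T:X\to Y_1)
\]
for all $n,m\in\N$, and afterwards exploit the defining formula $x_k(T)=\sup\{a_k(TA):A\in\cl(\ell_2,X),\ \|A\|\le1\}$ together with the fact that for operators whose domain is a Hilbert space the approximation, Gelfand and Weyl numbers all coincide, see \cite[Proposition 2.4.20]{Pi-87}.

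For the Gelfand estimate I would fix $\varepsilon>0$ and choose subspaces $M_0,M_1\subseteq X$ with ${\rm codim\,}(M_0)<n$, ${\rm codim\,}(M_1)<m$, $\|TJ_{M_0}^X:M_0\to Y_0\|\le c_n(T:X\to Y_0)+\varepsilon$ and $\|TJ_{M_1}^X:M_1\to Y_1\|\le c_m(T:X\to Y_1)+\varepsilon$; this is possible by the definition of the Gelfand numbers. Put $M:=M_0\cap M_1$. Then ${\rm codim\,}(M)\le{\rm codim\,}(M_0)+{\rm codim\,}(M_1)\le(n-1)+(m-1)<n+m-1$, and this is precisely where the index shift to $n+m-1$ comes from. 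For $x\in M$ with $\|x|X\|\le1$ one has $Tx\in Y_0\cap Y_1\hookrightarrow Y$, and since $x$ lies in both $M_0$ and $M_1$, the norm interpolation inequality \eqref{weylextra3} gives
\[
\|Tx|Y\|\ \le\ C\,\|Tx|Y_0\|^{1-\theta}\,\|Tx|Y_1\|^{\theta}\ \le\ C\,\|TJ_{M_0}^X\|^{1-\theta}\,\|TJ_{M_1}^X\|^{\theta}\ \le\ C\,\big(c_n(T:X\to Y_0)+\varepsilon\big)^{1-\theta}\big(c_m(T:X\to Y_1)+\varepsilon\big)^{\theta}.
\]
Taking the supremum over all such $x$ yields $c_{n+m-1}(T:X\to Y)\le\|TJ_M^X:M\to Y\|\le C(c_n+\varepsilon)^{1-\theta}(c_m+\varepsilon)^{\theta}$, and letting $\varepsilon\to0$ finishes the Gelfand inequality.

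To pass to Weyl numbers I would take an arbitrary $A\in\cl(\ell_2,X)$ with $\|A\|\le1$ and apply the Gelfand inequality to $TA$, which belongs to $\cl(\ell_2,Y)\cap\cl(\ell_2,Y_0)\cap\cl(\ell_2,Y_1)$ because $T$ does so into the corresponding spaces. Since the domain $\ell_2$ is Hilbertian, $a_k(TA:\ell_2\to Z)=c_k(TA:\ell_2\to Z)$ for $Z\in\{Y,Y_0,Y_1\}$, hence
\begin{align*}
a_{n+m-1}(TA:\ell_2\to Y)&= c_{n+m-1}(TA:\ell_2\to Y)\ \le\ C\,c_n^{1-\theta}(TA:\ell_2\to Y_0)\,c_m^{\theta}(TA:\ell_2\to Y_1)\\
&= C\,a_n^{1-\theta}(TA:\ell_2\to Y_0)\,a_m^{\theta}(TA:\ell_2\to Y_1)\ \le\ C\,x_n^{1-\theta}(T:X\to Y_0)\,x_m^{\theta}(T:X\to Y_1),
\end{align*}
where the last inequality uses $\|A\|\le1$ and the definition of Weyl numbers. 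Taking the supremum over all such $A$ gives the claim, with the same constant $C$ as in \eqref{weylextra3} throughout.

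The argument is largely bookkeeping; the two load-bearing inputs are the codimension estimate ${\rm codim\,}(M_0\cap M_1)\le{\rm codim\,}(M_0)+{\rm codim\,}(M_1)$, which is what produces the index $n+m-1$, and the identity $a_k=c_k$ for operators out of a Hilbert space, which is exactly what allows the Gelfand argument to be recycled for $x_n$. A direct approach via approximation numbers would break down, since finite-rank approximants of $T$ in $\cl(X,Y_0)$ and in $\cl(X,Y_1)$ need not be compatible; routing the whole argument through Gelfand numbers on a Hilbert domain is the device that circumvents this obstacle.
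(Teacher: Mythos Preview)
Your argument is correct. The paper does not actually supply a proof of this proposition; it merely cites \cite{KiSi,Tr70}. The route you take---first establishing the interpolation inequality for Gelfand numbers via the codimension estimate ${\rm codim\,}(M_0\cap M_1)\le{\rm codim\,}(M_0)+{\rm codim\,}(M_1)$, and then lifting it to Weyl numbers by precomposing with $A\in\cl(\ell_2,X)$ and invoking $a_k=c_k$ for operators out of a Hilbert space---is the standard one and is essentially what the cited references contain.
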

Next we shall discuss  some properties of Bernstein numbers. It is obvious that Bernstein numbers satisfy (a), (c), (d) and (e). However, they are not $s$-numbers because they fail to satisfy property (b), see \cite{Pi-08}. Bernstein numbers satisfy a weaker inequality than (b), namely
\begin{enumerate}
\item [(b')] $ b_{n}(S+T)\leq \|S\|+ b_n(T) $ for all $S, \,T\in \mathcal L(X,Y)$ and $n\in \N$.
\end{enumerate}
Bernstein numbers are also not multiplicative, see again \cite{Pi-08}. It is proved that Bernstein numbers are dominated by Gelfand and Kolmogorov numbers, see \cite{Pi-74}. In some special cases Bernstein numbers are bounded by Weyl numbers. 
\begin{lemma}
\label{bern-weyl0}
Let $X,Y$ be two Banach spaces, $T \in\mathcal{L}(X,Y)$ and $\alpha,\beta\in \mathbb{R},\ \beta\geq 0$. Assume that $x_n(T)\asymp n^{-\alpha}(\log n)^{\beta}$, $n\geq 2$. Then
$$b_n(T)\lesssim x_n(T) ,$$ 
for all $n\in \mathbb{N}$. Moreover, if $Y$ is a Hilbert space we have $b_n(T)\asymp x_n(T)$.
\end{lemma}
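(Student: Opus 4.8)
The plan is to prove the two assertions separately, since they rest on different ingredients.

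For the upper estimate $b_n(T)\lesssim x_n(T)$ the point of departure is inequality \eqref{bern1}, $b_{2n-1}(T)\le e\big(\prod_{k=1}^{n}x_k(T)\big)^{1/n}$, so everything reduces to showing that the geometric mean $\big(\prod_{k=1}^{n}x_k(T)\big)^{1/n}$ has the same order as $x_n(T)$. This is the step where the precise shape of the sequence $(x_n(T))$ must be used — it would fail for an arbitrary non-increasing null sequence. Writing $x_k(T)\le c\,k^{-\alpha}(\log k)^{\beta}$ for $k\ge 2$ and $x_1(T)=\|T\|$, and estimating $(\log k)^{\beta}\le(\log n)^{\beta}$ for $2\le k\le n$ (here $\beta\ge 0$ enters), one gets $\prod_{k=1}^{n}x_k(T)\le C^{\,n}\,(n!)^{-\alpha}(\log n)^{\beta(n-1)}$. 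Since $n!\ge(n/e)^{n}$ and $\alpha\ge 0$ (which is forced by monotonicity of $(x_n(T))$), taking $n$-th roots yields $\big(\prod_{k=1}^{n}x_k(T)\big)^{1/n}\le C'\,n^{-\alpha}(\log n)^{\beta}\asymp x_n(T)$. Plugging this into \eqref{bern1} gives $b_{2n-1}(T)\lesssim x_n(T)\asymp x_{2n-1}(T)$; the even indices are handled by $b_{2n}(T)\le b_{2n-1}(T)$ (monotonicity of Bernstein numbers, property (a)) together with $x_{2n-1}(T)\asymp x_{2n}(T)$, and $n=1$ is trivial since $b_1(T)=\|T\|=x_1(T)$.

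For the reverse inequality in the case that $Y$ is a Hilbert space, I would argue directly from the definition $x_n(T)=\sup\{a_n(TA):A\in\mathcal L(\ell_2,X),\ \|A\|\le 1\}$. Fix such an $A$. The operator $TA\colon\ell_2\to Y$ acts between Hilbert spaces, and for such operators the approximation and Bernstein numbers coincide, $a_n(TA)=b_n(TA)$; this follows from the spectral theorem applied to $(TA)^{*}TA$ together with the identity $\|TAz\,|\,Y\|=\|((TA)^{*}TA)^{1/2}z\,|\,\ell_2\|$, and needs no compactness assumption. On the other hand $b_n(TA)\le b_n(T)$: given an $n$-dimensional subspace $M\subseteq\ell_2$, either $A$ fails to be injective on $M$, so that $\inf_{0\ne z\in M}\|TAz\,|\,Y\|/\|z\,|\,\ell_2\|=0$, or $A(M)$ is an $n$-dimensional subspace of $X$ and, using $\|Az\,|\,X\|\le\|z\,|\,\ell_2\|$,
\[
\inf_{0\ne z\in M}\frac{\|TAz\,|\,Y\|}{\|z\,|\,\ell_2\|}\ \le\ \inf_{0\ne z\in M}\frac{\|TAz\,|\,Y\|}{\|Az\,|\,X\|}\ =\ \inf_{0\ne x\in A(M)}\frac{\|Tx\,|\,Y\|}{\|x\,|\,X\|}\ \le\ b_n(T).
\]
Taking the supremum over all such $M$ gives $b_n(TA)\le b_n(T)$, hence $x_n(T)=\sup_{\|A\|\le1}a_n(TA)=\sup_{\|A\|\le1}b_n(TA)\le b_n(T)$; combined with the upper estimate this yields $b_n(T)\asymp x_n(T)$.

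The two places that need a little care are the geometric-mean estimate — where the exact polynomial-times-logarithm form of $x_n(T)$ (in particular $\alpha\ge0$, $\beta\ge0$) is exploited via $n!\ge(n/e)^{n}$ — and the Hilbert-space identity $a_n=b_n$: the inequality $b_n(S)\le a_n(S)$ is elementary and holds for arbitrary Banach spaces (if $\rank R<n$, every $n$-dimensional subspace meets $\ker R$), while the reverse inequality for Hilbert spaces is the min--max characterization of singular values. All other steps are routine.
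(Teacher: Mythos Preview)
Your proof is correct. The paper itself does not prove this lemma in full but refers to \cite{Ki} and notes that the argument is based on inequality \eqref{bern1}; your first part carries out precisely this computation, and your second part supplies a clean direct argument (via $a_n(TA)=b_n(TA)$ for $TA$ between Hilbert spaces and the elementary inequality $b_n(TA)\le b_n(T)$) for the Hilbert-space case, which the paper leaves entirely to the reference.
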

The proof of  Lemma \ref{bern-weyl0} is given in \cite{Ki}. It is based on the inequality \eqref{bern1}, see \cite{Pi-08}. Next we consider the relation between  Bernstein and entropy numbers. The $n$th (dyadic) entropy
number of $T \in\mathcal{L}(X,Y)$ is defined as
$$
e_n (T):=\inf\{ \varepsilon
>0: T(B_X) \text{ can be covered by } 2^{n-1}
\text{ balls in } Y \text{ of radius } \varepsilon\}\, ,
$$
where $B_X:= \{x \in X: \: \|x|X\| \le 1\}$ denotes the closed
unit ball of $X$. Note that entropy numbers are not $s$-numbers. We have the following lemma.
\begin{lemma}\label{bern-en}Let $X,Y$ be Banach spaces and $T\in \mathcal{L}(X,Y)$. Then we have
\beqq
b_{n}(T)\ \leq\ 2\sqrt{2}\, e_n(T),\ \ \ n\geq 1.
\eeqq
\end{lemma}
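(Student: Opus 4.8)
The goal is the inequality $b_n(T)\le 2\sqrt 2\, e_n(T)$ for every bounded operator $T\colon X\to Y$ and every $n\ge 1$. The idea is to fix an $n$-dimensional subspace $L_n\subset X$ on which $T$ is almost norm-attaining in the Bernstein sense, compare the volumes of $T(B_{L_n})$ and of a covering of $T(B_X)$ by $2^{n-1}$ balls, and derive a contradiction if $b_n(T)$ were too large. Concretely, I would proceed as follows.

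First, reduce to an $n$-dimensional problem. Given $\varepsilon>0$, choose an $n$-dimensional subspace $L_n\subset X$ with $\inf\{\|Tx|Y\|/\|x|X\|: x\in L_n,\ x\neq 0\}\ge b_n(T)-\varepsilon$. In particular $T$ restricted to $L_n$ is injective, so $Z_n:=T(L_n)$ is an $n$-dimensional subspace of $Y$, and for all $z\in Z_n$ one has $\|(T|_{L_n})^{-1}z|X\|\le (b_n(T)-\varepsilon)^{-1}\|z|Y\|$. Hence $T(B_{L_n})\supseteq (b_n(T)-\varepsilon)\, B_{Z_n}$, where $B_{Z_n}$ is the unit ball of $Y$ intersected with $Z_n$. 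Fix $\delta:=e_n(T)+\varepsilon$; by definition of entropy numbers $T(B_X)$, and therefore $T(B_{L_n})$, can be covered by $2^{n-1}$ balls in $Y$ of radius $\delta$. Intersecting these balls with $Z_n$ we get a covering of $(b_n(T)-\varepsilon)B_{Z_n}$ by $2^{n-1}$ sets, each contained in a ball of radius $\delta$ in the $n$-dimensional normed space $Z_n$.

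Second, run the volume argument in $Z_n$. Normalise Lebesgue measure ${\rm vol}$ on $Z_n$. A ball of radius $\delta$ in $Z_n$ has volume $\delta^n\,{\rm vol}(B_{Z_n})$, while $(b_n(T)-\varepsilon)B_{Z_n}$ has volume $(b_n(T)-\varepsilon)^n\,{\rm vol}(B_{Z_n})$. Covering the latter by $2^{n-1}$ of the former forces
\[
(b_n(T)-\varepsilon)^n \le 2^{n-1}\,\delta^n,
\]
i.e. $b_n(T)-\varepsilon \le 2^{(n-1)/n}\,(e_n(T)+\varepsilon)\le 2\,(e_n(T)+\varepsilon)$. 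Letting $\varepsilon\to 0$ gives $b_n(T)\le 2\,e_n(T)$, which is in fact a bit stronger than the claimed constant; the looser constant $2\sqrt2$ leaves room if one prefers to avoid the (harmless) subtlety that the covering balls need not be centred inside $Z_n$ — one may instead replace each covering ball by a ball of radius $2\delta$ centred on a point of $Z_n$, or simply absorb the factor, and the conclusion still follows.

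**Main obstacle.** The only genuine point requiring care is the passage from a covering of $T(B_X)$ by balls of $Y$ to a covering of the slice $(b_n(T)-\varepsilon)B_{Z_n}$ by comparably small balls of the finite-dimensional space $Z_n$: the centres of the $Y$-balls need not lie in $Z_n$, so intersecting with $Z_n$ a ball $B(y_0,\delta)$ that meets $Z_n$ gives a set contained in $B_{Z_n}(z_0,2\delta)$ for any $z_0\in Z_n\cap B(y_0,\delta)$. This is exactly where the extra factor (bounded by $2$, hence the slack up to $2\sqrt2$) enters; once this is handled, the finite-dimensional volume comparison and the limit $\varepsilon\to0$ are routine.
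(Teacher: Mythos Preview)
Your approach is correct in spirit and genuinely different from the paper's, but there is a slip in the bookkeeping of the constant.

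\textbf{Comparison with the paper.} The paper does not use a volume argument at all. It restricts $T$ to the chosen subspace $L_n$, writes the restriction as $JS$ with $S:L_n\to F_n$ an isomorphism and $J:F_n\hookrightarrow Y$, and then applies the Carl--Triebel inequality $1=|\lambda_n(\mathrm{id}_{L_n})|\le\sqrt{2}\,e_n(\mathrm{id}_{L_n})$ together with the ideal-type estimate $e_n(\mathrm{id}_{L_n})\le\|S^{-1}\|e_n(S)$ and the injection property $e_n(S)\le 2e_n(JS)=2e_n(TE)\le 2e_n(T)$. This immediately yields $b_n(T)-\varepsilon\le 2\sqrt{2}\,e_n(T)$. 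Your volume comparison is more self-contained---no Carl--Triebel, no black-box injection estimate---and in principle gives the sharper bound $b_n(T)\le 2^{(n-1)/n}e_n(T)\le 2\,e_n(T)$.

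\textbf{The gap.} Your ``safe'' fallback of doubling the radii does \emph{not} recover the constant $2\sqrt{2}$: replacing each $Y$-ball of radius $\delta$ by a $Z_n$-ball of radius $2\delta$ gives
\[
(b_n(T)-\varepsilon)^n\le 2^{n-1}(2\delta)^n,\qquad\text{hence}\qquad b_n(T)-\varepsilon\le 2^{2-1/n}\delta,
\]
and for $n\ge 3$ one has $2^{2-1/n}>2\sqrt{2}$ (the bound approaches $4$). So the sentence ``simply absorb the factor, and the conclusion still follows'' is not correct for the stated constant. To get the lemma as stated (indeed, the stronger bound with constant $2$) you must justify that each slice $B_Y(y_i,\delta)\cap Z_n$ has $n$-dimensional volume at most $\delta^n\,\mathrm{vol}(B_{Z_n})$. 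This is true: working in the finite-dimensional span of $Z_n$ and the centres $y_i$, the function $v\mapsto\mathrm{vol}_n\big((v+\delta B_Y)\cap Z_n\big)^{1/n}$ is concave (Brunn--Minkowski) and even (symmetry of $B_Y$), so its maximum is at $v=0$, where the slice is exactly $\delta B_{Z_n}$. Once you add this one line, your argument is complete and in fact improves the constant to $2$.
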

\begin{proof} Without loss of generality we assume that $b_n(T)>0$. Then for every $\epsilon>0$, $\epsilon<b_n(T)$, there exists a linear subspace $L_n$ of dimension $n$ in $X$ such that
\beqq
0<b_n(T)-\epsilon \leq \frac{\|Tx\,|\,Y\|}{\|x\,|\,X\|}\, ,\ \ \forall x\in L_n.
\eeqq
Denote by $E$ the canonical embedding of $L_n$ into $X$.  Then $TE$ induces an isomorphism $S$ between $L_n$ and $F_n:=TE(L_n)$. It is obvious that $\|S^{-1}:F_n\to L_n\|\leq (b_n(T)-\epsilon)^{-1}$. By $J$ we denote the canonical embedding from $F_n$ into $Y$. Let us consider the  diagram
\beqq
\begin{CD}
X  @ > T >> Y \\
 @A E AA @A J AA\\
L_n @ > S >> F_n. 
\end{CD}
\eeqq
By $\lambda_{n}(id: L_{n}\to L_{n})$ we denote the $n$th eigenvalue of the identity $id: L_{n}\to L_{n}$. The Carl-Triebel inequality, see \cite{CT}, and abstract property of entropy numbers yield
\beqq
1=\lambda_{n}(id: L_{n}\to L_{n})
\ \leq\ \sqrt{2}\, e_n(id: L_{n}\to L_{n})\ \leq \ \sqrt{2}\,\|S^{-1}\|\, e_n(S).
\eeqq
Because $J$ is an injection we have $e_n(S)\leq 2e_n(JS)$, see \cite[page 14]{CS}. Consequently we obtain
\beqq
1\ \leq \ 2\sqrt{2}\,\| S^{-1}\|\,e_n(JS)\ = \ 2\sqrt{2}\,\| S^{-1}\|\,e_n(TE)
\ \leq\  2\sqrt{2}\,\| S^{-1}\|\,e_n(T).
\eeqq
This implies
\beqq
b_n(T)-\epsilon &\leq& 2\sqrt{2}\,e_n(T).
\eeqq
Letting $\epsilon\downarrow 0$ we finish the proof.
\end{proof}
\begin{remark}\rm
The proof given in Lemma \ref{bern-en} is similar to the proof of Lemma 2 in \cite{Pi-08}.
\end{remark}
For later use, let us recall a result proved in \cite{Pi-74}.
\begin{lemma}\label{bern-gel}
Let $X, Y$ be  Banach spaces and $\dim(X)=\dim(Y)=m$. If $T\in \mathcal{L}(X,Y)$  is invertible then
$$b_n(T)c_{m-n+1}(T^{-1})=1\, ,\ \ n\in \mathbb{N},\ n\leq m.$$
\end{lemma}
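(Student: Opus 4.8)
The plan is to derive the identity by unwinding the definitions of $b_n$ and of $c_{m-n+1}$ and then matching the two extremal problems through the bijection $M\mapsto T^{-1}(M)$ between subspaces of $Y$ and of $X$. Since $\dim X=\dim Y=m<\infty$, an invertible $T$ has $T^{-1}\in\mathcal{L}(Y,X)$, so both factors in the claimed identity are well defined. Put $k:=m-n+1$. By the definition of the Gelfand numbers, $c_k(T^{-1})=\inf\{\|T^{-1}J_M^Y\|:\ M\subseteq Y,\ {\rm codim}(M)<k\}$, and ${\rm codim}(M)<k$ is equivalent to $\dim M\ge m-(k-1)=n$. First I would observe that $M\mapsto\|T^{-1}J_M^Y\|$ is monotone with respect to inclusion of subspaces, so the infimum is already realized over the subspaces of the smallest admissible dimension; hence $c_k(T^{-1})=\inf_{\dim M=n}\|T^{-1}J_M^Y\|$.

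Next, using that $J_M^Y$ is an isometric embedding and substituting $z=T^{-1}y$ (so that $y$ runs through $M$ exactly when $z$ runs through $N:=T^{-1}(M)$), one gets for every $n$-dimensional subspace $M$
\[
\|T^{-1}J_M^Y\|=\sup_{0\neq y\in M}\frac{\|T^{-1}y\|}{\|y\|}=\sup_{0\neq z\in N}\frac{\|z\|}{\|Tz\|}=\Big(\inf_{0\neq z\in N}\frac{\|Tz\|}{\|z\|}\Big)^{-1},
\]
where the last inner infimum is strictly positive because $N$ is finite dimensional and $T$ is injective, so $z\mapsto\|Tz\|$ is a norm on $N$ equivalent to $\|\cdot\|$. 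As $M$ ranges over the $n$-dimensional subspaces of $Y$, $N=T^{-1}(M)$ ranges over all $n$-dimensional subspaces of $X$. Taking infima over these subspaces, and using that the infimum of the reciprocals of positive numbers equals the reciprocal of their supremum, yields
\[
c_k(T^{-1})=\Big(\sup_{\dim N=n}\ \inf_{0\neq z\in N}\frac{\|Tz\|}{\|z\|}\Big)^{-1}=\big(b_n(T)\big)^{-1},
\]
which is precisely $b_n(T)\,c_{m-n+1}(T^{-1})=1$ (and $b_n(T)>0$ since $T$ is invertible, so no division by zero occurs).

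The whole thing is essentially a formal manipulation, so rather than a genuine obstacle there are just a few points needing care: (a) reducing the Gelfand-number infimum to subspaces of dimension exactly $n$, which rests on the trivial monotonicity $M_1\subseteq M_2\Rightarrow\|T^{-1}J_{M_1}^Y\|\le\|T^{-1}J_{M_2}^Y\|$; (b) verifying that $M\mapsto T^{-1}(M)$ is dimension-preserving and onto the $n$-dimensional subspaces of $X$, which is immediate from invertibility of $T$; and (c) the strict positivity of all infima involved, which is what legitimizes passing to reciprocals and interchanging $\inf$ with $\sup$. I expect step (a) to be the one most easily glossed over in a careless write-up.
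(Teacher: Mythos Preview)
Your argument is correct. The reduction of the Gelfand infimum to subspaces of dimension exactly $n$ via monotonicity, the bijection $M\mapsto T^{-1}(M)$ between $n$-dimensional subspaces of $Y$ and of $X$, and the reciprocal identity $\sup_{0\neq y\in M}\|T^{-1}y\|/\|y\|=(\inf_{0\neq z\in N}\|Tz\|/\|z\|)^{-1}$ are all valid, and the positivity needed to pass to reciprocals is guaranteed by invertibility and finite dimensionality.

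Note, however, that the paper does not give its own proof of this lemma: it is stated as a known result and attributed to Pietsch \cite{Pi-74}. So there is no proof in the paper to compare against; you have supplied a self-contained verification where the paper simply cites the literature. Your direct unwinding of the definitions is essentially the standard argument and is perfectly adequate for this purpose.
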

Finally we turn to the behaviour of Weyl and Bernstein numbers of the embeddings $id_{p_1,p_2}^m: \ell_{p_1}^m\to \ell_{p_2}^m$. We refer to  \cite{Cae1,Cae2,Koe,Claus,Fang1} for the behaviour of Weyl numbers and to \cite{Gale1,Ki} for Bernstein numbers.
\begin{lemma}\label{Weyl1}
\begin{enumerate} 
\item Let $m,n\in \N$ and $2n\leq m$. Then we have
\begin{subnumcases}{x_n(id_{p_1,p_2}^{m})\asymp}
1&\text{ if } $2\leq p_1\leq  p_2 \leq \infty$, \label{wth1}  \\
n^{\frac{1}{p_2}-\frac{1}{p_1}}&\text{ if } $1\leq p_1\leq  p_2\leq 2$,\label{wth2}   \\
n^{\frac{1}{2}-\frac{1}{p_1}}&\text{ if } $1\leq p_1\leq 2 \leq p_2\leq \infty$,\nonumber \\
 m^{\frac{1}{p_2}-\frac{1}{p_1}}  &\text{ if } $1\leq p_2<p_1\leq 2$. \label{wth4}
\end{subnumcases}
\item  Let $2\leq p_2<p_1\leq \infty$ and $k,n \in \N$, $k\geq 2$. Then   $x_n(id_{p_1,p_2}^{kn})\asymp 1$.
\item  Let $1\leq p_2\leq 2< p_1\leq \infty$ and $m,n\in \mathbb{N}$. Then $x_n(id_{p_1,p_2}^m)\gtrsim m^{\frac{1}{p_2}-\frac{1}{2}}$\ \ if\ \  $n\leq \frac{m}{2}$.
\end{enumerate}
\end{lemma}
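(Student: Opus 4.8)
The plan is to collect the three assertions about Weyl numbers of the finite-dimensional identities $id_{p_1,p_2}^m$ from the literature on $s$-numbers of $\ell_p^m\to\ell_q^m$, and to sketch the short arguments that fill the gaps between the quoted results and the precise statements needed here. For part (i), the estimate \eqref{wth1} is immediate from the norm-one embedding $\ell_{p_1}^m\hookrightarrow\ell_{p_2}^m$ when $p_1\le p_2$ together with property (a) of $s$-functions and property (e) applied through a rank-$n$ comparison, giving both bounds. The case $1\le p_1\le p_2\le 2$, i.e.\ \eqref{wth2}, follows from the classical result of K\"onig \cite{Koe} (see also \cite{Claus,Cae1}); the case $1\le p_1\le 2\le p_2$ is obtained from the factorization $id_{p_1,p_2}^m = id_{2,p_2}^m\circ id_{p_1,2}^m$, where $x_n(id_{2,p_2}^m)\le\|id_{2,p_2}^m\|\le 1$ and $x_n(id_{p_1,2}^m)\asymp n^{1/2-1/p_1}$ by \cite{Koe}, together with the matching lower bound which comes from part (iii) of the lemma (with $p_2=2$) and monotonicity in $p_2$. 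Finally \eqref{wth4}, the case $1\le p_2<p_1\le 2$, is the content of Lubitz \cite{Claus} and Caetano \cite{Cae1}: here the relevant scale is $m$ rather than $n$ because one can restrict to a block of coordinates and use that on $\ell_{p_1}^m$ the Weyl numbers do not decay below $m^{1/p_2-1/p_1}$.

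For part (ii), the point is that in the range $2\le p_2<p_1\le\infty$ the identity $id_{p_1,p_2}^{N}$ has Weyl numbers bounded below by an absolute constant as long as $N$ is a fixed multiple of $n$. The upper bound $x_n(id_{p_1,p_2}^{kn})\le 1$ is again trivial only when $p_1\le p_2$, which is not the present case; instead one uses $x_n(id_{p_1,p_2}^{kn})\le x_n(id_{2,p_2}^{kn})\cdot\|id_{p_1,2}^{kn}:\ell_{p_1}^{kn}\to\ell_2^{kn}\|$ — but this norm is $(kn)^{1/2-1/p_1}$, which is too large, so the correct route is the direct estimate $x_n(id_{p_1,p_2}^{kn})\le x_n(id_{\infty,2}^{kn})$-type bounds combined with the Gluskin/K\"onig lower bound $x_n(id_{p_1,p_2}^{kn})\gtrsim 1$ valid for $2\le p_2<p_1$; this is recorded in \cite{Cae1,Cae2,Koe}. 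I would simply cite those sources. For part (iii), the bound $x_n(id_{p_1,p_2}^m)\gtrsim m^{1/p_2-1/2}$ for $n\le m/2$ in the regime $1\le p_2\le 2<p_1\le\infty$ follows from the multiplicativity of Weyl numbers: writing $id_{p_1,2}^m = id_{p_2,2}^m\circ id_{p_1,p_2}^m$ and using $x_{2n-1}(id_{p_1,2}^m)\le x_n(id_{p_2,2}^m)\,x_n(id_{p_1,p_2}^m)$ together with $x_n(id_{p_1,2}^m)\gtrsim m^{1/2-1/p_1}$ (from part (i), the $\ell_{p_1}^m\to\ell_2^m$ case read with $p_2=2$ once $p_1>2$, via duality $x_n(id_{p_1,2}^m)=x_n(id_{2,p_1'}^m)$) and $x_n(id_{p_2,2}^m)\le \|id_{p_2,2}^m\|=m^{1/2-1/p_2}$ — wait, that gives the wrong direction; the clean argument instead uses $b_n$ or a volume estimate. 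In fact the cited bound is classical and appears in K\"onig \cite{Koe} and Caetano \cite{Cae1}, so I would invoke it directly.

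The main obstacle I anticipate is purely bookkeeping: matching the exact exponents and the exact ranges (open versus closed endpoints at $p=2$, the switch between the parameter $n$ and the parameter $m$ in \eqref{wth4} and in (iii)) to the formulations available in \cite{Cae1,Cae2,Koe,Claus,Fang1}. None of the three parts requires a genuinely new argument; each is either an elementary factorization/duality manipulation using properties (a)--(f) of multiplicative $s$-numbers, or a verbatim citation. Accordingly the "proof" will mostly consist of assembling references and spelling out the two or three short factorization steps indicated above, and I would not expect to need any deeper tool than the multiplicativity of Weyl numbers and the norm-one embeddings between $\ell_p^m$ spaces.
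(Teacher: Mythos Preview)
Your approach matches the paper's: the paper gives no proof of this lemma at all, simply prefacing it with ``We refer to \cite{Cae1,Cae2,Koe,Claus,Fang1} for the behaviour of Weyl numbers,'' and your proposal likewise concludes that each part is a verbatim citation with at most short factorization manipulations. Your attempted elaborations contain some rough spots (e.g.\ the lower bound in the case $1\le p_1\le 2\le p_2$ of (i), and the self-corrected argument for (iii)), but since your fallback is always to invoke the cited sources directly, and that is precisely what the paper does, there is no gap relative to the paper.
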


\begin{lemma}\label{Bern1}
\begin{enumerate}
\item Let $1\leq p_1,p_2\leq \infty$ and $n\in \N$. It holds
\begin{subnumcases}{b_n(id_{p_1,p_2}^{2n}) \gtrsim}
1&\text{ if } $2\leq p_2\leq p_1$,\label{th1} \\
n^{\frac{1}{p_2}-\frac{1}{2}}&\text{ if } $ p_2\leq 2\leq p_1$,\label{th2} \\
n^{\frac{1}{p_2}-\frac{1}{p_1}}&\text{ if } $ p_1\leq p_2 \ \text{ or }\ p_2\leq p_1\leq 2$.\label{th3}
\end{subnumcases}
\item Let $1< p_2\leq \max(p_2,2)< p_1\leq \infty$ and $n,m\in \N$. It holds
\beqq
 b_n(id_{p_1,p_2}^m)\gtrsim m^{\frac{1}{p_2}-\frac{1}{p_1}},\ \ 1\leq n\leq \big[m^{\frac{2}{p_1}}\big].
 \eeqq
\end{enumerate}
\end{lemma}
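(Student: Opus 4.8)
The strategy is to reduce everything to two basic building blocks: a duality bound via Lemma \ref{bern-gel}, and direct constructions of well-conditioned $n$-dimensional subspaces. For part (1), I would treat the three regimes separately. In case \eqref{th1}, with $2\le p_2\le p_1$, the diagonal inclusion $\ell_{p_1}^{2n}\to\ell_{p_2}^{2n}$ restricted to any coordinate subspace of dimension $n$ already has norm ratio $\gtrsim 1$ because $\|x|\ell_{p_2}^{2n}\|\ge \|x|\ell_{p_1}^{2n}\|$ when $p_2\le p_1$ — actually here we want the reverse, so the cleanest route is: take $L_n$ spanned by $n$ vectors of disjoint support each normalized in $\ell_{p_1}$; on such $L_n$ one has $\|x|\ell_{p_2}\|\asymp\|x|\ell_{p_1}\|$ uniformly, giving $b_n\gtrsim 1$. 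For \eqref{th3} the same disjoint-support / "fat vector" trick works: choosing $L_n$ spanned by $n$ disjointly supported blocks each of which is a normalized constant vector of length $\sim 2n/n=2$, a short computation with the explicit $\ell_p$ norms of such blocks yields the ratio $\asymp n^{1/p_2-1/p_1}$ from below. For \eqref{th2}, with $p_2\le 2\le p_1$, I would instead invoke Lemma \ref{bern-gel} with $X=\ell_{p_1}^{2n}$, $Y=\ell_{p_2}^{2n}$ (so $m=2n$) and $T=id_{p_1,p_2}^{2n}$: then $b_n(id_{p_1,p_2}^{2n})=c_{n+1}(id_{p_2,p_1}^{2n})^{-1}$, and we feed in a known upper bound for Gelfand numbers $c_{n+1}(id_{p_2,p_1}^{2n})\lesssim n^{1/2-1/p_2}$ (which in the relevant range $p_2\le 2\le p_1$ follows from the standard Gelfand-number estimates for $id_{p_2,2}^{2n}$ composed with the norm-one inclusion $\ell_2^{2n}\to\ell_{p_1}^{2n}$, using $p_1\ge 2$), which inverts to the claimed lower bound.

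For part (2), with $1<p_2\le\max(p_2,2)<p_1\le\infty$ and the range $1\le n\le[m^{2/p_1}]$, I again pass to Gelfand numbers via Lemma \ref{bern-gel}: one needs $b_n(id_{p_1,p_2}^m)=c_{m-n+1}(id_{p_2,p_1}^m)^{-1}$, so it suffices to show $c_{m-n+1}(id_{p_2,p_1}^m)\lesssim m^{1/p_1-1/p_2}$ for $n$ in that range, i.e. a bound on a Gelfand number of index close to $m$. Here I would use the known asymptotics of $c_k(id_{p_2,p_1}^m)$ for $2\le p_1\le\infty$ (Gluskin-type / Kashin-type estimates): in the regime $k\ge m^{2/p_1}$ one has $c_k(id_{p_2,p_1}^m)\asymp \min\{1, m^{1/p_1-1/p_2} (m/k)^{\ldots}\}$, and for $k=m-n+1\ge m-[m^{2/p_1}]+1$, which is comparable to $m$, this reduces to $c_{m-n+1}(id_{p_2,p_1}^m)\asymp m^{1/p_1-1/p_2}\cdot(\text{bounded factor})$. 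Alternatively, and perhaps more self-containedly, one constructs directly an $n$-dimensional subspace $L_n\subset\ell_{p_1}^m$ on which $\|x|\ell_{p_2}^m\|\gtrsim m^{1/p_1-1/p_2}\|x|\ell_{p_1}^m\|$: a random subspace (in the spirit of Kashin/Gluskin) of dimension $n\sim m^{2/p_1}$ has, with positive probability, the property that $\|x|\ell_1^m\|\gtrsim\sqrt m\,\|x|\ell_2^m\|$ on it, and by Hölder and the relations among $\ell_{p_2},\ell_1,\ell_2,\ell_{p_1}$ this transfers to the desired two-sided comparison on $L_n$.

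The main obstacle I anticipate is the dimension bookkeeping in part (2): getting the threshold exactly $[m^{2/p_1}]$ out of the random-subspace construction (respectively out of the Gelfand-number asymptotics) is the delicate point, since the exponent $2/p_1$ is precisely the largest dimension at which a subspace of $\ell_{p_1}^m$ can still be "Euclidean-like" enough to force the $\ell_{p_2}$-norm down to the minimal order $m^{1/p_1-1/p_2}$. Establishing this requires either a careful invocation of the sharp form of the Gelfand-number asymptotics for $id_{p_2,p_1}^m$ (citing, e.g., the results collected in \cite{Vy} or the classical literature), or a volume/probabilistic argument with explicit constants. Everything else — the disjoint-support constructions in part (1) and the passage through Lemma \ref{bern-gel} — is routine. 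I would therefore organize the proof as: (a) dispatch \eqref{th1} and \eqref{th3} by explicit subspaces; (b) dispatch \eqref{th2} by duality plus a cited Gelfand bound; (c) dispatch part (2) by duality plus the sharp near-$m$ Gelfand-number asymptotic, with the dimension count as the one point requiring care.
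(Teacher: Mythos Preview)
The paper does not prove Lemma~\ref{Bern1}; it is quoted from the references \cite{Gale1,Ki}. So there is no proof in the paper to compare against, and the relevant question is whether your sketch is correct on its own terms.

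There is a genuine gap in your treatment of case~\eqref{th3} in the sub-regime $p_2\le p_1\le 2$. The disjoint-support construction you propose --- $L_n$ spanned by $n$ constant blocks of length~$2$ --- gives, for $x=\sum_j c_j f_j$ with $f_j$ the $j$-th block,
\[
\frac{\|x|\ell_{p_2}^{2n}\|}{\|x|\ell_{p_1}^{2n}\|}
=2^{\frac{1}{p_2}-\frac{1}{p_1}}\,\frac{\|c|\ell_{p_2}^n\|}{\|c|\ell_{p_1}^n\|}\,,
\]
and when $p_2\le p_1$ the infimum over $c\ne 0$ of the second factor equals $1$ (attained at $c=e_1$), not $n^{1/p_2-1/p_1}$. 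Hence your subspace only certifies $b_n(id_{p_1,p_2}^{2n})\gtrsim 1$, which is strictly weaker than the claimed $n^{1/p_2-1/p_1}\ge 1$ in this regime. (For the other half of \eqref{th3}, namely $p_1\le p_2$, the exponent is nonpositive and the block argument --- or even the plain coordinate subspace $\Span\{e_1,\dots,e_n\}$ --- does suffice.) To get the correct bound when $p_2\le p_1\le 2$ you need a subspace on which \emph{every} vector is well spread: either a Kashin-type random $n$-dimensional subspace of $\ell_2^{2n}$, on which $\|x\|_p\asymp n^{1/p-1/2}\|x\|_2$ holds simultaneously for all $p\le 2$, or, equivalently, route through Lemma~\ref{bern-gel} and the Gelfand estimate $c_{n+1}(id_{p_2,p_1}^{2n})\lesssim n^{1/p_1-1/p_2}$ for $p_2\le p_1\le 2$ --- exactly the mechanism you already invoke for \eqref{th2}.

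The remainder of your plan is sound in outline: case~\eqref{th1} is in fact trivial since $\|x|\ell_{p_2}\|\ge\|x|\ell_{p_1}\|$ whenever $p_2\le p_1$; the duality route via Lemma~\ref{bern-gel} for \eqref{th2} and for part~(ii) is the standard one and, as you correctly flag, hinges on the sharp Gelfand asymptotics for $id_{p_2,p_1}^m$ at indices close to the full dimension. Just be aware that the sub-case $p_2\le p_1\le 2$ of \eqref{th3} requires the same non-elementary input and cannot be handled by coordinate blocks.
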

\section{Function spaces of dominating mixed smoothness}\label{sec-fun}
\subsection{Definition}
Let us define the Sobolev spaces of dominating mixed smoothness.
\begin{definition}\rm Let $1<p<\infty$ and $t\in \re$. Then $S^t_{p}H(\R)$ is the collection of all $f\in S'(\R)$ such that
\beqq
\| f|S^t_{p}H(\R)\|=\Big\|\gf^{-1}\Big[\prod_{i=1}^{d}(1+\xi_i^2)^{\frac{t}{2}}\gf f(\xi)\Big](\cdot)\Big|L_p(\R)\Big\| 
\eeqq
is finite. Here $\xi=(\xi_1,\cdots,\xi_d)\in \R$.
\end{definition}
\begin{remark}\rm
{\rm (i)} It is obvious that if $f_j \in H^t_{p}(\re)$, $j=1, \ldots \, , d,$ then $\prod_{j=1}^d f_j (x_j) \in S^t_{p}H(\R)$ and 
\[
 \Big\| \, \prod_{j=1}^d f_j (x_j) \, \Big| S^t_{p}H(\R)\Big\| = \prod_{j=1}^d \|\, f_j \, |H^t_{p} (\re)\| \,  .  
\]
Here $H^t_{p}(\re)$ is the Sobolev space of fractional order $t$ defined on $\re$. For the tensor product structure of the space $S^t_{p}H(\R)$ we refer to \cite{SUt}. \\
{\rm (ii)} In the literature sometimes the notation $MW_{p}^t(\R)$ is used instead of $S^t_pH(\R)$.\\
{\rm (iii)} By $S^m_{p}W(\R)$, $m\in \N_0$, we denote the classical Sobolev spaces of dominating mixed smoothness:
\beqq
S^m_pW(\R):= \Big\{f\in L_p(\R): \|f|S^m_pW(\R)\|:=\sum_{|\bar{\alpha}|_{\infty} \leq m}\|D^{\bar{\alpha}}f|L_p(\R)\|<\infty\Big\}\, .
\eeqq 
Here $\bar{\alpha}=(\alpha_1,...,\alpha_d)\in \N_0^d$ and $|\bar{\alpha}|_{\infty}=\max_{i=1,...,d}|\alpha_i|$.  We have $S^m_pH(\R)=S^m_{p}W(\R)$ in the sense of equivalent norms, see \cite[Theorem 2.3.1]{ST}.
\end{remark}
Sobolev spaces $ S^t_pH(\R)$ represent special cases of the Triebel-Lizorkin spaces of dominating mixed smoothness $S^{t}_{p,q}F(\R)$, see Theorem \ref{equal}. Let us introduce the function spaces
    $S^{t}_{p,q}F(\R)$. For details we  mainly refer
    to \cite[Chapter 2]{ST} and \cite[Chapter 1]{Vy}. The reader who is interested in other descriptions of these spaces may consult \cite{Baz1, Baz2, U1}. Let   $\varphi_0(x)\in C_0^{\infty}({\re})$ with $\varphi_0(x) = 1$ on $[-1,1]$ and $\supp\varphi_0 \subset [-\frac{3}{2},\frac{3}{2}]$. For $j\in \N$ we define
      $$
         \varphi_j(x) = \varphi_0(2^{-j}x)-\varphi_0(2^{-j+1}x).
      $$
   For $\bar{k} = (k_1,...,k_d) \in {\N}_0^d$ the function
    $\varphi_{\bar{k}}(x) \in C_0^{\infty}(\R)$ is defined by
    $$
        \varphi_{\bar{k}}(x) := \varphi_{k_1}(x_1)\cdot...\cdot
         \varphi_{k_d}(x_d)\,,\quad x\in \R.
    $$
   
    \begin{definition}\label{sprd} \rm Let $t \in \re$, $0<p<\infty$ and $0<q\leq \infty$. Then $S^{t}_{p,q}F(\R)$ is the collection of all
         $f\in S'(\R)$ such that
\beqq
            \|f|S^{t}_{p,q}F(\R)\|  =
            \bigg\|\bigg(\sum\limits_{\bar{k}\in{\N}_0^d}2^{|\bar{k}|tq}
            |\gf^{-1}[\varphi_{\bar{k}}\gf f](\cdot)|^q\bigg)^{1/q}
            \bigg|L_p(\R)\bigg\| <\infty\,.
\eeqq
    \end{definition}

\subsection*{Spaces on $\Omega=(0,1)^d$}

For us it will be convenient to define spaces on $\Omega $ by restrictions.
We shall need the set $D'(\Omega)$, consisting of all complex-valued distributions on $\Omega$.

\begin{definition} \label{defomega}\rm
{\rm (i)} Let $1<p<\infty$ and $t\in \re$. Then $S^t_{p}H(\Omega)$ is the space of all $f\in D'(\Omega)$ such that there exists a distribution $g\in S^t_{p}H(\R)$ satisfying $f = g|_{\Omega}$. It is endowed with the quotient norm
   $$
      \|\, f \, |S^{t}_{p}H(\Omega)\| = \inf \Big\{ \|g|S^{t}_{p}H(\R)\|~:~ g|_{\Omega} =
      f \Big\}\,.
   $$   
{\rm (ii)} Let $ 0< p < \infty$, $0< q\leq \infty$ and $t\in \re$. Then
   $S^{t}_{p,q}F(\Omega)$ is the space of all $f\in D'(\Omega)$ such that there exists a distribution $g\in
   S^{t}_{p,q}F(\R)$ satisfying $f = g|_{\Omega}$. It is endowed with the quotient norm
   $$
      \|\, f \, |S^{t}_{p,q}F(\Omega)\| = \inf \Big\{ \|g|S^{t}_{p,q}F(\R)\|~:~ g|_{\Omega} =
      f \Big\}\,.
   $$   
\end{definition}
We have the following theorem.
\begin{theorem}\label{equal}
Let $1 < p< \infty$ and $t\in \re$. Then
\[
S^t_pH(\R)= S_{p,2}^{t} F (\R) \qquad \mbox{and}\qquad S^t_pH(\Omega)= S_{p,2}^{t} F (\Omega)
\]
in the sense of equivalent norms.
\end{theorem}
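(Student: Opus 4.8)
The plan is to establish the identity on the whole space $\R$ first and then deduce the statement on $\Omega$ from the definition by restriction. For the whole-space case, the starting point is the $d$-parameter (iterated) Littlewood--Paley theorem: for $1<p<\infty$ one has
\[
\|f|L_p(\R)\|\ \asymp\ \Big\|\Big(\sum_{\bar k\in\N_0^d}|\gf^{-1}[\varphi_{\bar k}\gf f]|^2\Big)^{1/2}\Big|L_p(\R)\Big\|,
\]
i.e.\ $S^0_{p,2}F(\R)=L_p(\R)$, which is recorded in \cite[Chapter 2]{ST} and \cite[Chapter 1]{Vy} and follows by iterating the classical one-dimensional Littlewood--Paley inequality over the $d$ coordinate directions, using the Fefferman--Stein vector-valued maximal inequality to handle the mixed structure.

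I would then reduce the general case $t\in\re$ to a uniform Fourier multiplier estimate. Put $J^tf:=\gf^{-1}\big[\prod_{i=1}^d(1+\xi_i^2)^{t/2}\,\gf f\big]$, so that $\|f|S^t_pH(\R)\|=\|J^tf|L_p(\R)\|$ by definition. Pick $\psi_{\bar k}\in C_0^\infty(\R)$ of the same product, dyadically scaled shape as $\varphi_{\bar k}$ with $\psi_{\bar k}\equiv 1$ on $\supp\varphi_{\bar k}$; on $\supp\varphi_{\bar k}$ one has $\prod_{i=1}^d(1+\xi_i^2)^{t/2}\asymp 2^{|\bar k|_1 t}$, and the symbols $2^{|\bar k|_1 t}\varphi_{\bar k}(\xi)\psi_{\bar k}(\xi)\prod_{i=1}^d(1+\xi_i^2)^{-t/2}$ obey Marcinkiewicz/Mikhlin-type bounds that are uniform in $\bar k$ after the anisotropic rescaling $\xi_i\mapsto 2^{k_i}\xi_i$ (the blocks with some $k_i=0$ being a trivial separate check). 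Hence the vector-valued Mikhlin multiplier theorem for $L_p$-spaces of $\ell_2$-valued functions, $1<p<\infty$, gives
\[
\Big\|\Big(\sum_{\bar k}2^{2|\bar k|_1 t}|\gf^{-1}[\varphi_{\bar k}\gf f]|^2\Big)^{1/2}\Big|L_p(\R)\Big\|\ \lesssim\ \Big\|\Big(\sum_{\bar k}|\gf^{-1}[\psi_{\bar k}\gf(J^tf)]|^2\Big)^{1/2}\Big|L_p(\R)\Big\|\ \lesssim\ \|J^tf|L_p(\R)\|,
\]
the last estimate being the iterated Littlewood--Paley inequality for the finitely overlapping system $\{\psi_{\bar k}\}$; this yields $\|f|S^t_{p,2}F(\R)\|\lesssim\|f|S^t_pH(\R)\|$. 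The converse inequality is symmetric, estimating $\gf(J^tf)=\sum_{\bar k}\prod_i(1+\xi_i^2)^{t/2}\varphi_{\bar k}\gf f$ via the reciprocal multipliers $2^{-|\bar k|_1 t}\prod_i(1+\xi_i^2)^{t/2}\varphi_{\bar k}$, which are again uniformly admissible. I expect the only delicate point to be exactly this multiplier bookkeeping, namely checking that all the relevant constants are genuinely uniform in the dyadic block $\bar k$ in the mixed (tensor-product) setting rather than the isotropic one.

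For $\Omega=(0,1)^d$ nothing further is needed: by Definition \ref{defomega} both $S^t_pH(\Omega)$ and $S^t_{p,2}F(\Omega)$ are equipped with the quotient norm coming from restriction of $S^t_pH(\R)$, respectively $S^t_{p,2}F(\R)$, to $\Omega$. Since by the first part these two ambient spaces coincide as sets and carry equivalent norms, the infima defining the two quotient norms on $\Omega$ are comparable, whence $S^t_pH(\Omega)=S^t_{p,2}F(\Omega)$ in the sense of equivalent norms. (Alternatively one may simply quote \cite[Chapter 2]{ST} and \cite{Vy}, where this characterization is stated.)
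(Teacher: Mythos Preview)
Your proposal is correct and follows essentially the same route the paper indicates: the paper does not give a self-contained proof but simply records that the result is \cite[Theorem 2.3.1]{ST}, obtained from the Littlewood--Paley identity $L_p(\R)=S^0_{p,2}F(\R)$ together with the lifting property of the mixed-smoothness scale. Your argument spells out precisely these two ingredients (the iterated Littlewood--Paley inequality for $t=0$ and the multiplier estimate underlying the lifting isomorphism $J^t$), and your passage to $\Omega$ via the quotient norm is the standard one.
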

Theorem \ref{equal} can be found in \cite[Theorem 2.3.1]{ST}. It is a consequence of Littlewood-Paley assertion, i.e., $L_p(\R)= S_{p,2}^{0} F (\R) $, see Nikol'skij \cite[1.5.6]{Ni}, and lifting property of spaces of dominating mixed smoothness, see \cite[2.2.6]{ST}. In the following Subsections and Sections we shall deal with the spaces $S^{t}_{p,2}F(\Omega)$ instead of $S^t_pH(\Omega)$ and $L_p(\Omega)$, respectively.

\subsection{Sequence spaces related to function spaces of dominating mixed smoothness}\label{seq-def}
Let us introduce some sequence spaces. If $\bar{\nu} =(\nu_1, \ldots ,\nu_d) \in \mathbb{N}_0^d$ and $\bar{m}= (m_1, \ldots , m_d)\in \mathbb{Z}^d$, then we put
\[
Q_{\bar{\nu},\bar{m}} := \Big\{x\in \R:\ 2^{-\nu_\ell} \, m_\ell < x_\ell < 2^{-\nu_\ell}\, (m_\ell+1)\, , \: \ell = 1, \, \ldots \, , d\Big\} \, .
\]
By $\chi_{{\bar{\nu},\bar{m}}}(\cdot)$ we denote the characteristic function of $Q_{\bar{\nu},\bar{m}}$. 

\begin{definition}\label{sequence1} If $t\in \mathbb{R}$, $0<p<\infty$,  $0<q\leq \infty$ and
$$\lambda:=\lbrace \lambda_{\bar{\nu},\bar{m}}\in\mathbb{C}:\bar{\nu}\in \mathbb{N}_0^d,\ \bar{m}\in \mathbb{Z}^d \rbrace\, ,$$
then we define 
$$s_{p,q}^tf=\Big\lbrace\lambda: \| \lambda|s_{p,q}^tf\| =
\Big\|\Big(\sum_{\bar{\nu}\in \mathbb{N}_0^d}\sum_{\bar{m}\in \mathbb{Z}^d}|2^{|\bar{\nu}|_1 t}
\lambda_{\bar{\nu},\bar{m}}\chi_{\bar{\nu},\bar{m}}(\cdot) 
|^q\Big)^{\frac{1}{q}}\Big|L_p(\mathbb{R}^d)\Big\|<\infty \Big\rbrace$$
with the usual modification for $q=\infty$.
\end{definition}

Now we recall wavelet bases of Lizorkin$-$Triebel spaces of dominating mixed smoothness.
Let $N \in \N$. Then there exists $\psi_0, \psi_1 \in C^N(\re) $, compactly supported, 
\[
\int_{-\infty}^\infty t^m \, \psi_1 (t)\, dt =0\, , \qquad m=0,1,\ldots \, , N\, , 
\]
such that
$\{ 2^{j/2}\, \psi_{j,m}:\ j \in \N_0, \: m \in \zz\}$, where
\[
\psi_{j,m} (t):= \left\{ \begin{array}{lll}
\psi_0 (t-m) & \qquad & \mbox{if}\quad j=0, \: m \in \zz\, , 
\\
\sqrt{1/2}\, \psi_1 (2^{j-1}t-m) & \qquad & \mbox{if}\quad j\in \N\, , \: m \in \zz\, , 
            \end{array} \right.
\]
is an orthonormal basis in $L_2 (\re)$, see \cite{woj}. 
Consequently, the system
\[
\Psi_{\bar{\nu}, \bar{m}} (x) := \prod_{\ell=1}^d \psi_{\nu_\ell, m_\ell} (x_\ell)\, \qquad \bar{\nu} \in \N_0^d, \, \bar{m} \in \Z\, , 
\]
is a tensor product wavelet basis of $L_2 (\R)$. Vybiral \cite[Theorem 2.12]{Vy} has proved the following.

\begin{lemma}\label{wavelet}
Let $0< p<\infty$\,, $0<q \le\infty$ and $t\in \re$. There exists $N=N(t,p,q) \in \N$ such that the mapping 
\beqq
{\mathcal W}: \quad f \mapsto (2^{|\bar{\nu}|_1}\langle f, \Psi_{\bar{\nu},\bar{m}} \rangle)_{\bar{\nu} \in \N_0^d\, , \, \bar{m} \in \Z} 
\eeqq
is an isomorphism of $S^t_{p,q}F(\R)$ onto $s^t_{p,q}f$.
\end{lemma}

We put
\beqq
A_{\bar{\nu}}^{\Omega}:= 
\Big\{\bar{m}\in \mathbb{Z}^d:\ \supp \Psi_{\bar{\nu},\bar{m}} \cap\Omega \neq \emptyset\Big\}\, ,\qquad \bar{\nu}\in\mathbb{N}_0^d\, .
\eeqq
For given $ f \, \in S_{p,q}^t F(\Omega)$ let $\ce f$ be an element of $ S_{p,q}^t F(\R)$ s.t. 
\[
\| \, \ce f \, | S_{p,q}^t F(\R)\| \le 2 \, \| \, f \, | S_{p,q}^t F(\Omega)\|
\qquad \mbox{and} \qquad (\ce f)_{|_\Omega} = f \, .
\]
We define
\[
g:= \sum_{\bar{\nu} \in \N_0^d} \sum_{\bar{m} \in A_{\bar{\nu}}^{\Omega}} 2^{|\bar{\nu}|_1} \, \langle \ce f, \Psi_{\bar{\nu},\bar{m}} \rangle \, \Psi_{\bar{\nu}, \bar{m}}\, .
\]
Then it follows that $g \in S_{p,q}^t F(\R)$, $g_{|_\Omega} = f $, 
\[
\supp g \subset \{x \in \R: ~ \max_{j=1, \ldots \, , d} |x_j| \le c_1\} \quad \mbox{and} \qquad 
\| \, g \, | S_{p,q}^t F(\R)\| \le c_2 \, \| \, f \, | S_{p,q}^t F(\Omega)\|\, .
\]
Here $c_1,c_2$ are independent of $f$. For this reason we define the following sequence spaces

\begin{definition}
Let $0<p<\infty$, $0< q\leq \infty$ and $t\in \mathbb{R}$.
\begin{enumerate}
\item If 
$$
\lambda=\lbrace \lambda_{\bar{\nu},\bar{m}}\in\mathbb{C}:\bar{\nu}\in \mathbb{N}_0^d,\ \bar{m}\in A_{\bar{\nu}}^{\Omega} \rbrace \, ,
$$
then we define
\[
s_{p,q}^{t,\Omega}f :=\Big\lbrace\lambda: \| \lambda|s_{p,q}^{t,\Omega}f\| =\Big\|\Big(\sum_{\bar{\nu}\in \mathbb{N}_0^d}\sum_{\bar{m}\in A_{\bar{\nu}}^{\Omega}}|2^{|\bar{\nu}|_1 t}\lambda_{\bar{\nu},\bar{m}}
\chi_{\bar{\nu},\bar{m}}(\cdot) |^q\Big)^{\frac{1}{q}}\Big|L_p(\mathbb{R}^d)\Big\|<\infty \Big\rbrace\, .
\]
\item If $\mu\in \N_0$ and 
$$
\lambda=\lbrace \lambda_{\bar{\nu},\bar{m}}\in\mathbb{C}:\bar{\nu}\in \mathbb{N}_0^d,\ |\bar{\nu}|_1=\mu,\ \bar{m}\in A_{\bar{\nu}}^{\Omega} \rbrace \, ,
$$
then we define
$$(s_{p,q}^{t,\Omega}f)_{\mu}=\Big\lbrace\lambda: \| \lambda|(s_{p,q}^{t,\Omega}f)_{\mu}\| =
\Big\|\Big(\sum_{|\bar{\nu}|_1=\mu}\sum_{\bar{m}\in A_{\bar{\nu}}^{\Omega}}|2^{|\bar{\nu}|_1t}\lambda_{\bar{\nu},\bar{m}}\chi_{\bar{\nu},\bar{m}}(\cdot) 
|^q\Big)^{\frac{1}{q}}\Big|L_p(\mathbb{R}^d)\Big\|<\infty \Big\rbrace \, .$$
\end{enumerate}

\end{definition}

Later on we shall need the following lemmas, see \cite{Hansen,KiSi,Vy}.

\begin{lemma}\label{ba1}
\begin{enumerate}
\item Let $\bar{\nu}\in \mathbb{N}_0^d$ and $\mu\in \mathbb{N}_0$. Then we have
$$ \#(A_{\bar{\nu}}^{\Omega})\asymp 2^{|\bar{\nu}|_1}\qquad\text{and}\qquad D_{\mu}=\sum_{|\bar{\nu}|_1=\mu}\#(A_{\bar{\nu}}^{\Omega})\asymp \mu^{d-1}2^{\mu}. \ $$
The equivalence constants do not depend on $\mu\in \mathbb{N}_0$.
\item Let $0<p< \infty$ and $t\in \mathbb{R}$. Then
$$(s_{p,p}^{t,\Omega}f)_{\mu}=2^{\mu(t-\frac{1}{p})}\ell_p^{D_{\mu}}\,,\qquad \mu\in \mathbb{N}_0\,.$$
\end{enumerate}
\end{lemma}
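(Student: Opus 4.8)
The plan is to establish the two parts separately: part (i) by a support‑counting argument for the tensor‑product wavelets $\Psi_{\bar{\nu},\bar{m}}$, and part (ii) by an elementary direct computation that exploits the fact that the inner exponent $q$ equals the outer exponent $p$.

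\textbf{Part (i).} By construction each univariate wavelet $\psi_{\nu_\ell,m_\ell}$ is compactly supported: $\supp\psi_{0,m}\subset[m-c_0,m+c_0]$, while for $\nu_\ell\ge 1$ the set $\supp\psi_{\nu_\ell,m_\ell}$ is an interval of length $\asymp 2^{-\nu_\ell}$ whose location is governed by $2^{-\nu_\ell+1}m_\ell$; here $c_0$ and the implied constants depend only on $\supp\psi_0$ and $\supp\psi_1$, hence only on the fixed order $N$. Since $\supp\Psi_{\bar{\nu},\bar{m}}=\prod_{\ell=1}^{d}\supp\psi_{\nu_\ell,m_\ell}$, this support meets $\Omega=(0,1)^d$ if and only if $\supp\psi_{\nu_\ell,m_\ell}\cap(0,1)\neq\emptyset$ for every $\ell$. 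For fixed $\nu_\ell$, the number of $m_\ell\in\zz$ with this property is $\asymp 2^{\nu_\ell}$ — roughly $2^{\nu_\ell}$ translates are needed to cover $(0,1)$, together with a bounded number straddling the two endpoints — and the constants here do not depend on $\nu_\ell$. Multiplying over $\ell=1,\ldots,d$ gives $\#(A_{\bar{\nu}}^{\Omega})\asymp\prod_{\ell=1}^{d}2^{\nu_\ell}=2^{|\bar{\nu}|_1}$. Finally, the number of $\bar{\nu}\in\N_0^d$ with $|\bar{\nu}|_1=\mu$ is $\binom{\mu+d-1}{d-1}$, which is $\asymp\mu^{d-1}$ for $\mu\ge 1$ (and equals $1$ for $\mu=0$), so summing yields $D_\mu=\sum_{|\bar{\nu}|_1=\mu}\#(A_{\bar{\nu}}^{\Omega})\asymp 2^{\mu}\mu^{d-1}$.

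\textbf{Part (ii).} Here I would simply evaluate the $(s^{t,\Omega}_{p,p}f)_\mu$-norm. Because $q=p$, raising the norm to the $p$th power, using $|\chi_{\bar{\nu},\bar{m}}|^p=\chi_{\bar{\nu},\bar{m}}$, and interchanging sum and integral gives
\[
\|\lambda\,|\,(s^{t,\Omega}_{p,p}f)_\mu\|^{p}=2^{\mu t p}\sum_{|\bar{\nu}|_1=\mu}\ \sum_{\bar{m}\in A_{\bar{\nu}}^{\Omega}}|\lambda_{\bar{\nu},\bar{m}}|^{p}\,|Q_{\bar{\nu},\bar{m}}|\,;
\]
the overlaps among the cubes $Q_{\bar{\nu},\bar{m}}$ for distinct $\bar{\nu}$ with $|\bar{\nu}|_1=\mu$ are irrelevant, since only additivity of the Lebesgue integral is used. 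As $|Q_{\bar{\nu},\bar{m}}|=\prod_{\ell=1}^{d}2^{-\nu_\ell}=2^{-|\bar{\nu}|_1}=2^{-\mu}$, the right‑hand side equals $2^{\mu(tp-1)}\sum|\lambda_{\bar{\nu},\bar{m}}|^p$, where the sum runs over the $D_\mu$ admissible pairs $(\bar{\nu},\bar{m})$. Fixing an enumeration of these pairs identifies $\lambda$ with a vector in $\mathbb{C}^{D_\mu}$, and the displayed identity becomes $\|\lambda\,|\,(s^{t,\Omega}_{p,p}f)_\mu\|=2^{\mu(t-1/p)}\|\lambda\,|\,\ell_p^{D_\mu}\|$, that is, $(s^{t,\Omega}_{p,p}f)_\mu=2^{\mu(t-1/p)}\ell_p^{D_\mu}$ isometrically.

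The only mildly delicate point is the boundary count in part (i): one must check that translating the fixed, finitely supported profiles $\psi_0$ (along $\zz$) and $\psi_1$ (along $2^{-\nu_\ell+1}\zz$) produces exactly $\asymp 2^{\nu_\ell}$ indices whose support meets the open interval $(0,1)$, with constants independent of $\nu_\ell$ — this is precisely where the uniform bound on the support lengths of $\psi_0$ and $\psi_1$ enters. Everything else is routine bookkeeping, and in any case the assertion is classical; cf.\ \cite{Hansen,KiSi,Vy}.
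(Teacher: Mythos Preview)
Your argument is correct in both parts. The paper does not actually prove this lemma; it merely states it and refers to \cite{Hansen,KiSi,Vy}, and your support-counting for part (i) together with the direct norm computation for part (ii) is precisely the standard verification one finds in those references.
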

\begin{lemma}\label{ba2}
\begin{enumerate}
\item Let $0<p_1,p_2<\infty$, $0<q_1,q_2\leq \infty$ and $t\in \mathbb{R}$. Then
\[
\| \, id^*_{\mu} \, : (s^{t,\Omega}_{p_1,q_1}f)_\mu \to (s^{0,\Omega}_{p_2,q_2}f)_\mu\|
\lesssim 2^{\mu\big(-t+(\frac{1}{p_1}-\frac{1}{p_2})_+\big)}\mu^{(d-1)(\frac{1}{q_2} - \frac{1}{q_1})_+}
\]
with a constant behind $\lesssim$ independent of $\mu\in \mathbb{N}_0$.
\item Let $0<p_1<p_2<\infty$, $0<q_1,q_2\leq \infty$ and $t \in \re$. Then
$$ \|id^*_{\mu}: (s_{p_1,q_1}^{t,\Omega}f)_{\mu}\to (s_{p_2,q_2}^{0,\Omega}f)_{\mu} \| \lesssim 2^{\mu(-t+\frac{1}{p_1}-\frac{1}{p_2})}$$
with a constant behind $\lesssim$ independent of $\mu\in \mathbb{N}_0$.
\end{enumerate}
\end{lemma}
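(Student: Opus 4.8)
Throughout, the plan is to reduce both estimates to the case $t=0$, to regard the level-$\mu$ sequence space as a mixed-norm space of step functions, and then to split $id^*_\mu$ into a change of integrability index followed by a change of summability index.

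\emph{Step 1 (reduction to $t=0$, and setup).} Every $\bar\nu$ occurring in $(s^{t,\Omega}_{p,q}f)_\mu$ has $|\bar\nu|_1=\mu$, so the weight $2^{|\bar\nu|_1 t}$ is the constant $2^{\mu t}$ and $\|\lambda|(s^{t,\Omega}_{p,q}f)_\mu\|=2^{\mu t}\|\lambda|(s^{0,\Omega}_{p,q}f)_\mu\|$; consequently $\|id^*_\mu:(s^{t,\Omega}_{p_1,q_1}f)_\mu\to(s^{0,\Omega}_{p_2,q_2}f)_\mu\|=2^{-\mu t}\,\|id^*_\mu:(s^{0,\Omega}_{p_1,q_1}f)_\mu\to(s^{0,\Omega}_{p_2,q_2}f)_\mu\|$, and I may take $t=0$. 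Writing $h_{\bar\nu}:=\sum_{\bar m\in A^\Omega_{\bar\nu}}|\lambda_{\bar\nu,\bar m}|\chi_{\bar\nu,\bar m}$, one has $\|\lambda|(s^{0,\Omega}_{p,q}f)_\mu\|=\|(h_{\bar\nu})_{|\bar\nu|_1=\mu}\,|L_p(\ell_q)\|$, where each $h_{\bar\nu}$ is a step function, constant and equal to $|\lambda_{\bar\nu,\bar m}|$ on the cube $Q_{\bar\nu,\bar m}$, each such cube having volume $2^{-|\bar\nu|_1}=2^{-\mu}$ and being contained in a fixed box $\{x:\max_j|x_j|\le c_1\}$ with $c_1$ independent of $\mu$ (this is the containment recalled just before these sequence spaces were introduced).

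\emph{Step 2 (the case $p_1\ge p_2$; this settles (i) in that range).} Factor $id^*_\mu$ as $(s^{0,\Omega}_{p_1,q_1}f)_\mu\to(s^{0,\Omega}_{p_2,q_1}f)_\mu\to(s^{0,\Omega}_{p_2,q_2}f)_\mu$. For the first map, $x\mapsto\|(h_{\bar\nu}(x))_{\bar\nu}\|_{\ell_{q_1}}$ is supported in the fixed box, so Hölder's inequality in $x$ gives $\|\cdot|L_{p_2}\|\lesssim\|\cdot|L_{p_1}\|$, i.e.\ norm $\lesssim 1=2^{\mu(1/p_1-1/p_2)_+}$. For the second map: if $q_1\le q_2$ the pointwise inequality $\|\cdot\|_{\ell_{q_2}}\le\|\cdot\|_{\ell_{q_1}}$ gives norm $\le1$; if $q_1>q_2$, for each $x$ the vector $(h_{\bar\nu}(x))_{|\bar\nu|_1=\mu}$ has at most $\asymp\mu^{d-1}$ nonzero coordinates (the number of $\bar\nu$ with $|\bar\nu|_1=\mu$, cf.\ Lemma \ref{ba1}(i)), so the embedding $\ell_{q_2}^m\hookrightarrow\ell_{q_1}^m$ with norm $m^{1/q_2-1/q_1}$, applied pointwise with $m\asymp\mu^{d-1}$ and then integrated in $L_{p_2}$, gives norm $\lesssim\mu^{(d-1)(1/q_2-1/q_1)}$. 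The product of the two factors is exactly the bound claimed in (i).

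\emph{Step 3 (the case $p_1<p_2$; this is (ii), and (i) for $p_1<p_2$ then follows since $2^{\mu(1/p_1-1/p_2)}\le2^{\mu(1/p_1-1/p_2)}\mu^{(d-1)(1/q_2-1/q_1)_+}$).} The elementary input is a Nikol'skij inequality for a single block: since $h_{\bar\nu}$ is a step function on cubes of volume $2^{-\mu}$,
$$\|h_{\bar\nu}|L_{p_2}\|=2^{-\mu/p_2}\|(\lambda_{\bar\nu,\bar m})_{\bar m}\|_{\ell_{p_2}}\le2^{-\mu/p_2}\|(\lambda_{\bar\nu,\bar m})_{\bar m}\|_{\ell_{p_1}}=2^{\mu(1/p_1-1/p_2)}\,\|h_{\bar\nu}|L_{p_1}\|.$$
What must be shown is the analogous inequality for the whole family, $\|(h_{\bar\nu})|L_{p_2}(\ell_{q_2})\|\lesssim2^{\mu(1/p_1-1/p_2)}\|(h_{\bar\nu})|L_{p_1}(\ell_{q_1})\|$, with a constant free of $\mu$. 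Since $\|\cdot\|_{\ell_q}$ decreases in $q$, the source norm only decreases when $q_1$ is replaced by $\infty$, so it suffices to bound $\|(h_{\bar\nu})|L_{p_2}(\ell_{q_2})\|$ by $2^{\mu(1/p_1-1/p_2)}\|\sup_{\bar\nu}h_{\bar\nu}|L_{p_1}\|$. When $q_2\ge\min(p_2,q_1)$ one can reach this by a chain of Minkowski's inequality (in $L_{p_2/q_2}$, respectively in $L_{p_2}(\ell_{q_2})$), the single-block inequality above, and the monotonicity relations between the mixed $\ell$-norms and the identities $\|h_{\bar\nu}|L_p\|=2^{-\mu/p}\|(\lambda_{\bar\nu,\bar m})_{\bar m}\|_{\ell_p}$. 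The difficulty is the complementary range $q_2<\min(p_2,q_1)$: there this chain is genuinely lossy — Minkowski's inequality is far from sharp precisely when the blocks $h_{\bar\nu}$ have nearly disjoint supports — and it produces a spurious power of $\mu$ which (i) could absorb only into its logarithmic factor and which (ii) cannot afford at all. Removing this loss is the step I expect to be the main obstacle; I would attack it directly, using that all dyadic rectangles occurring at level $\mu$ sit at the single spatial scale $2^{-\mu}$: bound $\sum_{\bar\nu}h_{\bar\nu}^{q_2}$ pointwise in terms of $\sup_{\bar\nu}h_{\bar\nu}$ together with the local step-function structure, and estimate the resulting integral at that scale, so that the block inequality applies without ever counting the $\asymp\mu^{d-1}$ blocks. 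For the technical execution of this last comparison I would follow the arguments in Vybiral \cite{Vy} and Hansen \cite{Hansen}.
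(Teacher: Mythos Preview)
The paper does not give its own proof of this lemma; it states it with a reference to \cite{Hansen,KiSi,Vy}. So there is no in-paper argument to compare against, and your deferral to those sources in Step~3 is exactly what the paper itself does. Your Steps~1 and~2 go further than the paper and are correct: the reduction to $t=0$ is immediate, and for $p_1\ge p_2$ the two-step factorisation through $(s^{0,\Omega}_{p_2,q_1}f)_\mu$ gives precisely the bound in~(i). (Minor slip: in Step~2 you wrote ``the embedding $\ell_{q_2}^m\hookrightarrow\ell_{q_1}^m$'' where you meant $\ell_{q_1}^m\to\ell_{q_2}^m$; the norm you quote is the right one.)

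Step~3, however, has a genuine gap in the ``easy'' subcase. The Minkowski swap $\|(h_{\bar\nu})|L_{p_2}(\ell_{q_2})\|\le\|(h_{\bar\nu})|\ell_{q_2}(L_{p_2})\|$ holds when $q_2\le p_2$, not when $q_2\ge p_2$ (the mixed norm with the \emph{smaller} outer exponent is the larger one), so your stated range $q_2\ge\min(p_2,q_1)$ is off. More seriously, even after a correct swap plus the single-block Nikol'skij bound you land in $\ell_{q_2}(L_{p_1})$, and getting from there back to $L_{p_1}(\ell_{q_1})$ without a $\mu^{d-1}$-loss is again nontrivial: the function $x\mapsto\|(h_{\bar\nu}(x))\|_{\ell_q}$ is \emph{not} a step function at a single dyadic scale (the rectangles $Q_{\bar\nu,\bar m}$ for different $\bar\nu$ with $|\bar\nu|_1=\mu$ live on genuinely different grids, and their common refinement has volume $2^{-d\mu}$, not $2^{-\mu}$), so the scalar Nikol'skij inequality does not transfer to it. In short, essentially all of~(ii) --- not just the range $q_2<\min(p_2,q_1)$ --- requires the Jawerth--Franke type argument from \cite{Vy,Hansen}; your sketch does not carve out a self-contained easy subcase. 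Since you already point to those references for the crux, the proposal is acceptable as a reference-supported statement, matching the paper, but the intermediate case analysis in Step~3 should be dropped or corrected.
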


\section{Weyl and Bernstein numbers of embeddings of sequence spaces}\label{sec-seq}
\subsection{Some preparations}
We define the operators
 $$ id_{\mu}^* :\ s_{p_1,2}^{t,\Omega}f\to s_{p_2,2}^{0,\Omega}f $$
 where 
 $$
 (id_{\mu}^*\lambda )_{\bar{\nu},\bar{m}}=\begin{cases}
 \lambda_{\bar{\nu},\bar{m}}&\text{if}\ |\bar{\nu}|=\mu\,,\\
 0&\text{otherwise}\,.
 \end{cases}
 $$ 
 We split the identity $ id^*: s_{p_1,2}^{t,\Omega}f\to s_{p_2,2}^{0,\Omega}f $ into the sum of identities between building blocks
 $$id^*=\sum_{\mu=0}^{J}id_{\mu}^*+\sum_{\mu=J+1}^{L}id_{\mu}^*+\sum_{\mu=L+1}^{\infty}id_{\mu}^*,\ \ J<L\,.$$
 We will show how to choose $L$ and $J$ later. By the properties of Weyl numbers we have
 $$x_n(id^*)\leq \sum_{\mu=0}^{J}x_{n_{\mu}}(id_{\mu}^*)+\sum_{\mu=J+1}^{L}x_{n_{\mu}}(id_{\mu}^*)+\sum_{\mu=L+1}^{\infty} \|id_{\mu}^*\|$$
 where $n-1=\sum_{\mu=0}^{L}(n_{\mu}-1)$.
Lemma \ref{ba2} results in
 $$ \|id_{\mu}^*  \|\lesssim 2^{-\mu\big(t-(\frac{1}{p_1}-\frac{1}{p_2})_+\big)}, $$
 which implies 
 $$\sum_{\mu=L+1}^{\infty} \|id_{\mu}^*\|\lesssim  2^{-L \big(t-(\frac{1}{p_1}-\frac{1}{p_2})_+\big)}\,,$$
Next  we define $n_{\mu}$ as follows
 $$n_{\mu}=D_{\mu}+1,\ \ \mu=0,1,....,J\, .$$
 Then we get
 $$\sum_{\mu=0}^{J}n_{\mu}\asymp J^{d-1}2^J \qquad\text{and}\qquad  \sum_{\mu=0}^{J}x_{n_{\mu}}(id_{\mu}^*)=0.$$
 Summing up we have proved
 \begin{equation}\label{sum}
 x_n(id^*)\lesssim \sum_{\mu=J+1}^{L}x_{n_{\mu}}(id_{\mu}^*)+ 2^{-L \big(t-(\frac{1}{p_1}-\frac{1}{p_2})_+\big)} \, .
 \end{equation}
 To estimate the lower bound we use the following lemma. Recall, $\omega_n$ denotes either $x_n$ or $b_n$.
 
  \begin{lemma}\label{lowa}
  Let $1\leq p_1,p_2<\infty$. For all $\mu \in \N_0$ and $n \in \N$ we have 
  \beqq
  \omega_n\big(id_{\mu}^* :\ (s^{t,\Omega}_{p_1,2}f)_{\mu}\to  (s^{0,\Omega}_{p_2,2}f)_{\mu}\big)\leq \omega_n\big(id^* :\ s^{t,\Omega}_{p_1,2}f \to s^{0,\Omega}_{p_2,2}f \big) \, .
\eeqq
  \end{lemma}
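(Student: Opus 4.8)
The plan is to realize $(s^{t,\Omega}_{p_1,2}f)_{\mu}$ and $(s^{0,\Omega}_{p_2,2}f)_{\mu}$ as $1$-complemented subspaces of $s^{t,\Omega}_{p_1,2}f$ and $s^{0,\Omega}_{p_2,2}f$, respectively, so that $id^*_{\mu}$ factors through $id^*$ via norm-one maps, and then to invoke the corresponding factorization properties: property (c) of $s$-numbers for the Weyl numbers $x_n$, and the analogous property (c) for Bernstein numbers (which, as noted in Section \ref{sec-pro}, holds even though $b_n$ is not an $s$-number). First I would introduce the canonical injection $E_\mu$ that sends a finite sequence $\lambda = (\lambda_{\bar\nu,\bar m})_{|\bar\nu|_1 = \mu,\ \bar m \in A^{\Omega}_{\bar\nu}}$ to the element of $s^{t,\Omega}_{p_1,2}f$ having the same entries on the block $|\bar\nu|_1 = \mu$ and zero elsewhere; by the very definition of the $s^{t,\Omega}_{p,q}f$-norm (a single sum over all $\bar\nu$), one has $\|E_\mu \lambda \mid s^{t,\Omega}_{p_1,2}f\| = \|\lambda \mid (s^{t,\Omega}_{p_1,2}f)_\mu\|$, so $\|E_\mu\| = 1$. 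Symmetrically, I would introduce the coordinate projection $P_\mu$ on $s^{0,\Omega}_{p_2,2}f$ that keeps the entries with $|\bar\nu|_1 = \mu$ and annihilates all others; again directly from the norm definition, $\|P_\mu\| \le 1$, and its corestriction maps onto $(s^{0,\Omega}_{p_2,2}f)_\mu$ isometrically.

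The key identity is then the factorization
\[
id^*_{\mu} : (s^{t,\Omega}_{p_1,2}f)_{\mu} \xrightarrow{\ E_\mu\ } s^{t,\Omega}_{p_1,2}f \xrightarrow{\ id^*\ } s^{0,\Omega}_{p_2,2}f \xrightarrow{\ P_\mu\ } (s^{0,\Omega}_{p_2,2}f)_{\mu}\,,
\]
which holds because $id^*$ acts as the identity on coordinates and $P_\mu \circ id^* \circ E_\mu$ simply copies the block $|\bar\nu|_1 = \mu$. Applying property (c) for $x_n$ gives $x_n(id^*_\mu) \le \|P_\mu\|\, x_n(id^*)\, \|E_\mu\| \le x_n(id^*)$, and the same estimate for $b_n$ follows from the monotonicity/ideal property of Bernstein numbers under composition with bounded operators on both sides (this is exactly the part of the $s$-number axioms that $b_n$ does retain, cf.\ the discussion around (b$'$) in Section \ref{sec-pro}, since for $b_n$ one has $b_n(BTA) \le \|B\|\, b_n(T)\, \|A\|$). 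Since $\omega_n$ stands for either $x_n$ or $b_n$, both cases are covered simultaneously and the lemma follows.

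The only mild subtlety—hardly an obstacle—is to make sure that $E_\mu$ and $P_\mu$ land in the right spaces: $E_\mu$ produces a sequence supported on finitely many blocks, hence trivially in $s^{t,\Omega}_{p_1,2}f$, and $P_\mu$ never increases the norm because removing blocks from the $\ell_2$-sum over $\bar\nu$ inside the $L_p$-quasi-norm can only decrease it. One should also note that $(s^{t,\Omega}_{p_1,2}f)_\mu$ is finite-dimensional (its dimension is $D_\mu \asymp \mu^{d-1}2^\mu$ by Lemma \ref{ba1}), so there are no convergence issues and the supremum defining $b_n$ is over a genuinely nonempty family of subspaces whenever $n \le D_\mu$ (for $n > D_\mu$ both sides vanish in the $b_n$ case and the inequality is trivial). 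Thus the proof reduces to writing down $E_\mu$, $P_\mu$, checking $\|E_\mu\| = \|P_\mu\| = 1$ from the norm definitions, and quoting the factorization estimates for $x_n$ and $b_n$.
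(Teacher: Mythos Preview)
Your proof is correct and follows essentially the same route as the paper's: the paper also factors $id_{\mu}^*$ as the composition of the canonical embedding $(s^{t,\Omega}_{p_1,2}f)_\mu \hookrightarrow s^{t,\Omega}_{p_1,2}f$, the full identity $id^*$, and the canonical projection $s^{0,\Omega}_{p_2,2}f \to (s^{0,\Omega}_{p_2,2}f)_\mu$, and then applies property~(c) (valid for both $x_n$ and $b_n$) with the embedding and projection having norm~$1$. One tiny slip: for $n>D_\mu$ only the left-hand side vanishes, not both---but since the right-hand side is nonnegative the inequality is still trivial there.
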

  \begin{proof}
 The proof is carried out as in \cite[Lemma 6.10]{KiSi}. We consider the following diagram
 \beqq
 \begin{CD}
 s^{t,\Omega}_{p_1,2}f @ > id^* >> s^{0,\Omega}_{p_2,2}f\\
 @A id^1 AA @VV id^2 V\\
 (s^{t,\Omega}_{p_1,2}f)_\mu @ > id_{\mu}^* >> (s^{0,\Omega}_{p,2}f)_\mu \, .
 \end{CD}
 \eeqq
 Here $id^1$ is the canonical embedding and $id^2$ is the canonical projection.
 Since $id_{\mu}^* = id^2 \circ id^* \circ id^1$ the property (c) in the definition of $s-$numbers yields
 $$
 \omega_n(id_{\mu}^*) \leq \| \, id^1\, \| \cdot \|\, id^2 \, \| \cdot \omega_n(id^*) = \omega_n(id^*)\, .
 $$
 This completes the proof.
  \end{proof}
 \subsection{Weyl and Bernstein numbers of embeddings $id_{\mu}^* :\ (s^{t,\Omega}_{p_1,2}f)_{\mu}\to  (s^{0,\Omega}_{p_2,2}f)_{\mu}$}
 The following lemma holds for both, Weyl and Bernstein numbers, since they share property (c) in the definition of $s-$numbers.
 \begin{lemma}\label{ba4}Let $t\in \re$ and $1\leq p_1,p_2<\infty$.
 \begin{enumerate}
 \item If $1\leq p_1 \leq 2$, then we have
 \be\label{ba4-1}\mu^{-(d-1)(\frac{1}{p_2}-\frac{1}{2})_+}2^{\mu( -t+\frac{1}{2} -\frac{1}{p_2})}\omega_n (id_{2,p_2}^{D_{\mu}})  \lesssim \omega_n(id_{\mu}^*).
 \ee
 \item  If $\epsilon>0$ such that $p_1-\epsilon>0$, then
 \be\label{ba4-2}
\mu^{-(d-1)(\frac{1}{p_2}-\frac{1}{2})_+} 2^{\mu(-t+\frac{1}{p_1}-\frac{1}{p_2})}\omega_n(id_{p_1-\epsilon,p_2}^{D_{\mu}})\lesssim \omega_n(id_{\mu}^*).
 \ee
 \item If  $2\leq p_2<\infty$, then
 \be\label{ba4-3}\omega_n(id_{\mu}^*)\lesssim \mu^{(d-1)(\frac{1}{p_1}-\frac{1}{2})_+}2^{\mu(-t+\frac{1}{p_1}-\frac{1}{p_2})}\omega_n(id_{p_1,2}^{D_{\mu}}).
 \ee
 \end{enumerate}
 \end{lemma}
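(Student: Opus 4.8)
The plan is to deduce all three estimates from the factorisation property (c) of $\omega_n$ (the one shared by $x_n$ and $b_n$), $\omega_n(BTA)\le\|B\|\,\omega_n(T)\,\|A\|$, by sandwiching a finite-dimensional identity $id^{D_{\mu}}_{r,p_2}$ between copies of $id_{\mu}^*$ for (i)--(ii), and conversely $id_{\mu}^*$ between copies of $id^{D_{\mu}}_{p_1,2}$ for (iii). Two structural facts make this purely a matter of bookkeeping. First, every $\bar{\nu}$ in level $\mu$ satisfies $|\bar{\nu}|_1=\mu$, so the weight $2^{|\bar{\nu}|_1t}=2^{\mu t}$ is constant on the block; hence $(s^{t,\Omega}_{p,q}f)_{\mu}$ and $(s^{0,\Omega}_{p,q}f)_{\mu}$ share the same $D_{\mu}$-dimensional coordinate space and $\|\,\cdot\,|(s^{t,\Omega}_{p,q}f)_{\mu}\|=2^{\mu t}\,\|\,\cdot\,|(s^{0,\Omega}_{p,q}f)_{\mu}\|$. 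Second, Lemma \ref{ba1}(ii) reads $(s^{0,\Omega}_{p,p}f)_{\mu}=2^{-\mu/p}\ell_p^{D_{\mu}}$, so the coordinate identity $\ell_p^{D_{\mu}}\to(s^{0,\Omega}_{p,p}f)_{\mu}$ has norm $2^{-\mu/p}$ and its inverse has norm $2^{\mu/p}$. Feeding these two facts into Lemma \ref{ba2} (applied, with $t=0$, to the coordinate identities between block spaces of coinciding or ordered integrability and possibly different fine index $q$) gives, with constants independent of $\mu$,
\begin{align*}
&\|id:\ell_2^{D_{\mu}}\to (s^{t,\Omega}_{p_1,2}f)_{\mu}\|\lesssim 2^{\mu(t-1/2)}\quad(p_1\le 2),\qquad
\|id:\ell_{p_1-\epsilon}^{D_{\mu}}\to (s^{t,\Omega}_{p_1,2}f)_{\mu}\|\lesssim 2^{\mu(t-1/p_1)},\\
&\|id:(s^{0,\Omega}_{p_2,2}f)_{\mu}\to\ell_{p_2}^{D_{\mu}}\|\lesssim 2^{\mu/p_2}\,\mu^{(d-1)(1/p_2-1/2)_+},\qquad
\|id:\ell_2^{D_{\mu}}\to(s^{0,\Omega}_{p_2,2}f)_{\mu}\|\lesssim 2^{-\mu/p_2}\quad(p_2\ge2),\\
&\|id:(s^{t,\Omega}_{p_1,2}f)_{\mu}\to\ell_{p_1}^{D_{\mu}}\|\lesssim 2^{\mu(-t+1/p_1)}\,\mu^{(d-1)(1/p_1-1/2)_+}.
\end{align*}
(The first is also immediate by hand: $\|\,\cdot\,|(s^{t,\Omega}_{p_1,2}f)_{\mu}\|=2^{\mu t}\|(\sum|\lambda_{\bar{\nu},\bar{m}}|^2\chi_{\bar{\nu},\bar{m}})^{1/2}\,|L_{p_1}\|$, and since $p_1\le 2$ and all supports lie in a fixed bounded set, Hölder replaces $L_{p_1}$ by $L_2$, whose norm is $2^{-\mu/2}\|\lambda\,|\ell_2^{D_{\mu}}\|$.)

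For the lower bounds (i) and (ii) I would take $r=2$, respectively $r=p_1-\epsilon$, and write
\[
id^{D_{\mu}}_{r,p_2}=B\circ id_{\mu}^*\circ A,\qquad A:\ \ell_r^{D_{\mu}}\to (s^{t,\Omega}_{p_1,2}f)_{\mu},\quad B:\ (s^{0,\Omega}_{p_2,2}f)_{\mu}\to\ell_{p_2}^{D_{\mu}},
\]
with $A,B$ the coordinate identities: this is exact, with no spurious scalar, because $id_{\mu}^*$ acts as the identity on the common index set $\{(\bar{\nu},\bar{m}):|\bar{\nu}|_1=\mu,\ \bar{m}\in A^{\Omega}_{\bar{\nu}}\}$ and all the scalings have been kept inside the operator norms, not inside the maps. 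Property (c) then yields $\omega_n(id^{D_{\mu}}_{r,p_2})\le\|A\|\,\|B\|\,\omega_n(id_{\mu}^*)$, and dividing through — using the first (resp. second) and third displayed bounds — gives precisely \eqref{ba4-1} and \eqref{ba4-2}; the factor $\mu^{-(d-1)(1/p_2-1/2)_+}$ there is exactly the reciprocal of the $\mu$-power in $\|B\|$. For the upper bound (iii), with $2\le p_2<\infty$, I would factor in the opposite order,
\[
id_{\mu}^*=C\circ id^{D_{\mu}}_{p_1,2}\circ D,\qquad D:\ (s^{t,\Omega}_{p_1,2}f)_{\mu}\to\ell_{p_1}^{D_{\mu}},\quad C:\ \ell_2^{D_{\mu}}\to(s^{0,\Omega}_{p_2,2}f)_{\mu},
\]
again with coordinate identities; property (c) gives $\omega_n(id_{\mu}^*)\le\|C\|\,\|D\|\,\omega_n(id^{D_{\mu}}_{p_1,2})$, and the fourth and fifth displayed bounds give $\|C\|\,\|D\|\lesssim 2^{\mu(-t+1/p_1-1/p_2)}\mu^{(d-1)(1/p_1-1/2)_+}$, which is \eqref{ba4-3}.

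I do not anticipate a genuine obstacle; the content is entirely in the two structural identities for $(s^{t,\Omega}_{p,q}f)_{\mu}$ together with Lemma \ref{ba2}. The points deserving care are: (a) that each composite is literally the claimed finite-dimensional identity — ensured by placing all scalings inside the norms as above; (b) the correct instance of Lemma \ref{ba2} — part (ii), with the strict inequality $p_1-\epsilon<p_1$, for the second displayed norm bound, and part (i) with coinciding integrability for the purely $q$-related comparisons; and (c) uniformity of the resulting constants in $\mu$, which is inherited verbatim from Lemmas \ref{ba1} and \ref{ba2}.
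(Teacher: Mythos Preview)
Your proposal is correct and is essentially the same argument as the paper's. The only cosmetic difference is that the paper writes the commutative diagrams entirely in terms of the block spaces $(s^{t,\Omega}_{p,q}f)_{\mu}$ (with corners $(s^{t,\Omega}_{2,2}f)_{\mu}$, $(s^{0,\Omega}_{p_1-\epsilon,p_1-\epsilon}f)_{\mu}$, $(s^{t,\Omega}_{p_1,p_1}f)_{\mu}$, $(s^{0,\Omega}_{p_2,p_2}f)_{\mu}$) and then invokes Lemma~\ref{ba1}(ii) to convert $\omega_n$ of the diagonal map into $\omega_n(id^{D_{\mu}}_{r,p_2})$ or $\omega_n(id^{D_{\mu}}_{p_1,2})$, whereas you absorb that identification into the operator norms of $A,B,C,D$ from the outset; the resulting estimates and constants are identical.
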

 \begin{proof}
 {\it Step 1.} Proof of (i). We consider the following diagram
 \beqq
 \begin{CD}
 (s^{t,\Omega}_{p_1,2}f)_\mu  @ > id_{\mu}^* >> (s^{0,\Omega}_{p_2,2}f)_\mu \\
 @A id_1 AA @VV id_3 V\\
 (s^{t,\Omega}_{2,2}f)_\mu @ > id_{2} >> (s^{0,\Omega}_{p_2,p_2}f)_\mu \, .
 \end{CD}
 \eeqq
and obtain
\be\label{ba4-01}
\omega_n(id_{2} )\ \lesssim\ \| id_1 \|\cdot \|id_3\|\cdot\omega_n(id_{\mu}^*).
\ee
 By Lemma \ref{ba2} (i), Lemma \ref{ba1} (ii) we have
 $$\|  id_1\|\lesssim  1\,,\qquad \|  id_3\|\lesssim  \mu^{(d-1)(\frac{1}{p_2}-\frac{1}{2})_+}$$
 and
 $$\omega_n(id_2)\asymp 2^{\mu(-t+\frac{1}{2}-\frac{1}{p_2})}\omega_n(id_{2,p_2}^{D_{\mu}})\,.$$
 This together with \eqref{ba4-01} results in \eqref{ba4-1}.\\
 {\it Step 2.} Proof of (ii). We consider the following diagram
 \beqq
 \begin{CD}
 (s^{t,\Omega}_{p_1,2}f)_\mu  @ > id_{\mu}^* >> (s^{0,\Omega}_{p_2,2}f)_\mu \\
 @A id_1 AA @VV id_3 V\\
 (s^{0,\Omega}_{p_1-\epsilon,p_1-\epsilon}f)_\mu @ > id_{2} >> (s^{0,\Omega}_{p_2,p_2}f)_\mu \, .
 \end{CD}
 \eeqq
Property (c) yields
 $$\omega_n(id_{2} )\ \leq\ \| id_1 \|\cdot\|id_3\|\cdot \omega_n(id_{\mu}^*).$$
This together with
 $$ \|  id_1\|\lesssim 2^{\mu(t+\frac{1}{p_1-\epsilon}-\frac{1}{p_1})}\, ,\qquad \|  id_3\|\lesssim \mu^{(d-1)(\frac{1}{p_2}-\frac{1}{2})_+},$$
see Lemma \ref{ba2}, and
 $$\omega_n(id_2)\asymp 2^{\mu(-\frac{1}{p_2}+\frac{1}{p_1-\epsilon})}\omega_n(id_{p_1-\epsilon,p_2}^{D_{\mu}}),$$
 see Lemma \ref{ba1}, claims the estimate.\\
 {\it Step 3.} Proof of (iii). This time we consider the following diagram
 \beqq
 \begin{CD}
 (s^{t,\Omega}_{p_1,p_1}f)_\mu  @ > id_2 >> (s^{0,\Omega}_{2,2}f)_\mu \\
 @A id_1 AA @VV id_3 V\\
 (s^{t,\Omega}_{p_1,2}f)_\mu @ > id_{\mu}^* >> (s^{0,\Omega}_{p_2,2}f)_\mu \, 
 \end{CD}
 \eeqq
and obtain
\be\label{ba41}
\omega_n(id_{\mu}^* )\ \leq\ \| id_1 \|\cdot\|id_3\|\cdot \omega_n(id_{2})\,.
\ee
Employing Lemmas \ref{ba2} and \ref{ba1} we have
 $$\|  id_1\|\lesssim \mu^{(d-1)(\frac{1}{p_1}-\frac{1}{2})_+} \, ,\qquad\   \|id_3\|\lesssim 2^{\mu(\frac{1}{2}-\frac{1}{p_2})}$$
 and
 $$\omega_n(id_2)\asymp 2^{\mu(-t+\frac{1}{p_1}-\frac{1}{2})}\omega_n(id_{p_1,2}^{D_{\mu}}).$$
 From this and \eqref{ba41} the claim follows.
 The proof is complete.
 \end{proof}
 \begin{lemma}\label{low}
 Let $1<p_2\leq 2<p_1<\infty$. Then we have
 \beqq
 \omega_n (id_{\mu}^*: (s^{t,\Omega}_{p_1,2}f)_{\mu}\to (s^{0,\Omega}_{p_2,2}f)_{\mu})&\gtrsim& 2^{-t\mu},\ \ n=\Big[\mu^{d-1}2^{\frac{2\mu}{p_1}}\Big].
 \eeqq
 \end{lemma}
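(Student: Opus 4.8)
The plan is to reduce this lower bound to the finite-dimensional estimate recorded in Lemma~\ref{Bern1}~(for $b_n$) and Lemma~\ref{Weyl1}~(for $x_n$). By Lemma~\ref{ba1}~(ii) we have, in the regime $1<p_2\le 2<p_1<\infty$, the identifications
$(s^{t,\Omega}_{p_1,p_1}f)_\mu = 2^{\mu(t-1/p_1)}\ell_{p_1}^{D_\mu}$ and $(s^{0,\Omega}_{p_2,p_2}f)_\mu = 2^{-\mu/p_2}\ell_{p_2}^{D_\mu}$, and Lemma~\ref{ba1}~(i) gives $D_\mu \asymp \mu^{d-1}2^\mu$. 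First I would insert $id_\mu^*$ into the commutative diagram
\beqq
\begin{CD}
(s^{t,\Omega}_{p_1,p_1}f)_\mu @ > id >> (s^{0,\Omega}_{p_2,p_2}f)_\mu \\
@A id_1 AA @VV id_2 V\\
(s^{t,\Omega}_{p_1,2}f)_\mu @ > id_{\mu}^* >> (s^{0,\Omega}_{p_2,2}f)_\mu \, ,
\end{CD}
\eeqq
where $id_1$ and $id_2$ are the natural identity maps. By Lemma~\ref{ba2}~(i) (with the roles of smoothness and the $\ell_q$-index accounted for), $\|id_1\| \lesssim 1$ since $p_1\ge 2$ forces the $(\tfrac1{q_2}-\tfrac1{q_1})_+$-exponent — here $\tfrac12 - \tfrac1{p_1}<0$ — to vanish, wait: one must be careful, the map $id_1$ goes from $q$-index $2$ \emph{up} to $q$-index $p_1>2$, so $\|id_1\|\lesssim \mu^{(d-1)(1/2 - 1/p_1)_+}=1$; and $id_2$ goes from $q$-index $2$ to $q$-index $p_2\le2$, i.e. \emph{down}, so $\|id_2\|\lesssim 1$ as well. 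Property~(c) of $s$-numbers (shared by $b_n$ and $x_n$) then yields
$\omega_n\big(id:(s^{t,\Omega}_{p_1,p_1}f)_\mu\to(s^{0,\Omega}_{p_2,p_2}f)_\mu\big)\lesssim \omega_n(id_\mu^*)$, and the left side equals $2^{\mu(t-1/p_1)}\cdot 2^{-\mu/p_2}\cdot\omega_n(id^{D_\mu}_{p_1,p_2})$ by the homogeneity from Lemma~\ref{ba1}~(ii).

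Next I would feed in the finite-dimensional bound. For Bernstein numbers, Lemma~\ref{Bern1}~(ii) (applicable since $1<p_2\le\max(p_2,2)<p_1$) gives $b_n(id^{D_\mu}_{p_1,p_2})\gtrsim D_\mu^{1/p_2-1/p_1}$ for $1\le n\le [D_\mu^{2/p_1}]$; for Weyl numbers, Lemma~\ref{Weyl1}~(iii) gives $x_n(id^{D_\mu}_{p_1,p_2})\gtrsim D_\mu^{1/p_2-1/2}$ for $n\le D_\mu/2$. With $D_\mu\asymp \mu^{d-1}2^\mu$ the threshold $[D_\mu^{2/p_1}] \asymp \mu^{(d-1)2/p_1}2^{2\mu/p_1}$, which (up to the log-power discrepancy, which is absorbed since we only need the stated threshold $n=[\mu^{d-1}2^{2\mu/p_1}]\lesssim [D_\mu^{2/p_1}]$ for $p_1>2$, because $2/p_1<1$ makes $\mu^{(d-1)2/p_1}\gtrsim\mu^{d-1}$... actually $2/p_1<1$ so $\mu^{(d-1)2/p_1}\le\mu^{d-1}$, hence one needs $n=[\mu^{d-1}2^{2\mu/p_1}]$ which may slightly exceed $[D_\mu^{2/p_1}]$; this is handled by a harmless adjustment of constants, or by noting $[\mu^{d-1}2^{2\mu/p_1}]\le[D_\mu^{2/p_1}]$ holds for $\mu$ large since $2^{2\mu/p_1}$ dominates the polynomial factor — in any case one takes $\mu$ large enough). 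Plugging $b_n(id^{D_\mu}_{p_1,p_2})\gtrsim (\mu^{d-1}2^\mu)^{1/p_2-1/p_1}$ into the displayed inequality gives
$b_n(id_\mu^*)\gtrsim 2^{\mu(t-1/p_1-1/p_2)}\cdot 2^{\mu(1/p_2-1/p_1)}\cdot\mu^{(d-1)(1/p_2-1/p_1)} = 2^{\mu(t-2/p_1)}\mu^{(d-1)(1/p_2-1/p_1)}$, which is $\gtrsim 2^{-t\mu}$ exactly when $t\le 2/p_1$, i.e. in the low-smoothness regime; for $x_n$ one similarly gets $x_n(id_\mu^*)\gtrsim 2^{\mu(t-1/p_1-1/2)}D_\mu^{1/p_2-1/2}\cdot 2^{-\mu(1/p_2-1/p_1)}$, wait — I should recompute: the diagram for $x_n$ should instead route through $(s^{0,\Omega}_{p_1,2}f)_\mu\to(s^{0,\Omega}_{2,2}f)_\mu$ so that the exponent $1/2-1/p_1$ appears; cf. the computation in Lemma~\ref{ba4}~(i)--(iii). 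I would simply invoke Lemma~\ref{ba4}~(i) directly: it gives $\omega_n(id_\mu^*)\gtrsim \mu^{-(d-1)(1/p_2-1/2)_+}2^{\mu(-t+1/2-1/p_2)}\omega_n(id^{D_\mu}_{2,p_2})$, but here the finite-dimensional piece $id^{D_\mu}_{2,p_2}$ with $p_2\le2$ has $x_n\asymp 1$ for $n\le D_\mu/2$ and $b_n\gtrsim D_\mu^{1/p_2-1/2}$ by Lemma~\ref{Bern1}~(i)\eqref{th2} — hmm, the powers of $D_\mu$ won't match cleanly; the cleanest route is via Lemma~\ref{ba4}~(ii) with $\epsilon$ chosen so $p_1-\epsilon>2$, giving $\omega_n(id_\mu^*)\gtrsim \mu^{-(d-1)(1/p_2-1/2)}2^{\mu(-t+1/(p_1-\epsilon)-1/p_2)}\omega_n(id^{D_\mu}_{p_1-\epsilon,p_2})$ and then applying Lemma~\ref{Bern1}~(ii)/Lemma~\ref{Weyl1}~(iii) to the right-hand factor.

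So the concrete steps are: (1) apply Lemma~\ref{ba4}~(ii) with $\epsilon$ small enough that $p_1-\epsilon>2$, reducing to $\omega_n(id^{D_\mu}_{p_1-\epsilon,p_2})$; (2) apply Lemma~\ref{Bern1}~(ii) (for $b_n$: bound $\gtrsim D_\mu^{1/p_2 - 1/(p_1-\epsilon)}$ valid for $n\le[D_\mu^{2/(p_1-\epsilon)}]$) or Lemma~\ref{Weyl1}~(iii) (for $x_n$: bound $\gtrsim D_\mu^{1/p_2-1/2}$ for $n\le D_\mu/2$); (3) substitute $D_\mu\asymp\mu^{d-1}2^\mu$ and simplify the exponents, choosing $\epsilon\to0$ at the end so the exponent $-t+1/(p_1-\epsilon)-1/p_2 + (1/p_2-1/(p_1-\epsilon)) = -t$ converges to $-t$ (the $\mu$-power $\mu^{(d-1)(1/p_2-1/(p_1-\epsilon))}/\mu^{(d-1)(1/p_2-1/2)}$ also stays bounded below); (4) check $n=[\mu^{d-1}2^{2\mu/p_1}]$ lies below the admissible threshold, which for $p_1>2$ holds for all sufficiently large $\mu$ since $[D_\mu^{2/(p_1-\epsilon)}]\asymp \mu^{(d-1)2/(p_1-\epsilon)}2^{2\mu/(p_1-\epsilon)}$ and $2/(p_1-\epsilon)>2/p_1$ makes the exponential factor strictly dominate; small $\mu$ are irrelevant since any finite set of $\mu$ changes only constants. \textbf{The main obstacle} is the bookkeeping in step~(4): matching the prescribed index $n=[\mu^{d-1}2^{2\mu/p_1}]$ against the threshold coming from Lemma~\ref{Bern1}~(ii), which is expressed in terms of $D_\mu$ rather than $2^\mu$ — this requires the $\epsilon$-perturbation precisely so that the extra exponential slack $2^{2\mu(1/(p_1-\epsilon)-1/p_1)}$ swallows the polynomial factor $\mu^{(d-1)(1-2/p_1)}$; handling the $\epsilon\to0$ limit simultaneously with the $n$-threshold is the only genuinely delicate point, everything else is substitution.
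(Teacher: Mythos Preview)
Your $\epsilon$-route has a genuine gap. In Lemma~\ref{ba4}(ii) the exponential factor is $2^{\mu(-t+1/p_1-1/p_2)}$ with $1/p_1$, \emph{not} $1/(p_1-\epsilon)$; combining this with Lemma~\ref{Bern1}(ii) therefore yields
\[
b_n(id_\mu^*)\ \gtrsim\ \mu^{-(d-1)(1/p_2-1/2)}\,2^{\mu(-t+1/p_1-1/p_2)}\, D_\mu^{\,1/p_2-1/(p_1-\epsilon)}
\ \asymp\ 2^{\mu(-t+1/p_1-1/(p_1-\epsilon))}\,\mu^{(d-1)(1/2-1/(p_1-\epsilon))},
\]
so the exponential part is $2^{-\mu(t+\delta_\epsilon)}$ with $\delta_\epsilon=\tfrac{1}{p_1-\epsilon}-\tfrac{1}{p_1}>0$. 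For every fixed $\epsilon>0$ this is exponentially weaker than $2^{-t\mu}$, and you cannot repair it by sending $\epsilon\to0$ afterwards. Your first diagram has the analogous problem in a different guise: with $\epsilon=0$ and the arrows reversed (note that as you drew them you get an \emph{upper} bound on $\omega_n(id_\mu^*)$, not a lower one) the $q$-index embeddings cost $\mu^{(d-1)(1/p_2-1/p_1)}$, which exactly cancels the $\mu$-part of $D_\mu^{1/p_2-1/p_1}$ and gives the clean $2^{-t\mu}$, but only for $n\le[D_\mu^{2/p_1}]\asymp\mu^{(d-1)\cdot 2/p_1}2^{2\mu/p_1}$. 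This falls short of the stated $n=[\mu^{d-1}2^{2\mu/p_1}]$ by the factor $\mu^{(d-1)(1-2/p_1)}$, and that polynomial loss propagates to the wrong log-exponent in Theorem~\ref{seq-4} (you would get $(\log n)^{(d-1)t}$ instead of $(\log n)^{(d-1)tp_1/2}$).

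The idea you are missing is to exploit the block structure rather than treat $\ell_{p_1}^{D_\mu}\!\to\ell_{p_2}^{D_\mu}$ as a single piece. Because $p_2\le2\le p_1$, the $b$-spaces satisfy $(s^{t,\Omega}_{p_1,2}b)_\mu\hookrightarrow(s^{t,\Omega}_{p_1,2}f)_\mu$ and $(s^{0,\Omega}_{p_2,2}f)_\mu\hookrightarrow(s^{0,\Omega}_{p_2,2}b)_\mu$ with constants independent of $\mu$; on the $b$-side the norm is an $\ell_2$-sum over the $\asymp\mu^{d-1}$ blocks $\bar\nu$ with $|\bar\nu|_1=\mu$, each block being $\ell_{p_i}^{|A_{\bar\nu}^\Omega|}$ with $|A_{\bar\nu}^\Omega|\asymp2^\mu$. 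One applies Lemma~\ref{Bern1}(ii) to each block separately, obtaining subspaces $L^{\bar\nu}_k$ of dimension $k=[2^{2\mu/p_1}]$, and takes $L^\mu=\bigoplus_{|\bar\nu|_1=\mu}L^{\bar\nu}_k$. This direct sum has dimension $\asymp\mu^{d-1}\cdot2^{2\mu/p_1}$ (the full $\mu^{d-1}$, not $\mu^{(d-1)\cdot2/p_1}$), and the blockwise inequalities combine exactly in the outer $\ell_2$-sum to give $\|\lambda|(s^{0,\Omega}_{p_2,2}b)_\mu\|/\|\lambda|(s^{t,\Omega}_{p_1,2}b)_\mu\|\gtrsim2^{-t\mu}$ on $L^\mu$. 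For Weyl numbers the paper does not use Lemma~\ref{Weyl1}(iii) here at all: instead it first shows $b_n\le x_n$ when the target is the Hilbert space $(s^{0,\Omega}_{2,2}f)_\mu$ via a duality/projection argument (Lemma~\ref{bern-gel} plus Kolmogorov--Gelfand duality), and then interpolates between target indices $p_2$ and $p_1$ (Proposition~\ref{inter}) to reach $p_2<2$.
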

 \begin{proof}
 {\it Step 1.} We concentrate on Bernstein numbers. It will be convenient for us to introduce the subspaces $(s_{p,q}^{t,\Omega}b)_{\mu}$. If $0<p, q\leq \infty$, $t\in \mathbb{R}$, $\mu\in \mathbb{N}_0$ and 
 $$\lambda=\lbrace \lambda_{\bar{\nu},\bar{m}}\in\mathbb{C}:\bar{\nu}\in \mathbb{N}_0^d,\ |\bar{\nu}|_1=\mu\,,\ \bar{m}\in A_{\bar{\nu}}^{\Omega} \rbrace \, ,$$
 then we define
 $$(s_{p,q}^{t,\Omega}b)_{\mu}=\Big\lbrace\lambda: \| \lambda|(s_{p,q}^{t,\Omega}b)_{\mu}\| =
 \Big(\sum_{|\bar{\nu}|_1=\mu}2^{|\bar{\nu}|_1(t-\frac{1}{p})q}\big(\sum_{\bar{m}\in A_{\bar{\nu}}^{\Omega}}|\lambda_{\bar{\nu},\bar{m}} |^p\big)^{\frac{q}{p}}\Big)^{\frac{1}{q}}<\infty \Big\rbrace\, .$$
 These subspaces are discussed in Vybiral \cite[Chapter 3]{Vy} and Hansen \cite[Chapter 5]{Hansen}. Since $p_2\leq 2<p_1 $ we have the chain of embeddings
 \be\label{chain}
 (s^{t,\Omega}_{p_1,2}b)_\mu \overset{id^1}{ \longrightarrow } (s^{t,\Omega}_{p_1,2}f)_\mu \overset{id_{\mu}^*}{\longrightarrow}(s^{0,\Omega}_{p_2,2}f)_\mu \overset{id^2}{\longrightarrow}(s^{0,\Omega}_{p_2,2}b)_\mu\, ,
 \ee
 with the norms of $id^1$ and $id^2$ independent of $\mu$ 
 see \cite[Lemma 5.3.4]{Hansen}. From the definition of Bernstein numbers and \eqref{chain} we deduce the existence of some constant $C>0$ such that
 \beq\label{low0}
  b_n(id_{\mu}^*: (s^{t,\Omega}_{p_1,2}f)_{\mu}\to (s^{0,\Omega}_{p_2,2}f)_{\mu})
  &=&\sup_{L_n}\inf_{\lambda\in L_n}\frac{\|\lambda\,|\,(s^{0,\Omega}_{p_2,2}f)_{\mu} \|}{\|\lambda\,|\,(s^{t,\Omega}_{p_1,2}f)_{\mu} \| }\nonumber\\
  &\geq &C\sup_{L_n}\inf_{\lambda\in L_n}\frac{\|\lambda\,|\,(s^{0,\Omega}_{p_2,2}b)_{\mu} \|}{\|\lambda\,|\,(s^{t,\Omega}_{p_1,2}b)_{\mu} \| }\, ,
   \eeq
 where $C$ is independent of $n$. Recall that the supremum is taken over all linear subspace $L_n$ of dimension $n= \Big[\mu^{d-1}2^{\frac{2\mu}{p_1}}\Big]$ in $(s_{2,2}^{0,\Omega}f)_{\mu}$. Note that 
   \be\label{low1}
   \frac{\|\lambda\, |\,(s^{0,\Omega}_{p_2,2}b)_{\mu} \|}{\|\lambda\,|\,(s^{t,\Omega}_{p_1,2}b)_{\mu} \| }  \ = \ \frac{2^{\mu(-t+\frac{1}{p_1} -\frac{1}{p_2})} \Big(\sum\limits_{|\bar{\nu}|_1=\mu}\Big(\sum\limits_{\bar{m}\in A_{\bar{\nu}}^{\Omega}}|\lambda_{\bar{\nu},\bar{m}} |      ^{p_2}\Big)^{\frac{2}{p_2}}\Big)^{\frac{1}{2}}}{ \Big(\sum\limits_{|\bar{\nu}|_1=\mu}\Big(\sum\limits_{\bar{m}\in A_{\bar{\nu}}^{\Omega}}|\lambda_{\bar{\nu},\bar{m}} |^{p_1}\Big)^{\frac{2}{p_1}}\Big)^{\frac{1}{2}} }.
   \ee
 We put $\Delta_{\mu}=\{\bar{\nu}\in \N_0^d\ :\ |\bar{\nu}|_1=\mu\}$. For each $\bar{\nu}\in \Delta_{\mu}$ the inequality 
   \be \label{low1-1}
    b_k(id_{p_1,p_2}^{|A_{\bar{\nu}}^{\Omega}|})\gtrsim 2^{|\bar{\nu}|_1(\frac{1}{p_2}-\frac{1}{p_1})},\ \  k= \big[2^{|\bar{\nu}|_1\frac{2}{p_1}}\big],
    \ee 
see Lemma \ref{Bern1} (ii), implies that  there exists a linear subspace $L_k^{\bar{\nu}}$ in $\re^{|A_{\bar{\nu}}^{\Omega}|}\times \re^{|\Delta_{\mu}|}$ of dimension $ k= \big[2^{|\bar{\nu}|_1\frac{2}{p_1}}\big]$ such that
 \beqq
 \inf_{\lambda \in L_k^{\bar{\nu}} }\frac{\Big(\sum\limits_{\bar{m}\in A_{\bar{\nu}}^{\Omega}}|\lambda_{\bar{\nu},\bar{m}} |^{p_2}\Big)^{\frac{1}{p_2}}}{\Big(\sum\limits_{\bar{m}\in A_{\bar{\nu}}^{\Omega}}|\lambda_{\bar{\nu},\bar{m}} |^{p_1}\Big)^{\frac{1}{p_1}}}&\gtrsim& \frac{2^{|\bar{\nu}|_1(\frac{1}{p_2}-\frac{1}{p_1})}}{2} \, .
 \eeqq
Here the constant behind $\gtrsim$ is the same as in \eqref{low1-1}. Consequently
 \be\label{low2}
 \Big(\sum\limits_{\bar{m}\in A_{\bar{\nu}}^{\Omega}}|\lambda_{\bar{\nu},\bar{m}} |^{p_1}\Big)^{\frac{1}{p_1}}\ \lesssim \  2^{-|\bar{\nu}|_1(\frac{1}{p_2}-\frac{1}{p_1})}  \Big(\sum\limits_{\bar{m}\in A_{\bar{\nu}}^{\Omega}}|\lambda_{\bar{\nu},\bar{m}} |^{p_2}\Big)^{\frac{1}{p_2}}.
 \ee
 holds for all $\lambda \in L_k^{\bar{\nu}}$. We put 
  $$L^{\mu}=\bigoplus_{|\bar{\nu}|_1=\mu}L_{k}^{\bar{\nu}}\,.$$
 Obviously $\text{dim}\, L^{\mu}\asymp \big[\mu^{d-1}2^{\mu\frac{2}{p_1}}\big] $. Inserting \eqref{low2} into \eqref{low1} we have found
 \beqq
  \frac{\|\lambda\ |(s^{0,\Omega}_{p_2,2}b)_{\mu} \|}{\|\lambda|(s^{t,\Omega}_{p_1,2}b)_{\mu} \| } 
  \ \gtrsim\ \frac{2^{\mu(-t+\frac{1}{p_1} -\frac{1}{p_2})} \Big(\sum\limits_{|\bar{\nu}|_1=\mu}\Big(\sum\limits_{\bar{m}\in A_{\bar{\nu}}^{\Omega}}|\lambda_{\bar{\nu},\bar{m}} |^{p_2}\Big)^{\frac{2}{p_2}}\Big)^{\frac{1}{2}}}{ \Big(\sum\limits_{|\bar{\nu}|_1=\mu}2^{-2|\bar{\nu}|_1(\frac{1}{p_2}-\frac{1}{p_1})} \Big(\sum\limits_{\bar{m}\in A_{\bar{\nu}}^{\Omega}}|\lambda_{\bar{\nu},\bar{m}} |^{p_2}\Big)^{\frac{2}{p_2}}\Big)^{\frac{1}{2}} }\ =\ 2^{-t\mu}
  \eeqq
 for all $\lambda\in L^{\mu}$. 
   In a view of \eqref{low0} the desired result follows.\\
  {\it Step 2.} We prove that
  \be\label{low2.1}
  b_n\big(id:\ (s_{p_1,2}^{t,\Omega}f)_{\mu}\to (s_{2,2}^{0,\Omega}f)_{\mu}\big) \leq x_n\big(id:\ (s_{p_1,2}^{t,\Omega}f)_{\mu}\to (s_{2,2}^{0,\Omega}f)_{\mu}\big)\, 
  \ee
 for $1<p_1<\infty$.  By $p_1'$ we denote the conjugate number of $p_1$. From Lemma \ref{bern-gel} and the duality of Kolmogorov and Gelfand numbers, see \cite[11.7.7]{Pi80-2}, we deduce
  \beq\label{low3}
  b_n\big(id:\ (s_{p_1,2}^{t,\Omega}f)_{\mu}\to (s_{2,2}^{0,\Omega}f)_{\mu}\big)&=
  &\big[c_{D_{\mu}-n+1}\big(id:\ (s_{2,2}^{0,\Omega}f)_{\mu}\to (s_{p_1,2}^{t,\Omega}f)_{\mu}\big)\big]^{-1}\nonumber \\
  &=& \big[d_{D_{\mu}-n+1}\big(id:\ (s_{p'_1,2}^{-t,\Omega}f)_{\mu}\to (s_{2,2}^{0,\Omega}f)_{\mu}\big) \big]^{-1}\, .
  \eeq
 Let $L_{D_{\mu}-n}$ be a subspace of $(s_{2,2}^{0,\Omega}f)_{\mu}$ with orthonormal basis $O^*=\{e_i^*,\ i=1,...,(D_{\mu}-n)\}$. By $O=\{e^j,\ j=1,...,n\}$ we denote an orthonormal system in $(s_{2,2}^{0,\Omega}f)_{\mu}$ such that $\{O^*,O\}$ is an orthonormal basis of $(s_{2,2}^{0,\Omega}f)_{\mu}$. Denote $(s_{2,2}^{0,\Omega}f)_{\mu,n}$  the span of $O$ with the norm induced from $(s_{2,2}^{0,\Omega}f)_{\mu}$. From the definition of Kolmogorov numbers, see \eqref{def1}, we have
  \beqq
  d_{D_{\mu}-n+1}\big(id:\ (s_{p'_1,2}^{-t,\Omega}f)_{\mu}\to (s_{2,2}^{0,\Omega}f)_{\mu}\big) &=&\inf_{L_{D_{\mu}-n}}\sup_{\|\lambda|(s_{p'_1,2}^{-t,\Omega}f)_{\mu}\|=1 }\inf_{\lambda_1\in L_{D_{\mu}-n}}\|\lambda-\lambda_1|(s_{2,2}^{0,\Omega}f)_{\mu}\|\\
  &=&\inf_{L_{D_{\mu}-n}}\sup_{\|\lambda|(s_{p'_1,2}^{-t,\Omega}f)_{\mu}\|=1 }\Big\| \sum_{j=1}^n \langle \lambda,e^j \rangle e^j\Big|(s_{2,2}^{0,\Omega}f)_{\mu}\Big\|\\
   &=&\inf_{O}\sup_{\|\lambda|(s_{p'_1,2}^{-t,\Omega}f)_{\mu}\|=1 }\Big\| \sum_{j=1}^n \langle \lambda,e^j \rangle e^j\Big|(s_{2,2}^{0,\Omega}f)_{\mu}\Big\|.
  \eeqq
  The infimum is taken over all orthonormal systems $O=\{e^j,\ j=1,...,n\}$. If we denote by $\text{Pr}$ the projection from $(s_{p'_1,2}^{-t,\Omega}f)_{\mu}$ onto $(s_{2,2}^{0,\Omega}f)_{\mu,n}$, then we get
 \be\label{low4}
 d_{D_{\mu}-n+1}\big(id:\ (s_{p'_1,2}^{-t,\Omega}f)_{\mu}\to (s_{2,2}^{0,\Omega}f)_{\mu}\big) =\inf_{O} \| \text{Pr}:\ (s_{p'_1,2}^{-t,\Omega}f)_{\mu}\to (s_{2,2}^{0,\Omega}f)_{\mu,n}\|.
  \ee
 Property (c) yields
  \beqq
  x_n\big(J: (s_{2,2}^{0,\Omega}f)_{\mu,n}
  &\to& (s_{2,2}^{0,\Omega}f)_{\mu}\big)\\
  &\leq & \| J_1:\ (s_{2,2}^{0,\Omega}f)_{\mu,n}\to (s_{p_1,2}^{t,\Omega}f)_{\mu}\|\cdot x_n\big(id:\ (s_{p_1,2}^{t,\Omega}f)_{\mu}\to (s_{2,2}^{0,\Omega}f)_{\mu}\big)\,.
  \eeqq
 Here $J$ and $J_1$ are injections from respective spaces. Note that $\text{Pr}$ is the adjoint operator of $J_1$. Hence we have
  \beqq
   x_n\big(J: (s_{2,2}^{0,\Omega}f)_{\mu,n}
   &\to& (s_{2,2}^{0,\Omega}f)_{\mu}\big)\\
   &\leq &  \| \text{Pr}:\ (s_{p'_1,2}^{-t,\Omega}f)_{\mu}\to (s_{2,2}^{0,\Omega}f)_{\mu,n}\|\cdot x_n\big(id:\ (s_{p_1,2}^{t,\Omega}f)_{\mu}\to (s_{2,2}^{0,\Omega}f)_{\mu}\big).
  \eeqq
 The equality 
  \beqq
   x_n\big(J: (s_{2,2}^{0,\Omega}f)_{\mu,n}
   \to (s_{2,2}^{0,\Omega}f)_{\mu}\big)= a_n\big(J: (s_{2,2}^{0,\Omega}f)_{\mu,n}
      \to (s_{2,2}^{0,\Omega}f)_{\mu}\big)=1\,,
  \eeqq
 see \eqref{an-xn}, implies
  \beqq
1\ \leq\  \| \text{Pr}:\ (s_{p'_1,2}^{-t,\Omega}f)_{\mu}\to (s_{2,2}^{0,\Omega}f)_{\mu,n}\|\cdot  x_n\big(id:\ (s_{p_1,2}^{t,\Omega}f)_{\mu}\to (s_{2,2}^{0,\Omega}f)_{\mu}\big)\,.
  \eeqq
  This, in connection with \eqref{low4}, results in 
  \beqq
\big[d_{D_{\mu}-n+1}\big(id:\ (s_{p'_1,2}^{-t,\Omega}f)_{\mu}\to (s_{2,2}^{0,\Omega}f)_{\mu}\big)\big]^{-1}\ \leq \  x_n\big(id:\ (s_{p_1,2}^{t,\Omega}f)_{\mu}\to (s_{2,2}^{0,\Omega}f)_{\mu}\big)\,.
  \eeqq  
In view of \eqref{low3} the inequality \eqref{low2.1} follows.\\
   {\it Step 3.} Let $p_2<2<p_1$. There exists some $\theta\in (0,1)$ such that $\frac{1}{2}=\frac{1-\theta}{p_2}+\frac{\theta}{p_1}$ and consequently
 \beqq
 \| \lambda| (s^{0,\Omega}_{2,2}f)_{\mu}\|\leq \|\lambda| (s^{0,\Omega}_{p_2,2}f)_{\mu}\|^{\theta}\cdot \|\lambda| (s^{0,\Omega}_{p_1,2}f)_{\mu}\|^{1-\theta}
 \eeqq  
for all $\lambda\in (s^{0,\Omega}_{2,2}f)_{\mu}$. Now the interpolation property of the Weyl numbers, see Proposition. \ref{inter}, and property (a) of $s$-number yield 
   \beqq
  x_n\big(id:\ (s_{p_1,2}^{t,\Omega}f)_{\mu}&\to& (s_{2,2}^{0,\Omega}f)_{\mu}\big)\\
 &\leq& x_n^{1-\theta}\big(id:\ (s_{p_1,2}^{t,\Omega}f)_{\mu}\to (s_{p_2,2}^{0,\Omega}f)_{\mu}\big)\cdot\| id:\ (s_{p_1,2}^{t,\Omega}f)_{\mu}\to (s_{p_1,2}^{0,\Omega}f)_{\mu} \|^{\theta}\\
 &\leq& x_n^{1-\theta}\big(id:\ (s_{p_1,2}^{t,\Omega}f)_{\mu}\to (s_{p_2,2}^{0,\Omega}f)_{\mu}\big)\cdot 2^{-t\mu\theta}
   \eeqq
  for $n\in \N$. Finally, choosing $n=\Big[\mu^{d-1}2^{\frac{2\mu}{p_1}}\Big]$ and taking into account  \eqref{low2.1} and Step 1 the claim follows for Weyl numbers as well. The proof is complete.
  \end{proof}
  \begin{remark}\rm
 {\rm (i)} The proof in Step 2 is similar to the proof of Satz 3.1 in \cite{Claus}.\\
 {\rm (ii)}  Lemma \ref{low} can be extended to the case $1\leq p_2\leq 2<p_1<\infty$ for Weyl numbers, see Step 3. That is, if $1\leq p_2\leq 2<p_1<\infty$, then we have
 \beqq
x_n (id_{\mu}^*: (s^{t,\Omega}_{p_1,2}f)_{\mu}\to (s^{0,\Omega}_{p_2,2}f)_{\mu})&\gtrsim& 2^{-t\mu},\ \ n=\Big[\mu^{d-1}2^{\frac{2\mu}{p_1}}\Big].
 \eeqq
  \end{remark}
There is an interesting relation of Weyl numbers and  absolutely $(r,s)$-summing norms. Let $1\leq s\leq r< \infty$.   An operator $T\in \mathcal{L}(X,Y)$ is said to be \emph{absolutely $(r,s)$-summing} if there is a constant 
  $C>0$ such that for 
  all $n\in \mathbb{N}$ and $x_1 , \dots, x_n \in X$ the inequality
  \begin{equation}\label{F2}
  \big (\sum_{j=1} ^n \|\, T x_j \, |Y\|^r \Big )^{1/r} \leq C 
  \sup_{x^* \in X^*, \|x^*|X^*\|\leq 1} \Big ( \sum_{j=1} ^n |\langle x_j , x^* \rangle|^s\Big)^{1/s}
  \end{equation}
  holds (see \cite[Chapter 17]{Pi80-2} or \cite[Section 1.2]{Pi-87}). 
  The norm $\pi_{r,s} (T)$ is given by the infimum with respect to $C>0$ satisfying (\ref{F2}).
  $X^*$ refers to the dual space of $X$. In the literature sometimes the notions $\mathcal{B}_{r,s}(T)$ and $P_{r,s}(T)$ are used instead of $\pi_{r,s}(T)$. If $r=s$ we write $\pi_{r}(T)$ instead of $\pi_{r,s}(T)$. The announced relation between Weyl numbers and
   the $(r,s)$-summing norms is given by the following lemma, see \cite{Pi80-1}.
  \begin{lemma}\label{rs1}
 Let $X$, $Y$ be Banach spaces. Let $2\leq r<\infty$ and $T\in \pi_{r,2}(X,Y)$. Then for any $n\in \N$ we have
 \beqq
 x_n(T)\leq n^{-\frac{1}{r}}\pi_{r,2}(T).
 \eeqq
  \end{lemma}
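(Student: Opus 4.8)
The plan is to reduce, via the very definition of Weyl numbers, to an estimate for a single operator from $\ell_2$ into $Y$, and then to exploit the $(r,2)$-summing inequality of $T$ along a cleverly chosen orthonormal system. Recall $x_n(T)=\sup\{a_n(TA):A\in\cl(\ell_2,X),\ \|A\|\le 1\}$, so it suffices to fix $A\in\cl(\ell_2,X)$ with $\|A\|\le 1$ and to prove $a_n(TA)\le n^{-1/r}\pi_{r,2}(T)$.

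The first step would be to construct, for a given $\varepsilon\in(0,1)$, an orthonormal system $x_1,\dots,x_n$ in $\ell_2$ with $\|TAx_k\|\ge(1-\varepsilon)\,a_k(TA)$ for $k=1,\dots,n$. This is done by peeling off orthogonal directions: pick a unit vector $x_1$ with $\|TAx_1\|\ge(1-\varepsilon)\|TA\|=(1-\varepsilon)a_1(TA)$; having chosen $x_1,\dots,x_k$, let $P_k$ be the orthogonal projection of $\ell_2$ onto their span. Since $TAP_k$ has rank $<k+1$, we have $a_{k+1}(TA)\le\|TA(I-P_k)\|=\|TA|_{H_k}\|$, where $H_k:=\{x_1,\dots,x_k\}^{\perp}$ (here one uses that $I-P_k$ maps the unit ball of $\ell_2$ onto the unit ball of $H_k$). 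As $H_k$ is infinite-dimensional, choose a unit vector $x_{k+1}\in H_k$ with $\|TAx_{k+1}\|\ge(1-\varepsilon)\|TA|_{H_k}\|\ge(1-\varepsilon)a_{k+1}(TA)$. Note that no compactness of $TA$ is needed, since only finitely many vectors are involved.

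Next I would apply the defining inequality \eqref{F2} for the $(r,2)$-summing norm of $T$ to the vectors $Ax_1,\dots,Ax_n\in X$:
\[
\Big(\sum_{k=1}^{n}\|TAx_k\|^{r}\Big)^{1/r}\ \le\ \pi_{r,2}(T)\sup_{\|x^*|X^*\|\le 1}\Big(\sum_{k=1}^{n}|\langle Ax_k,x^*\rangle|^{2}\Big)^{1/2}.
\]
Writing $\langle Ax_k,x^*\rangle=\langle x_k,A^*x^*\rangle$ and using orthonormality of the $x_k$ together with $\|A\|\le 1$, one gets $\sum_{k=1}^{n}|\langle Ax_k,x^*\rangle|^{2}\le\|A^*x^*\|_{\ell_2}^{2}\le\|A\|^{2}\|x^*\|^{2}\le 1$, so the right-hand side is at most $\pi_{r,2}(T)$. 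Combining with the lower bounds from the previous step and the monotonicity $a_1(TA)\ge\cdots\ge a_n(TA)$ (property (a)), we obtain $(1-\varepsilon)^{r}\,n\,a_n(TA)^{r}\le\sum_{k=1}^{n}\|TAx_k\|^{r}\le\pi_{r,2}(T)^{r}$. Letting $\varepsilon\downarrow 0$ yields $a_n(TA)\le n^{-1/r}\pi_{r,2}(T)$, and taking the supremum over all admissible $A$ completes the proof.

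The main obstacle is the construction in the second paragraph: one must make precise that successive orthogonal peeling is compatible with the approximation numbers, i.e. the identity $\|TA(I-P_k)\|=\|TA|_{H_k}\|$, the inequality $a_{k+1}(TA)\le\|TA(I-P_k)\|$, and the $\varepsilon$-bookkeeping. Everything after that is a direct application of the $(r,2)$-summing inequality and elementary monotonicity.
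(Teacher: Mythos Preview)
Your argument is correct and is essentially the classical proof due to Pietsch. The paper itself does not prove Lemma~\ref{rs1}; it merely states the result and refers to \cite{Pi80-1}, so there is no in-paper proof to compare against. Your construction of the orthonormal system $x_1,\dots,x_n$ via successive orthogonal peeling, followed by the application of the $(r,2)$-summing inequality and Bessel's inequality, is precisely the standard argument found in Pietsch's work (see also \cite[Section~2.7]{Pi-87}). One minor cosmetic point: after the inequality $(1-\varepsilon)^{r}\,n\,a_n(TA)^{r}\le\pi_{r,2}(T)^{r}$ you should take the $r$th root before letting $\varepsilon\downarrow 0$, which of course you do implicitly; otherwise the write-up is clean.
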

  This will be used to prove the following proposition.
 \begin{proposition}\label{rs2}Let $t\in \re$, $2\leq  p_2<p_1< \infty$ and $
 \frac{1}{r}=\frac{1/p_2-1/p_1}{1-2/p_1}$. Then we have
 \beqq
 \pi_{r,2}\big(id_{\mu}^*: (s^{t,\Omega}_{p_1,2}f)_{\mu}\to (s^{0,\Omega}_{p_2,2}f)_{\mu}\big)\ \leq\ 2^{\mu(-t+\frac{1}{p_1}-\frac{1}{p_2})}D_{\mu}^{\frac{1}{r}}.
 \eeqq
 \end{proposition}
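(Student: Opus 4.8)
The plan is to run the operator into the Hilbert block $(s^{0,\Omega}_{2,2}f)_\mu$, to estimate there an ordinary $2$-summing norm, and then to recover $\pi_{r,2}(id_\mu^*)$ by interpolating the target, exactly parallel to the way Proposition~\ref{inter} is used for Weyl numbers. Two facts serve as building blocks. First, the classical identity $\pi_2(id_{\infty,2}^{N})=N^{1/2}$ (see, e.g., \cite{Pi-87}), from which $\pi_2(id_{p_1,2}^{N})\le N^{1/2}$ for $p_1\ge2$ by factoring $\ell_{p_1}^N\to\ell_\infty^N\to\ell_2^N$. Second, for $p_1\ge2$ the embedding $(s^{t,\Omega}_{p_1,2}f)_\mu\hookrightarrow(s^{t,\Omega}_{p_1,p_1}f)_\mu$ has norm $\lesssim1$ (apply $\ell_2\hookrightarrow\ell_{p_1}$ in the inner index, cf.\ Lemma~\ref{ba2}), while $(s^{t,\Omega}_{p_1,p_1}f)_\mu=2^{\mu(t-1/p_1)}\ell_{p_1}^{D_\mu}$ and $(s^{0,\Omega}_{2,2}f)_\mu=2^{-\mu/2}\ell_2^{D_\mu}$ by Lemma~\ref{ba1}~(ii). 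Since on the level $|\bar\nu|_1=\mu$ the operator $id_\mu^*$ acts as the identity on coefficients, it factors as $(s^{t,\Omega}_{p_1,2}f)_\mu\hookrightarrow 2^{\mu(t-1/p_1)}\ell_{p_1}^{D_\mu}\to 2^{-\mu/2}\ell_2^{D_\mu}$, the second arrow being $2^{\mu(-t+1/p_1-1/2)}\,id_{p_1,2}^{D_\mu}$; by the ideal property of $\pi_2$,
\[
\pi_2\big(id_\mu^*\colon(s^{t,\Omega}_{p_1,2}f)_\mu\to(s^{0,\Omega}_{2,2}f)_\mu\big)\ \lesssim\ 2^{\mu(-t+1/p_1-1/2)}\,D_\mu^{1/2}.
\]

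Next I choose $\theta\in[0,1)$ with $\tfrac1{p_2}=\tfrac{1-\theta}{2}+\tfrac{\theta}{p_1}$, so that $1-\theta=\tfrac2r$ and $(1-\theta)\big(\tfrac12-\tfrac1{p_1}\big)=\tfrac1{p_2}-\tfrac1{p_1}$. With $g_{\bar\nu}=\sum_{\bar m}\lambda_{\bar\nu,\bar m}\chi_{\bar\nu,\bar m}$ and $G=\big(\sum_{|\bar\nu|_1=\mu}|g_{\bar\nu}|^2\big)^{1/2}$, the norms of $\lambda$ in $(s^{0,\Omega}_{p_2,2}f)_\mu$, $(s^{0,\Omega}_{2,2}f)_\mu$, $(s^{0,\Omega}_{p_1,2}f)_\mu$ are $\|G|L_{p_2}\|$, $\|G|L_2\|$, $\|G|L_{p_1}\|$, so log-convexity of $L_p$-norms gives $\|\lambda|(s^{0,\Omega}_{p_2,2}f)_\mu\|\le\|\lambda|(s^{0,\Omega}_{2,2}f)_\mu\|^{1-\theta}\|\lambda|(s^{0,\Omega}_{p_1,2}f)_\mu\|^{\theta}$. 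A Hölder argument now furnishes the $(r,2)$-analogue of Proposition~\ref{inter}: for a finite system in the domain with weak $\ell_2$-norm $\le1$ every element has norm $\le1$, so raising the interpolation inequality to the power $r$, pulling out $\|id_\mu^*\colon(s^{t,\Omega}_{p_1,2}f)_\mu\to(s^{0,\Omega}_{p_1,2}f)_\mu\|^{\theta r}$ and using $(1-\theta)r=2$ reduces matters to \eqref{F2} for $\pi_2$, giving
\[
\pi_{r,2}(id_\mu^*)\ \le\ \pi_2\big(id_\mu^*\colon(s^{t,\Omega}_{p_1,2}f)_\mu\to(s^{0,\Omega}_{2,2}f)_\mu\big)^{1-\theta}\,\big\|id_\mu^*\colon(s^{t,\Omega}_{p_1,2}f)_\mu\to(s^{0,\Omega}_{p_1,2}f)_\mu\big\|^{\theta}.
\]
Since the smoothness weight $2^{\mu t}$ is constant on the level $|\bar\nu|_1=\mu$, the last operator norm equals $2^{-\mu t}$. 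Inserting this together with the first display, and using $1-\theta=\tfrac2r$ and $(1-\theta)\big(\tfrac12-\tfrac1{p_1}\big)=\tfrac1{p_2}-\tfrac1{p_1}$, the powers of $2^\mu$ collapse to $-t+\tfrac1{p_1}-\tfrac1{p_2}$ and the power of $D_\mu$ to $\tfrac1r$, which is the assertion; when $p_2=2$ one has $\theta=0$, $r=2$, and the claim is already the first display.

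The point requiring care — the hard part — is that $\|id_\mu^*\colon(s^{t,\Omega}_{p_1,2}f)_\mu\to(s^{0,\Omega}_{p_2,2}f)_\mu\|\asymp2^{-\mu t}$, so the target exponent $-t+\tfrac1{p_1}-\tfrac1{p_2}$ is strictly smaller than the operator-norm exponent; the compression gain $2^{\mu(1/p_1-1/p_2)}$ has to be produced purely by the summing structure. This forces one to keep the second index equal to $2$ at both endpoints of the interpolation, so that the $\theta$-factor is the clean lifting $2^{-\mu t}$ with no additional $\mu^{d-1}$, and to extract the compensating $2^{\mu/p_1}$ from the cost-free embedding $(s^{t,\Omega}_{p_1,2}f)_\mu\hookrightarrow 2^{\mu(t-1/p_1)}\ell_{p_1}^{D_\mu}$ used \emph{inside} the $\pi_2$-estimate. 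Passing instead to the ``$b$''-scale on the target, or routing through $id_{p_1,p_2}^{D_\mu}$ directly, inserts a spurious factor $\mu^{(d-1)(1/2-1/p_2)}$; avoiding it is the essential difficulty.
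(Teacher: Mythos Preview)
Your proof is correct and follows essentially the same route as the paper: factor through $(s^{t,\Omega}_{p_1,p_1}f)_\mu = 2^{\mu(t-1/p_1)}\ell_{p_1}^{D_\mu}$ to estimate $\pi_2$ into the Hilbert block, then interpolate the target between $(s^{0,\Omega}_{2,2}f)_\mu$ and $(s^{0,\Omega}_{p_1,2}f)_\mu$ to recover $\pi_{r,2}$. Your $\theta$ is the paper's $1-\theta$, and you spell out the H\"older step for the interpolation of $(r,2)$-summing norms that the paper simply asserts (``the definition of the absolutely $(r,s)$-summing norms yields\ldots''), but the argument is identical in substance.
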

 \begin{proof} We consider the case   $2<p_2<p_1<\infty$. Let $\theta=\frac{1/p_2-1/p_1}{1/2-1/p_1}$. Then we find
 \beqq
 \frac{1}{p_2}=\frac{\theta}{2}+\frac{1-\theta}{p_1}\qquad \text{and}\qquad \frac{1}{r}=\frac{\theta}{2}+\frac{1-\theta}{\infty}.
 \eeqq
 By H\"older's inequality we obtain
 \beqq
 \| \lambda| (s^{0,\Omega}_{p_2,2}f)_{\mu}\|\leq \|\lambda| (s^{0,\Omega}_{2,2}f)_{\mu}\|^{\theta}\cdot \|\lambda| (s^{0,\Omega}_{p_1,2}f)_{\mu}\|^{1-\theta}
 \eeqq
 for all $\lambda\in (s^{0,\Omega}_{p_2,2}f)_{\mu}$. The definition of the absolutely $(r,s)$-summing norms yields that
 \beqq
 \pi_{r,2}\big(id_{\mu}^* :\ (s^{t,\Omega}_{p_1,2}f)_{\mu}&\to& (s^{0,\Omega}_{p_2,2}f)_{\mu}\big)\\
 &\leq & \pi^{\theta}_{2}\big(id:\ (s^{t,\Omega}_{p_1,2}f)_{\mu}\to (s^{0,\Omega}_{2,2}f)_{\mu}\big)\cdot \|id:\ (s^{t,\Omega}_{p_1,2}f)_{\mu}\to (s^{0,\Omega}_{p_1,2}f)_{\mu}\|^{1-\theta}\,.
 \eeqq
 Note that the chain of embeddings
 \beqq
 (s^{t,\Omega}_{p_1,2}f)_{\mu}\hookrightarrow (s^{t,\Omega}_{p_1,p_1}f)_{\mu}\hookrightarrow (s^{0,\Omega}_{2,2}f)_{\mu}
 \eeqq
  implies
  \be \label{pi}
  \pi_{2}\big(id:\ (s^{t,\Omega}_{p_1,2}f)_{\mu}\to (s^{0,\Omega}_{2,2}f)_{\mu}\big)\leq \pi_{2}\big(id:\ (s^{t,\Omega}_{p_1,p_1}f)_{\mu}\to (s^{0,\Omega}_{2,2}f)_{\mu}\big),
  \ee 
since $ \pi_{r,s}$ is an operator ideal, see \cite[Theorem 1.2.3]{Pi-87}. From this and Lemmas \ref{ba1}, \ref{ba2} we derive
 \beqq
 \pi_{r,2}\big(id_{\mu}^* :\ (s^{t,\Omega}_{p_1,2}f)_{\mu}\to (s^{0,\Omega}_{p_2,2}f)_{\mu}\big)&\lesssim& \pi^{\theta}_{2}\big(id:\ (s^{t,\Omega}_{p_1,p_1}f)_{\mu}\to (s^{0,\Omega}_{2,2}f)_{\mu}\big)\cdot 2^{-t\mu(1-\theta)}\\
 &\lesssim & \big[2^{\mu(-t+\frac{1}{p_1}-\frac{1}{2})}\pi_{2}\big(id:\ \ell_{p_1}^{D_{\mu}}\to \ell_{2}^{D_{\mu}})\big]^{\theta}\cdot 2^{-t\mu(1-\theta)}\, .
 \eeqq
 Finally, the equality $\pi_{2}\big(id:\ \ell_{p_1}^{m}\to \ell_{2}^{m})= m^{\frac{1}{2}}$, see  \cite[page 309]{Pi80-2}, yields the claimed estimate. The case $p_2=2$  is a consequence of \eqref{pi}. This finishes the proof.
 \end{proof}
 The following corollary is a consequence of Lemma \ref{rs1} and Proposition \ref{rs2}.
 \begin{corollary}\label{cor-weyl}Let $2\leq  p_2<p_1<\infty$. Then 
 \beqq
 x_n\big(id_{\mu}^*: (s^{t,\Omega}_{p_1,2}f)_{\mu}\to (s^{0,\Omega}_{p_2,2}f)_{\mu}\big)\lesssim 2^{\mu(-t+\frac{1}{p_1}-\frac{1}{p_2})}\Big(\frac{D_{\mu}}{n}\Big)^{\frac{1}{r}}\, ,\ \ \ \ \frac{1}{r}=\frac{1/p_2-1/p_1}{1-2/p_1}
 \eeqq
 holds for all $n\in \N$.
 \end{corollary}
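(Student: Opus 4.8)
The plan is to obtain the corollary as an immediate composition of the two results just proved: the abstract Pietsch-type bound linking Weyl numbers to absolutely $(r,2)$-summing norms (Lemma \ref{rs1}) and the explicit estimate for $\pi_{r,2}(id_\mu^*)$ from Proposition \ref{rs2}. No new idea is needed; the argument is one line plus a sanity check on the admissibility of the exponent.

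First I would verify that the exponent $r$ defined through $\frac1r=\frac{1/p_2-1/p_1}{1-2/p_1}$ satisfies $r\ge 2$, since this is required for Lemma \ref{rs1} to be applicable. This amounts to $\frac1r\le\frac12$, i.e. $2(1/p_2-1/p_1)\le 1-2/p_1$, i.e. $2/p_2\le 1$, which holds precisely because $p_2\ge 2$ by hypothesis; note also $\frac1r>0$ because $p_2<p_1$ and $p_1>2$. Next I would apply Lemma \ref{rs1} with this $r$ to the operator $T=id_\mu^*\colon (s^{t,\Omega}_{p_1,2}f)_\mu\to (s^{0,\Omega}_{p_2,2}f)_\mu$, which gives $x_n(id_\mu^*)\le n^{-1/r}\,\pi_{r,2}(id_\mu^*)$ for all $n\in\N$. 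Inserting the bound $\pi_{r,2}(id_\mu^*)\le 2^{\mu(-t+1/p_1-1/p_2)}D_\mu^{1/r}$ furnished by Proposition \ref{rs2} yields
\[
x_n\big(id_\mu^*\colon (s^{t,\Omega}_{p_1,2}f)_\mu\to (s^{0,\Omega}_{p_2,2}f)_\mu\big)\ \le\ 2^{\mu(-t+1/p_1-1/p_2)}\,\Big(\frac{D_\mu}{n}\Big)^{1/r},
\]
which is exactly the asserted estimate.

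There is essentially no obstacle here: the corollary is a direct concatenation of the preceding lemma and proposition, and the only point demanding a moment's attention is the elementary verification that $r\ge 2$, so that Lemma \ref{rs1} is legitimately invoked. If desired one may add the harmless remark that the bound is of substance only in the range $n\le D_\mu$, since for $n>D_\mu$ the operator $id_\mu^*$ has rank $<n$ and its $n$th Weyl number vanishes, so the displayed inequality holds trivially.
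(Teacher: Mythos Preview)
Your proposal is correct and matches the paper's own approach: the paper simply states that the corollary is a consequence of Lemma \ref{rs1} and Proposition \ref{rs2}, and your write-up supplies exactly that one-line composition together with the (appropriate) verification that $r\ge 2$ under the hypothesis $p_2\ge 2$.
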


\subsection{The results for Weyl numbers}
\begin{theorem}\label{seq-1}
 Let $1\leq p_1\leq 2\leq p_2< \infty$ and $t>\frac{1}{p_1}-\frac{1}{p_2}$. Then
\beqq 
x_n(id^*)\asymp  n^{-t+\frac{1}{2}-\frac{1}{p_2}}(\log n)^{(d-1)(t-\frac{1}{2}+\frac{1}{p_2})}\, ,\ \ \ \ n\geq 2.
\eeqq
\end{theorem}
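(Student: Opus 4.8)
The proof separates into an upper and a lower estimate, both carried out in the sequence–space model. Throughout put $\gamma:=\tfrac1{p_1}-\tfrac12\ge 0$, $a:=t-\tfrac1{p_1}+\tfrac1{p_2}>0$ (positive by hypothesis) and $\alpha:=t-\tfrac12+\tfrac1{p_2}=\gamma+a$, so that the claimed order is $n^{-\alpha}(\log n)^{(d-1)\alpha}$.

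\emph{Upper bound.} I would run the standard block–decomposition argument. Starting from \eqref{sum} (note that here $(\tfrac1{p_1}-\tfrac1{p_2})_+=\tfrac1{p_1}-\tfrac1{p_2}$),
\[
x_n(id^*)\ \lesssim\ \sum_{\mu=J+1}^{L}x_{n_\mu}(id^*_\mu)\ +\ 2^{-La},\qquad n-1=\sum_{\mu=0}^{L}(n_\mu-1),\quad n_\mu=D_\mu+1\ (0\le\mu\le J).
\]
For the middle blocks, Lemma \ref{ba4}(iii) followed by Lemma \ref{Weyl1}(i) in the regime $1\le p_1\le 2$ (target index $2$) gives, whenever $2n_\mu\le D_\mu\asymp\mu^{d-1}2^\mu$,
\[
x_{n_\mu}(id^*_\mu)\ \lesssim\ \mu^{(d-1)\gamma}\,2^{-\mu a}\,x_{n_\mu}\big(id^{D_\mu}_{p_1,2}\big)\ \asymp\ \Big(\frac{\mu^{d-1}}{n_\mu}\Big)^{\!\gamma}2^{-\mu a}.
\]
Then I would choose $J$ with $\sum_{\mu\le J}D_\mu\asymp J^{d-1}2^J\asymp n$ (so $J\asymp\log n$ and $2^J\asymp n(\log n)^{-(d-1)}$), and on the middle range pick the $n_\mu$ so that the block errors decay geometrically, $x_{n_\mu}(id^*_\mu)\lesssim 2^{-J\alpha}2^{-c(\mu-J)}$ for a small fixed $c\in(0,a)$; concretely this forces $n_\mu\asymp D_J\,2^{-(a-c)(\mu-J)/\gamma}$, a quantity of order $D_J$ near $\mu=J$ which decays (because $c<a$) down to $1$ at some $L\asymp\log n$. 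Since $c<a\le\alpha$ these $n_\mu$ are summable with $\sum_\mu(n_\mu-1)\asymp n$, they satisfy $2n_\mu\le D_\mu$ throughout the middle range, the middle errors sum to $\lesssim 2^{-J\alpha}$, and the tail $2^{-La}$ is dominated by $2^{-J\alpha}$ as well. Hence $x_n(id^*)\lesssim 2^{-J\alpha}\asymp n^{-\alpha}(\log n)^{(d-1)\alpha}$. (If $p_1=2$, i.e.\ $\gamma=0$, there is no middle range: one kills the first $J$ blocks and uses $\|id^*_\mu\|\lesssim 2^{-\mu a}=2^{-\mu\alpha}$ for $\mu>J$.)

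\emph{Lower bound.} This is short. By Lemma \ref{lowa}, $x_n(id^*)\ge x_n(id^*_\mu)$ for every $\mu$; since $p_1\le 2$ and $(\tfrac1{p_2}-\tfrac12)_+=0$, Lemma \ref{ba4}(i) gives $x_n(id^*_\mu)\gtrsim 2^{-\mu\alpha}\,x_n(id^{D_\mu}_{2,p_2})$; and since $2\le p_2$, Lemma \ref{Weyl1}(i) (case $2\le p_1\le p_2$ with first index $2$) gives $x_n(id^{D_\mu}_{2,p_2})\asymp 1$ whenever $2n\le D_\mu$. Taking $\mu$ to be the smallest index with $2n\le D_\mu$, so that $D_\mu\asymp n$ and $2^\mu\asymp n(\log n)^{-(d-1)}$, we obtain $x_n(id^*)\gtrsim 2^{-\mu\alpha}\asymp n^{-\alpha}(\log n)^{(d-1)\alpha}$, which matches the upper bound.

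\emph{Main difficulty.} Everything is routine except the upper–bound optimization: one must simultaneously respect the budget constraint $\sum(n_\mu-1)\asymp n$, the admissibility constraint $2n_\mu\le D_\mu$, and the requirement that neither the sum of the block errors nor the tail exceeds the target order $2^{-J\alpha}$. The balance of these three is exactly what pins down the exponents $\alpha$ (on $n$) and $(d-1)\alpha$ (on $\log n$), and it is precisely where the condition $t>\tfrac1{p_1}-\tfrac1{p_2}$, equivalently $0\le\gamma<\alpha$, enters — both to make the $n_\mu$ summable and to keep the finite–dimensional estimate admissible on the whole middle range.
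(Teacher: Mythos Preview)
Your proof is correct and essentially identical to the paper's. The lower bound (Lemma~\ref{lowa} plus \eqref{ba4-1} and \eqref{wth1} at $n\asymp D_\mu$) is exactly what the paper does; for the upper bound the paper also uses \eqref{sum}, Lemma~\ref{ba4}(iii) and \eqref{wth2}, choosing $n_\mu=D_\mu\,2^{(J-\mu)\lambda}$ with $\lambda>1$ and $\alpha>\lambda\gamma$ --- your parametrization is the same choice in disguise, since setting $\lambda=1+(a-c)/\gamma$ turns your constraints $0<c<a$ into precisely the paper's two conditions on $\lambda$.
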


\begin{proof}{\it Step 1.} Estimate from below. Because of $p_1\leq 2\leq p_2$, Lemma \ref{lowa} and \eqref{ba4-1} imply
$$x_n(id^*)\gtrsim 2^{\mu(-t+\frac{1}{2}-\frac{1}{p_2})}x_n(id_{2,p_2}^{D_{\mu}}).$$
We choose $n=\big[\dfrac{D_{\mu}}{2}\big]$. Then \eqref{wth1} yields $x_n(id_{2,p_2}^{D_{\mu}})\asymp1$. Hence
$$x_n(id^*)\gtrsim 2^{\mu(-t+\frac{1}{p_2}-\frac{1}{2})}. $$
Because of $2^{\mu}\asymp \frac{n}{(\log n)^{d-1}}$ we conclude 
\beqq
x_n(id^*)\gtrsim n^{-t+\frac{1}{2}-\frac{1}{p_2}}(\log n)^{(d-1)(t-\frac{1}{2}+\frac{1}{p_2})}.
\eeqq
{\it Step 2.} Estimate from above. We shall use \eqref{sum}. We choose $L> J$ such that
$$2^{L(-t+\frac{1}{p_1}-\frac{1}{p_2})} \lesssim 2^{J(-t+\frac{1}{2}-\frac{1}{p_2})}\,.$$
Then we obtain from \eqref{sum}
\beqq
x_n(id^*)\lesssim \sum_{\mu=J+1}^{L}x_{n_{\mu}}(id_{\mu}^*)+  2^{J(-t+\frac{1}{2}-\frac{1}{p_2})}.
\eeqq
We define 
$$ n_{\mu}= D_{\mu}2^{(J-\mu)\lambda}\leq \dfrac{D_{\mu}}{2},\ \ J+1\leq \mu\leq L$$ 
with $\lambda$ satisfying the relations
\begin{equation}\label{cond2}
\lambda>1\qquad\text{and}\qquad t+\frac{1}{p_2}-\frac{1}{2}>\lambda\bigg(\dfrac{1}{p_1}-\dfrac{1}{2}\bigg).
\end{equation}
This implies
$$\sum_{\mu=J+1}^{L}n_{\mu}\asymp J^{d-1}2^J.$$
Now \eqref{ba4-3} and \eqref{wth2} yield
\beqq
x_{n_{\mu}}(id_{\mu}^*)
&\lesssim & \mu^{(d-1)(\frac{1}{p_1}-\frac{1}{2})}2^{\mu(-t+\frac{1}{p_1}-\frac{1}{p_2})}x_{n_\mu}(id_{p_1,2}^{D_{\mu}})\\
&\lesssim& \mu^{(d-1)(\frac{1}{p_1}-\frac{1}{2})}2^{\mu(-t+\frac{1}{p_1}-\frac{1}{p_2})}(D_{\mu}2^{(J-\mu)\lambda})^{\frac{1}{2}-\frac{1}{p_1}}\\
&\asymp& 2^{\mu(-t+\frac{1}{2}-\frac{1}{p_2})}  2^{(J-\mu)\lambda(\frac{1}{2}-\frac{1}{p_1})}\,.
\eeqq
Taking into account  the condition (\ref{cond2}), we obtain
$$\sum_{\mu=J+1}^{L}x_{n_{\mu}}(id_{\mu}^*)\lesssim 2^{J(-t+\frac{1}{2}-\frac{1}{p_2})}. $$
Consequently we get
$$x_n(id^*)\lesssim 2^{J(-t+\frac{1}{2}-\frac{1}{p_2})}.$$
Notice that  $n=n_J=c.2^JJ^{(d-1)}$. Without loss of generality we assume that
\[
A\, J^{d-1}\, 2^J\le n_J \le B \, J^{d-1}\, 2^J\, , \qquad J \in \N\, ,
\] 
for some $A,B\in \N$ independent of $n$. 
 Then we conclude from the monotonicity of the Weyl numbers
\[
x_{B \, J^{d-1}\, 2^J}(id^*) \lesssim 
\Big(\frac{B \, J^{d-1}\, 2^J}{\log^{d-1} (B \, J^{d-1}\, 2^J)}\Big)^{- t+\frac{1}{2}-\frac{1}{p_2}} \, .
\]
Employing one more times the monotonicity of the Weyl numbers and in addition its polynomial behaviour we can switch 
from the subsequence $(B \, J^{d-1}\, 2^J)_J$ to $n\in \N$ in this formula by possibly changing the constant behind $\lesssim$.
This finishes our proof.
\end{proof}

\begin{theorem}\label{seq-2}
Let $1\leq p_2, p_1\leq 2$ and $t>\Big(\frac{1}{p_1}-\frac{1}{p_2}\Big)_+$. Then we have
$$x_n(id^*)\asymp n^{-t}(\log n)^{(d-1)t}\, ,\ \ \ \ \  n\geq 2.$$
\end{theorem}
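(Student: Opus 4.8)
The plan is to prove the matching two-sided bound for the Weyl numbers of the sequence–space embedding $id^*\colon s_{p_1,2}^{t,\Omega}f\to s_{p_2,2}^{0,\Omega}f$ and then, exactly as in the proof of Theorem~\ref{seq-1}, to pass from the subsequence $n_J\asymp J^{d-1}2^J$ to all $n\ge 2$ using monotonicity and the polynomial behaviour of the Weyl numbers; recall $D_\mu\asymp\mu^{d-1}2^\mu$ and $2^\mu\asymp n/(\log n)^{d-1}$ along that subsequence, so the goal is $x_{n}(id^*)\asymp 2^{-t\mu}$ there. For the lower bound I would combine Lemma~\ref{lowa} with \eqref{ba4-1} (admissible since $p_1\le 2$) to obtain $x_n(id^*)\gtrsim\mu^{-(d-1)(1/p_2-1/2)}2^{\mu(-t+1/2-1/p_2)}x_n(id_{2,p_2}^{D_\mu})$; choosing $n=[D_\mu/2]$, the identity $id_{2,p_2}^{D_\mu}$ falls in the range of \eqref{wth4} (or \eqref{wth1} if $p_2=2$), so $x_{[D_\mu/2]}(id_{2,p_2}^{D_\mu})\asymp D_\mu^{1/p_2-1/2}$, and after inserting $D_\mu\asymp\mu^{d-1}2^\mu$ the powers of $\mu$ cancel and $x_{[D_\mu/2]}(id^*)\gtrsim 2^{-t\mu}$.

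For the upper bound I split into $p_2\le p_1$ and $p_1<p_2$. If $p_2\le p_1$ (in particular $p_1=p_2$) then $(1/p_1-1/p_2)_+=0$, so Lemma~\ref{ba2} gives $\|id_{\mu}^*\|\lesssim 2^{-t\mu}$; feeding $n_\mu=D_\mu+1$ for $0\le\mu\le J$ and $n_\mu=1$ for $J<\mu\le L$ (say $L=J+1$) into \eqref{sum} yields $n\asymp J^{d-1}2^J$ and $x_n(id^*)\lesssim\sum_{\mu=J+1}^{L}2^{-t\mu}+2^{-tL}\lesssim 2^{-tJ}$ immediately.

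The case $p_1<p_2\le 2$ is the heart of the matter and rests on a sharp one‑block estimate. Let $\theta\in(0,1]$ be given by $1/p_2=(1-\theta)/p_1+\theta/2$; by log‑convexity of $L_p$‑norms one has $\|\lambda\,|\,(s_{p_2,2}^{0,\Omega}f)_\mu\|\le\|\lambda\,|\,(s_{p_1,2}^{0,\Omega}f)_\mu\|^{1-\theta}\,\|\lambda\,|\,(s_{2,2}^{0,\Omega}f)_\mu\|^{\theta}$, so the interpolation property of Weyl numbers, Proposition~\ref{inter}, applied to $id_{\mu}^*\colon(s_{p_1,2}^{t,\Omega}f)_\mu\to(s_{p_2,2}^{0,\Omega}f)_\mu$ with $Y_0=(s_{p_1,2}^{0,\Omega}f)_\mu$ and $Y_1=(s_{2,2}^{0,\Omega}f)_\mu$, bounds $x_{2k-1}(id_{\mu}^*)$ by $\bigl(x_k(id_{\mu}^*\colon(s_{p_1,2}^{t,\Omega}f)_\mu\to(s_{p_1,2}^{0,\Omega}f)_\mu)\bigr)^{1-\theta}\bigl(x_k(id_{\mu}^*\colon(s_{p_1,2}^{t,\Omega}f)_\mu\to(s_{2,2}^{0,\Omega}f)_\mu)\bigr)^{\theta}$. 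The first factor is at most $\|id_{\mu}^*\colon(s_{p_1,2}^{t,\Omega}f)_\mu\to(s_{p_1,2}^{0,\Omega}f)_\mu\|=2^{-t\mu}$, and the second is estimated by \eqref{ba4-3} with $p_2$ there replaced by $2$, combined with \eqref{wth2}; using $\theta(1/p_1-1/2)=1/p_1-1/p_2$ this collapses to
\[
x_n(id_{\mu}^*)\ \lesssim\ \mu^{(d-1)(1/p_1-1/p_2)}\,2^{\mu(-t+1/p_1-1/p_2)}\,n^{1/p_2-1/p_1}\,,\qquad 2n\le D_\mu\,.
\]
Inserting this into \eqref{sum} with $n_\mu=D_\mu+1$ for $\mu\le J$ and $n_\mu=D_\mu 2^{(J-\mu)\lambda}$ for $J<\mu\le L:=\lceil Jt/(t-1/p_1+1/p_2)\rceil$, where $1<\lambda<t/(1/p_1-1/p_2)$ — such a $\lambda$ exists precisely because $t>1/p_1-1/p_2$ — one gets $n\asymp J^{d-1}2^J$, $2n_\mu\le D_\mu$ for $\mu>J$, and, on substituting $D_\mu\asymp\mu^{d-1}2^\mu$, all powers of $\mu$ cancel, leaving $x_{n_\mu}(id_{\mu}^*)\lesssim 2^{-t\mu}2^{(J-\mu)\lambda(1/p_2-1/p_1)}=2^{-J\lambda(1/p_1-1/p_2)}2^{\mu(-t+\lambda(1/p_1-1/p_2))}$. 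Since $-t+\lambda(1/p_1-1/p_2)<0$, summing the geometric series over $J<\mu\le L$ gives $\lesssim 2^{-tJ}$, while the tail $2^{-L(t-1/p_1+1/p_2)}\lesssim 2^{-tJ}$ by the choice of $L$; hence $x_n(id^*)\lesssim 2^{-tJ}$ along the subsequence, and passing to general $n$ finishes the proof.

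The step I expect to be the main obstacle is exactly this one‑block estimate for $p_1<p_2\le 2$. A direct factorisation of $id_{\mu}^*$ through $\ell_p$‑type (``$b$‑type'') spaces replaces the inner $\ell_2$‑sum over the $\asymp\mu^{d-1}$ directions $\bar\nu$ with $|\bar\nu|_1=\mu$ by an $\ell_{p_1}$‑sum and so loses a factor $\mu^{(d-1)(1/p_1-1/2)}$, which is too large and would corrupt the logarithmic exponent; likewise, interpolating the function spaces themselves through $L_2$ fails in the small‑smoothness range $1/p_1-1/p_2<t\le 1/p_1-1/2$ because $id\colon S_{p_1}^tH(\Omega)\to L_2(\Omega)$ need not even be compact. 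Interpolating instead the \emph{target block space} between $(s_{p_1,2}^{0,\Omega}f)_\mu$, on which $id_{\mu}^*$ is a scalar multiple of the identity, and $(s_{2,2}^{0,\Omega}f)_\mu$, where \eqref{ba4-3} is available, is what generates exactly the compensating power of $\mu$ and makes the whole range $t>1/p_1-1/p_2$ accessible by the uniform block argument.
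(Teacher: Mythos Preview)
Your proof is correct. The lower bound and the upper bound for $p_2\le p_1$ are essentially identical to the paper's (the paper phrases the latter via an explicit rank-$\sum_{\mu\le J}D_\mu$ operator $S_J$ and $x_n\le a_n$, which amounts to your choice $n_\mu=1$ for $\mu>J$).

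The genuine difference is in the upper bound for $p_1<p_2\le 2$. The paper interpolates at the \emph{full sequence-space} level: with the same $\theta$, it sets $t_1=1/p_1-1/p_2$, $t_2=1/2-1/p_2$, invokes $[s^{t_1,\Omega}_{p_1,2}f,\,s^{t_2,\Omega}_{2,2}f]_\theta=s^{0,\Omega}_{p_2,2}f$, and then applies Proposition~\ref{inter} together with the lifting property to feed in the already-proved Weyl asymptotics for $id:s^{t-t_1,\Omega}_{p_1,2}f\to s^{0,\Omega}_{p_1,2}f$ (Step~2) and $id:s^{t-t_2,\Omega}_{p_1,2}f\to s^{0,\Omega}_{2,2}f$ (Theorem~\ref{seq-1}). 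This is shorter once those ingredients are in place, but it relies on complex interpolation of the sequence spaces, on lifting, and on Theorem~\ref{seq-1}. Your route interpolates at the \emph{block} level: you use only the elementary log-convexity of $L_p$-norms to obtain the sharp one-block bound $x_n(id_\mu^*)\lesssim \mu^{(d-1)(1/p_1-1/p_2)}2^{\mu(-t+1/p_1-1/p_2)}n^{1/p_2-1/p_1}$, and then run the standard summation \eqref{sum} with the choices $n_\mu=D_\mu 2^{(J-\mu)\lambda}$, $1<\lambda<t/(1/p_1-1/p_2)$. This is more self-contained (no appeal to Theorem~\ref{seq-1}, no complex interpolation, no lifting) and makes explicit why the approach works in the full range $t>1/p_1-1/p_2$; the trade-off is that you redo the block summation that the paper's interpolation bypasses.
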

\begin{proof}
{\it Step 1.} Estimate from below. Since $p_1,p_2\leq 2$, from Lemma \ref{lowa} and \eqref{ba4-1} we have
$$x_n(id^*) \gtrsim \mu^{(d-1)(\frac{1}{2}-\frac{1}{p_2})}2^{\mu(-t+\frac{1}{2}-\frac{1}{p_2})}x_n(id_{2,p_2}^{D_{\mu}} ). $$
By choosing $n=\big[\dfrac{D_{\mu}}{2}\big]$ together with \eqref{wth4} we obtain
\beqq
x_n(id^*) 
&\gtrsim & \mu^{(d-1)(\frac{1}{2}-\frac{1}{p_2})}2^{\mu(-t+\frac{1}{2}-\frac{1}{p_2})}(D_{\mu})^{\frac{1}{p_2}-\frac{1}{2}}\\
&\asymp & 2^{\mu(-t)}.
\eeqq
Because of $2^{\mu}\asymp \frac{n}{(\log n)^{d-1}}$ the desired estimate follows.\\
{\it Step 2.} Estimate from above in case $1\leq p_2\leq p_1\leq 2$ and $t>0$. For $J\in \mathbb{N}$ and $\lambda\in s^{t,\Omega}_{p_1,2}f$ we put
\[
S_J\lambda :=\sum_{\mu=0}^{J}\sum_{|\bar{\nu}|_1=\mu}\sum_{\bar{m}\in A_{\bar{\nu}}^{\Omega}}\lambda_{\bar{\nu},\bar{m}}e^{\bar{\nu},\bar{m}}\, , 
\]
where $\{e^{\bar{\nu},\bar{m}}, \bar{\nu}\in \mathbb{N}_0^d,\ \bar{m}\in A_{\bar{\nu}}^{\Omega}\}$ is the canonical orthonormal basis of 
$s^{0,\Omega}_{2,2}f$. Obviously
\[
\| id^*-S_J: s^{t,\Omega}_{p_1,2}f \to s^{0,\Omega}_{p_2,2}f\|
\leq \sum_{\mu=J+1}^{\infty}\| id^*_{\mu}: (s^{t,\Omega}_{p_1,2}f)_{\mu} \to (s^{0,\Omega}_{p_2,2}f)_{\mu}\|\,.
\]
Lemma \ref{ba2} yields
\[
\| id^*-S_J: s^{t,\Omega}_{p_1,2}f \to s^{0,\Omega}_{p_2,2}f\|\leq \sum_{\mu=J+1}^{\infty}2^{-t\mu }
\lesssim 2^{-Jt} \, .
\] 
Because of $\text{rank}(S_J)\asymp 2^JJ^{d-1}$ we conclude in case $n=2^JJ^{d-1}$ that 
\[
a_n( id^*)\lesssim 2^{-Jt}\, .
\]
Now using the same argument as at the end of the proof of Theorem \ref{seq-1} and the inequality $x_n \leq a_n$ we get
$$x_n(id^*)\lesssim n^{-t}(\log n)^{(d-1)t}.$$
{\it Step 3.}  Estimate from above for the case $1\leq p_1< p_2< 2$ and $t>\frac{1}{p_1}-\frac{1}{p_2}$. By defining $\theta=\frac{1/p_1-1/p_2}{1/p_1-1/2}$  we obtain
\beqq
\theta\in (0,1)\qquad\text{and}\qquad\frac{1}{p_2}=\frac{1-\theta}{p_1}+\frac{\theta}{2}.
\eeqq
Let $t_1=\frac{1}{p_1}-\frac{1}{p_2}$ and $t_2=\frac{1}{2}-\frac{1}{p_2}$. Then the condition $t>\frac{1}{p_1}-\frac{1}{p_2}$ implies
\beqq
(1-\theta) t_1+ \theta t_2 =0,\qquad t-t_1>0\qquad\text{and}\qquad t-t_2>\frac{1}{p_1}-\frac{1}{2}.
\eeqq
This yields
\beqq
[s^{t_1,\Omega}_{p_1,2}f, s^{t_2,\Omega}_{2,2}f ]_{\theta}=s^{0,\Omega}_{p_2,2}\,,
\eeqq
see \cite[Thm. 4.6]{Vy}. Here by $[X,Y]_{\theta}$, $\theta\in (0,1)$, we denote the classical complex interpolation method of Calder\'on, see \cite{BL,lun,t78} for details. Employing the lifting property, see \cite[Lemma 7.3]{KiSi}, results in Step 2 and Theorem \ref{seq-1} we conclude that
\beq\label{int1}
x_n(id : s^{t,\Omega}_{p_1,2}f\to s^{t_1,\Omega}_{p_1,2}f)
&\asymp& x_n(id: s^{t-t_1,\Omega}_{p_1,2}f\to s^{0,\Omega}_{p_1,2}f)\nonumber\\
&\asymp& n^{-t+t_1}(\log n)^{(d-1)(t-t_1)}
\eeq
and
\beq\label{int2}
x_n(id : s^{t,\Omega}_{p_1,2}f\to s^{t_2,\Omega}_{2,2}f)
&\asymp& x_n(id: s^{t-t_2,\Omega}_{p_1,2}f\to s^{0,\Omega}_{2,2}f)\nonumber\\
&\asymp& n^{-t+t_2}(\log n)^{(d-1)(t-t_2)}.
\eeq
The interpolation property of the Weyl numbers, see Proposition \ref{inter}, results in
\beqq
x_{2n-1}(id^*: s^{t,\Omega}_{p_1,2}f\to s^{0,\Omega}_{p_2,2}f)\lesssim x_n^{1-\theta}(id :  s^{t,\Omega}_{p_1,2}f\to s^{t_1,\Omega}_{p_1,2}f) \cdot x_n^{ \theta}(id^ : s^{t,\Omega}_{p_1,2}f\to s^{t_2,\Omega}_{2,2}f). 
\eeqq
Inserting \eqref{int1} and \eqref{int2} into this inequality we complete the proof.
\end{proof}

\begin{theorem}\label{seq-3}
Let either $2\leq p_2<p_1<\infty$, $t>\frac{{1}/{p_2}-{1}/{p_1}}{{p_1}/{2}-1}$ or $2< p_1\leq p_2< \infty$, $t>\frac{1}{p_1}-\frac{1}{p_2}$. Then
$$x_n(id^*)\asymp  n^{-t+\frac{1}{p_1}-\frac{1}{p_2}}(\log n)^{(d-1)(t-\frac{1}{p_1}+\frac{1}{p_2})}\, ,\ \ \ n\geq 2.$$
\end{theorem}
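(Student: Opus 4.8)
The plan is to follow the same scheme as in the proof of Theorem~\ref{seq-1}: prove the lower bound by reduction to a single block $id_\mu^*$ and the embedding $id_{p_1,p_2}^{D_\mu}$, and prove the upper bound by splitting $id^*=\sum_\mu id_\mu^*$ and balancing the block contributions via \eqref{sum}. For the lower bound, I would use Lemma~\ref{lowa} together with \eqref{ba4-2} (in the case $2<p_1$ with $p_1-\epsilon$ chosen close to $p_1$) or \eqref{ba4-3}: picking $n=[D_\mu/2]$ and invoking the relevant part of Lemma~\ref{Weyl1}, namely $x_n(id_{p_1,p_2}^{D_\mu})\asymp 1$ when $2\le p_1\le p_2$ (part (i), \eqref{wth1}) and $x_n(id_{p_1,p_2}^{D_\mu})\gtrsim 1$ via part (ii) when $2\le p_2<p_1$. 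Since $2^\mu\asymp n/(\log n)^{d-1}$ and $D_\mu\asymp \mu^{d-1}2^\mu$, this yields $x_n(id^*)\gtrsim 2^{\mu(-t+1/p_1-1/p_2)}\asymp n^{-t+1/p_1-1/p_2}(\log n)^{(d-1)(t-1/p_1+1/p_2)}$ after the usual passage from the subsequence to all $n$ using monotonicity and polynomial behaviour.

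For the upper bound I would again start from \eqref{sum}, choose $L>J$ so that the tail term $2^{-L(t-(1/p_1-1/p_2)_+)}$ is dominated by $2^{J(-t+1/p_1-1/p_2)}$, and then define intermediate dimensions $n_\mu=D_\mu 2^{(J-\mu)\lambda}\le D_\mu/2$ for $J+1\le\mu\le L$ with $\lambda>1$ chosen appropriately, so that $\sum_{\mu=J+1}^L n_\mu\asymp J^{d-1}2^J$. The block estimate should come from \eqref{ba4-3} combined with the appropriate entry of Lemma~\ref{Weyl1}: in the regime $2<p_1\le p_2$ one uses $x_{n_\mu}(id_{p_1,2}^{D_\mu})\asymp 1$ (again \eqref{wth1} with exponents $2\le p_1$), giving $x_{n_\mu}(id_\mu^*)\lesssim \mu^{(d-1)(1/p_1-1/2)}2^{\mu(-t+1/p_1-1/p_2)}$; in the regime $2\le p_2<p_1$ with $t$ above the threshold one uses Corollary~\ref{cor-weyl}, i.e. $x_{n_\mu}(id_\mu^*)\lesssim 2^{\mu(-t+1/p_1-1/p_2)}(D_\mu/n_\mu)^{1/r}$ with $1/r=(1/p_2-1/p_1)/(1-2/p_1)$. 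In either case, summing the geometric-type series over $\mu$ from $J+1$ to $L$ reduces to checking that the exponent in $2^{(J-\mu)\cdot(\text{something positive})}$ is genuinely negative, which is exactly where the smoothness hypothesis $t>(1/p_2-1/p_1)/(p_1/2-1)$ (resp. $t>1/p_1-1/p_2$) enters — it guarantees the series is summable and the bound $2^{J(-t+1/p_1-1/p_2)}$ is attained. Then $n\asymp 2^J J^{d-1}$ and the usual monotonicity-and-polynomiality argument finishes the estimate.

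The main obstacle will be handling the two sub-cases ($2\le p_2<p_1$ versus $2<p_1\le p_2$) uniformly, since they require different tools: the Weyl-number summing-norm estimate of Corollary~\ref{cor-weyl} in the first case and the trivial bound $x_n(id_{p_1,2}^{D_\mu})\asymp1$ in the second. In the first sub-case one must verify that $r$ is large enough and that the exponent $(J-\mu)$ coefficient, roughly $(t - 1/p_1 + 1/p_2) - (\text{rate of } (D_\mu/n_\mu)^{1/r})$, has the right sign precisely when $t$ exceeds the stated threshold; this is the delicate bookkeeping step. I also need to double-check the condition defining $\lambda$ so that simultaneously $n_\mu\le D_\mu/2$ (so that Lemma~\ref{Weyl1}(i) applies), $\sum n_\mu\asymp J^{d-1}2^J$, and the resulting geometric series converges — the feasibility of such a $\lambda$ is again equivalent to the smoothness restriction on $t$. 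Everything else (the lower bound, the tail estimate, the switch to $n\in\N$) is routine and parallels Theorems~\ref{seq-1} and \ref{seq-2}.
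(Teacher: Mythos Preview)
Your lower bound is essentially the paper's argument: Lemma~\ref{lowa} plus \eqref{ba4-2} with $n=[D_\mu/2]$ and Lemma~\ref{Weyl1}. (One small slip: \eqref{ba4-3} is an \emph{upper} bound for $\omega_n(id_\mu^*)$, so it cannot serve as an alternative for the lower estimate; only \eqref{ba4-2} works here. Also, for $x_n(id_{p_1,2}^{D_\mu})\asymp 1$ when $p_1>2$ the correct reference is Lemma~\ref{Weyl1}\,(ii), not \eqref{wth1}.)

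For the upper bound you take a genuinely different route from the paper. The paper does not repeat the splitting scheme \eqref{sum} at all: it simply factors
\[
s^{t,\Omega}_{p_1,2}f \;\hookrightarrow\; s^{t,\Omega}_{p_1,p_1}f \;\xrightarrow{\ id^2\ }\; s^{0,\Omega}_{p_2,2}f
\]
and quotes the already-known asymptotics $x_n(id^2)\asymp n^{-t+1/p_1-1/p_2}(\log n)^{(d-1)(t-1/p_1+1/p_2)}$ from \cite{KiSi} (the Besov-type case $q=p_1$). Your plan instead redoes the whole block-splitting computation directly on the $f$-scale, using \eqref{ba4-3} together with Lemma~\ref{Weyl1}\,(ii) when $2<p_1\le p_2$, and Corollary~\ref{cor-weyl} when $2\le p_2<p_1$. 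This is correct and self-contained: the feasibility of a $\lambda$ with $1<\lambda<r\big(t-\tfrac1{p_1}+\tfrac1{p_2}\big)$ is precisely equivalent to $t>\frac{1/p_2-1/p_1}{p_1/2-1}$, so the geometric sum collapses to $2^{J(-t+1/p_1-1/p_2)}$ as you anticipate, and the case $2<p_1\le p_2$ is even simpler since the block bound $x_{n_\mu}(id_\mu^*)\lesssim 2^{\mu(-t+1/p_1-1/p_2)}$ already sums. The trade-off is clear: the paper's argument is a two-line reduction but imports \cite{KiSi}, whereas yours is longer but stays within the tools developed in Section~\ref{sec-seq} (and in effect reproves the relevant piece of \cite{KiSi} via Corollary~\ref{cor-weyl}).
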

\begin{proof}
{\it Step 1. }Estimate from below. Since $2<p_1 $ we choose $\epsilon >0 $ such that $2\leq p_1-\epsilon$. Then Lemma \ref{lowa} and \eqref{ba4-2} with $p_2\geq 2$ yield 
\beqq
x_n(id^*)
&\gtrsim & 2^{\mu(-t+\frac{1}{p_1}-\frac{1}{p_2})}x_n(id_{p_1-\epsilon,p_2}^{D_{\mu}} ) .
\eeqq
Now \eqref{wth1} and Lemma \ref{Weyl1} (ii) with $n=\big[\frac{D_{\mu}}{2}\big]$ imply $x_n(id_{p_1-\epsilon,p_2}^{D_{\mu}} )\asymp 1 $, which results in
$$x_n(id^*)\gtrsim 2^{\mu(-t+\frac{1}{p_1}-\frac{1}{p_2})}.$$
Because of $2^{\mu}\asymp \frac{n}{(\log n)^{d-1}}$ the desired estimate follows.\\
{\it Step 2. }Estimate from above. We use the diagram
\tikzset{node distance=4cm, auto}

\begin{center}
\begin{tikzpicture}
 \node (H) {$s^{t,\Omega}_{p_1,2}f$};
 \node (L) [right of =H] {$s^{0,\Omega}_{p_2,2}f $};
 \node (L2) [right of =H, below of =H, node distance = 2cm ] {$ s^{t,\Omega}_{p_1,p_1}f$};
 \draw[->] (H) to node {$id^*$} (L);
 \draw[->] (H) to node [swap] {$id^1$} (L2);
 \draw[->] (L2) to node [swap]{$id^2$} (L);
 \end{tikzpicture}
\end{center}
By Property (c) of the $s$-numbers we find
\beqq
x_n(id^*)\leq \|id^1\|\cdot x_n(id^2).
\eeqq
In \cite{KiSi} it has been proved
\beqq
x_n(id^2)\asymp n^{-t+\frac{1}{p_1}-\frac{1}{p_2}}(\log n)^{(d-1)(t-\frac{1}{p_1}+\frac{1}{p_2})} \,,\ \ \ n\geq 2.
\eeqq
This finishes the proof.
\end{proof}
\begin{theorem}\label{seq-4}
Let either $2\leq p_2<p_1<\infty$, $0<t<\frac{{1}/{p_2}-{1}/{p_1}}{{p_1}/{2}-1}$ or $1\leq p_2\leq 2<p_1< \infty$,  $0<t<\frac{1}{p_1}$. Then we have
$$ x_n(id^*)\asymp n^{-\frac{tp_1}{2}}(\log n)^{(d-1)\frac{tp_1}{2}} \, ,\  \ n\geq 2.$$
\end{theorem}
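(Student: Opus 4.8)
The plan is to obtain matching lower and upper bounds on the subsequence $n=\big[\mu_*^{d-1}2^{2\mu_*/p_1}\big]$, $\mu_*\in\N$, and then extend to all $n\ge2$. Along this subsequence $\log n\asymp\mu_*$, hence $2^{\mu_*}\asymp\big(n/(\log n)^{d-1}\big)^{p_1/2}$, which turns $2^{-t\mu_*}$ into $n^{-tp_1/2}(\log n)^{(d-1)tp_1/2}$.

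\emph{Lower bound.} If $1\le p_2\le2<p_1$, combine Lemma \ref{lowa} with the Weyl-number extension of Lemma \ref{low} recorded in the remark after it to get $x_n(id^*)\ge x_n(id_{\mu_*}^*)\gtrsim 2^{-t\mu_*}$ for $n=\big[\mu_*^{d-1}2^{2\mu_*/p_1}\big]$. If $2\le p_2<p_1$, note that the canonical embedding $(s^{0,\Omega}_{p_2,2}f)_{\mu_*}\hookrightarrow(s^{0,\Omega}_{2,2}f)_{\mu_*}$ has norm bounded independently of $\mu_*$ (H\"older on a bounded set, since $p_2\ge2$); then $id_{\mu_*}^*:(s^{t,\Omega}_{p_1,2}f)_{\mu_*}\to(s^{0,\Omega}_{2,2}f)_{\mu_*}$ factors through $id_{\mu_*}^*:(s^{t,\Omega}_{p_1,2}f)_{\mu_*}\to(s^{0,\Omega}_{p_2,2}f)_{\mu_*}$, so property (c) together with the already-treated case $p_2=2<p_1$ gives $x_n(id^*)\ge x_n(id_{\mu_*}^*)\gtrsim 2^{-t\mu_*}$ on the same subsequence. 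Passing to the asymptotic scale gives the lower estimate for this subsequence, and the transition to arbitrary $n\ge2$ is by monotonicity and the polynomial behaviour, exactly as at the end of the proof of Theorem \ref{seq-1}.

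\emph{Upper bound.} Factoring $id^*:s^{t,\Omega}_{p_1,2}f\to s^{0,\Omega}_{p_2,2}f$ through $s^{0,\Omega}_{2,2}f$ (the embedding $s^{0,\Omega}_{2,2}f\hookrightarrow s^{0,\Omega}_{p_2,2}f$ being bounded for $p_2\le2$) reduces matters to $2\le p_2<p_1$. Put $1/r:=\frac{1/p_2-1/p_1}{1-2/p_1}$; the hypothesis on $t$ is precisely $tr<2/p_1$. Fix the target level $\mu_*$, set $n:=\big[\mu_*^{d-1}2^{2\mu_*/p_1}\big]$ and $L:=2\mu_*$, and split $id^*=\sum_{\mu=0}^{L}id_\mu^*+\sum_{\mu>L}id_\mu^*$, so that property (b) yields
\[
x_n(id^*)\ \le\ \sum_{\mu=0}^{L}x_{n_\mu}(id_\mu^*)+\sum_{\mu>L}\|id_\mu^*\|,\qquad n-1=\sum_{\mu=0}^{L}(n_\mu-1).
\]
Choosing $\kappa\in(tr,2/p_1)$, set for $0\le\mu\le\mu_*$
\[
n_\mu:=\min\Big(D_\mu+1,\ \max\big(1,\ \big[D_\mu\,2^{-\mu(1-2/p_1)}2^{(\mu_*-\mu)\kappa}\big]\big)\Big),
\]
and $n_\mu:=1$ for $\mu_*<\mu\le L$. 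For $\mu\le\mu_*$, Corollary \ref{cor-weyl} gives $x_{n_\mu}(id_\mu^*)\lesssim 2^{\mu(-t+1/p_1-1/p_2)}(D_\mu/n_\mu)^{1/r}\lesssim 2^{-t\mu}2^{-(\mu_*-\mu)\kappa/r}$, hence $\sum_{\mu\le\mu_*}x_{n_\mu}(id_\mu^*)\lesssim 2^{-t\mu_*}$ since $\kappa/r>t$. For $\mu_*<\mu\le L$ one has $x_1(id_\mu^*)=\|id_\mu^*\|\lesssim 2^{-t\mu}$, so $\sum_{\mu_*<\mu\le L}2^{-t\mu}\lesssim 2^{-t\mu_*}$, and the tail is $\sum_{\mu>L}\|id_\mu^*\|\lesssim 2^{-tL}\le 2^{-t\mu_*}$. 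On the rank side, $D_\mu\asymp\mu^{d-1}2^\mu$ and $\kappa<2/p_1$ give $\sum_{\mu\le\mu_*}n_\mu\lesssim\mu_*^{d-1}2^{2\mu_*/p_1}$, the $n_\mu=1$ blocks contributing only $L-\mu_*=\mu_*$; thus $n$ has the asserted size and $x_n(id^*)\lesssim 2^{-t\mu_*}\asymp n^{-tp_1/2}(\log n)^{(d-1)tp_1/2}$. The passage to all $n\ge2$ is again as in Theorem \ref{seq-1}.

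\emph{Main obstacle.} The delicate step is the allocation of the rank budget in the upper bound: one needs the exponent $tr$ governing how fast $\sum_\mu x_{n_\mu}(id_\mu^*)$ can be driven geometrically down to the single critical value $2^{-t\mu_*}$ to stay below the exponent $2/p_1$ governing the admissible growth of $\sum_\mu n_\mu$, and the interval $(tr,2/p_1)$ for $\kappa$ is non-empty exactly under the low-smoothness restriction on $t$. A minor technicality is that Corollary \ref{cor-weyl} requires $2\le p_2$, which forces the preliminary reduction of the case $p_2\le2$ to $p_2=2$.
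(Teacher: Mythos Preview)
Your proof is correct and follows the same overall strategy as the paper: the lower bound via Lemma~\ref{lowa} combined with Lemma~\ref{low} (and the factorisation through $p_2=2$ when $2\le p_2<p_1$), and the upper bound via the block decomposition together with Corollary~\ref{cor-weyl} (with the reduction of $p_2\le 2$ to $p_2=2$ by factoring through $s^{0,\Omega}_{2,2}f$). The only visible difference is the bookkeeping in the upper bound for $2\le p_2<p_1$: the paper parameterises by $J$, sets $L=[Jp_1/2]$, kills the levels $\mu\le J$ completely, and allocates $n_\mu=[D_\mu\,2^{(\mu-L)\beta+J-\mu}]$ on $(J,L]$ with $\beta>0$ chosen so that $-t+\tfrac{1}{p_1}-\tfrac{1}{p_2}+\tfrac{1}{r}-\tfrac{\beta}{r}>0$; you parameterise by $\mu_*$ (which plays the role of the paper's $L$), let the $\min$ with $D_\mu+1$ implicitly handle the ``kill'' range, and allocate $n_\mu\approx D_\mu\,2^{-\mu(1-2/p_1)+(\mu_*-\mu)\kappa}$ with $\kappa\in(tr,2/p_1)$. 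Both allocations are geometric and lead to the same bound $2^{-t\mu_*}\asymp 2^{-Jtp_1/2}$; your constraint $tr<\kappa<2/p_1$ and the paper's constraint on $\beta$ encode the same low-smoothness window.
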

\begin{proof}{\it Step 1. }Estimate from below. \\
{\it Substep 1.1.} The case $1\leq p_2\leq 2<p_1<\infty$ and $0<t<\frac{1}{p_1}$. From Lemmas \ref{lowa} and \ref{low}  we have
\beqq
x_n(id^*)\gtrsim 2^{-t\mu},\ \ n=\big[\mu^{(d-1)}2^{\frac{2\mu}{p_1}}\Big]\, .
\eeqq
Rewriting this in dependence of $n$ we get 
\beqq
x_n(id^*)\gtrsim n^{-\frac{tp_1}{2}}(\log n)^{(d-1)\frac{tp_1}{2}}.
\eeqq
{\it Substep 1.2.} The case $2\leq p_2<p_1<\infty$ and $0<t<\frac{{1}/{p_2}-{1}/{p_1}}{{p_1}/{2}-1}$. We consider the commutative diagram 
\tikzset{node distance=4cm, auto}
\beqq
\begin{tikzpicture}
 \node (H) {$s^{t,\Omega}_{p_1,2}f$};
 \node (L) [right of =H] {$s^{0,\Omega}_{2,2}f $};
 \node (L2) [right of =H, below of =H, node distance = 2cm ] {$ s^{0,\Omega}_{p_2,2}f$};
 \draw[->] (H) to node {$id^1$} (L);
 \draw[->] (H) to node [swap] {$id^*$} (L2);
 \draw[->] (L2) to node [swap]{$id^2$} (L);
 \end{tikzpicture}
\eeqq
Property (c) of the Weyl numbers  yields
\beqq
x_n(id^1)\leq x_n(id^*)\cdot \|id_2\|.
\eeqq
Applying the result in Substep 1.1 with $p_2=2$ we obtain the desired estimate.  \\
{\it Step 2.} Estimate from above. \\
{\it Substep 2.1.} The case $2\leq p_2<p_1<\infty$ and $0<t<\frac{{1}/{p_2}-{1}/{p_1}}{{p_1}/{2}-1}$. Choosing $L=\big[J\frac{p_1}{2}\big]$, we obtain from \eqref{sum}
 \begin{equation}\label{sum2}
 x_n(id^*)\lesssim \sum_{\mu=J+1}^{L}x_{n_{\mu}}(id_{\mu}^*)+ 2^{-t\big[J\frac{p_1}{2}\big]} .
 \end{equation}
Next we define
\[
n_{\mu}:= \big[D_{\mu}\, 2^{\{(\mu-L)\beta+J-\mu \}}\big]\leq D_{\mu}\, , \qquad J+1 \le \mu \le L\, ,
\]
where $\beta >0$ will be fixed later on. Hence
\be\label{ws-27}
\sum_{\mu=J+1}^{L} n_{\mu} \lesssim 2^{J}\, J^{d-1}\, .
\ee
Employing Corollary \ref{cor-weyl} we obtain
\beqq
x_{n_{\mu}}(id_{\mu}^*)&\lesssim& 2^{\mu(-t+\frac{1}{p_1}-\frac{1}{p_2})}\Big(\dfrac{D_{\mu}}{n_{\mu}}\Big)^{\frac{1}{r}}\\
 &\lesssim& 2^{\mu(-t+\frac{1}{p_1}-\frac{1}{p_2})}2^{-\frac{(\mu-L)\beta+J-\mu}{r}}.
\eeqq
Recall $\frac{1}{r}=\frac{1/p_2-1/p_1}{1-2/p_1}$. Then the sum in \eqref{sum2} is estimated by
\beqq
\sum_{\mu=J+1}^{L}x_{n_{\mu}}(id_{\mu}^*)
&\lesssim &    \sum_{\mu=J+1}^{L} 2^{\mu(-t+\frac{1}{p_1}-\frac{1}{p_2}+\frac{1}{r}-\frac{\beta}{r})}2^{\frac{L\beta-J}{r}}.
\eeqq
Observe that the condition $t < \frac{{1}/{p_2}-{1}/{p_1}}{{p_1}/{2}-1}$ implies 
\[ 
 -t+\frac{1}{p_1}-\frac{1}{p_2}+\frac{1}{r}>0\,.\ \ 
 \]
Because of  this we can choose $\beta>0$ such that $-t+\frac{1}{p_1}-\frac{1}{p_2}+\frac{1}{r}-\frac{\beta}{r}>0$. Consequently we obtain
\beqq
\sum_{\mu=J+1}^{L}x_{n_{\mu}}(id_{\mu}^*)
&\lesssim &  2^{L(-t+\frac{1}{p_1}-\frac{1}{p_2}+\frac{1}{r}-\frac{\beta}{r})}2^{\frac{L\beta-J}{r}}\\
&\lesssim &  2^{L(-t+\frac{1}{p_1}-\frac{1}{p_2}+\frac{1}{r})}2^{\frac{-J}{r}}\, .
\eeqq
Replacing $L$ by $\big[\frac{p_1}{2}J\big]$,  a simple calculation leads to
$$\sum_{\mu=J+1}^{L}x_{n_{\mu}}(id_{\mu}^*)\lesssim  2^{-\frac{tp_1}{2}J}\,.$$
Hence, taking \eqref{sum2} and \eqref{ws-27} into account we find
$$x_{c.2^JJ^{d-1}}(id^*)\lesssim 2^{-\frac{tp_1}{2}J}.$$
Now, employing  the same argument as at the end of the proof of Theorem \ref{seq-1}, the claim follows. \\
{\it Substep 2.2.} The case $1\leq p_2\leq 2<p_1<\infty$ and $0<t<\frac{1}{p_1}$.
This time we employ the diagram
\tikzset{node distance=4cm, auto}
\beqq
\begin{tikzpicture}
 \node (H) {$s^{t,\Omega}_{p_1,2}f$};
 \node (L) [right of =H] {$s^{0,\Omega}_{p_2,2}f $};
 \node (L2) [right of =H, below of =H, node distance = 2cm ] {$ s^{0,\Omega}_{2,2}f$};
 \draw[->] (H) to node {$id^*$} (L);
 \draw[->] (H) to node [swap] {$id^1$} (L2);
 \draw[->] (L2) to node [swap]{$id^2$} (L);
 \end{tikzpicture}
\eeqq
The inequality 
\beqq
x_n(id^*)\leq x_n(id^1)\cdot \|id^2\|,
\eeqq
and
\beqq
x_n(id^1)\lesssim n^{-\frac{tp_1}{2}}(\log n)^{(d-1)\frac{tp_1}{2}}\, ,\ \ \ \ t<\frac{{1}/{2}-{1}/{p_1}}{{p_1}/{2}-1}=\frac{1}{p_1},
\eeqq
see Substep 2.1, yield the desired estimate. The proof is complete.
\end{proof} 
\begin{theorem}\label{seq-5}
Let $1\leq p_2\leq 2<p_1<\infty$ and $t>\frac{1}{p_1}$. Then we have
$$x_n(id^*)\asymp n^{-t+\frac{1}{p_1}-\frac{1}{2}}(\log n)^{(d-1)(t-\frac{1}{p_1}+\frac{1}{2})}\, \ \ n\geq 2.$$
\end{theorem}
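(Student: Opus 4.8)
The plan is to prove the two matching bounds separately, using the $\mu$-level estimates of Section~\ref{sec-seq}. For the lower bound I would combine Lemma~\ref{lowa} with inequality \eqref{ba4-2}. Since $p_1>2$, fix $\epsilon>0$ with $p_1-\epsilon>2$; then \eqref{ba4-2} (in which $(\tfrac{1}{p_2}-\tfrac12)_+=\tfrac{1}{p_2}-\tfrac12$ because $p_2\le 2$) gives, for all $\mu$ and $n$,
\[
x_n(id^*)\ \gtrsim\ \mu^{-(d-1)(\frac{1}{p_2}-\frac12)}\,2^{\mu(-t+\frac{1}{p_1}-\frac{1}{p_2})}\,x_n\big(id_{p_1-\epsilon,p_2}^{D_\mu}\big)\,.
\]
Because $1\le p_2\le 2<p_1-\epsilon$, Lemma~\ref{Weyl1}(iii) yields $x_n(id_{p_1-\epsilon,p_2}^{D_\mu})\gtrsim D_\mu^{1/p_2-1/2}$ whenever $n\le D_\mu/2$. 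Taking $n=[D_\mu/2]$ and inserting $D_\mu\asymp\mu^{d-1}2^\mu$ from Lemma~\ref{ba1}, the powers of $\mu$ cancel and the exponential factors combine to $2^{\mu(-t+1/p_1-1/2)}$; the exponent is negative since $t>1/p_1>1/p_1-1/2$. Rewriting $2^\mu\asymp n/(\log n)^{d-1}$ then produces the claimed lower bound along the subsequence $n=[D_\mu/2]$, and the passage to all $n\ge 2$ is the monotonicity-plus-polynomial-growth argument from the end of the proof of Theorem~\ref{seq-1}.

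For the upper bound the key observation is that for $p_2\le 2$ the embedding $id_2:\ s^{0,\Omega}_{2,2}f\to s^{0,\Omega}_{p_2,2}f$ is bounded: under the wavelet isomorphism it is the inclusion $L_2(\Omega)\hookrightarrow L_{p_2}(\Omega)$, which is bounded because $\Omega$ is bounded (equivalently, the relevant wavelet sums are supported in a fixed bounded set, so H\"older's inequality applies). Factoring $id^*=id_2\circ id_1$ with $id_1:\ s^{t,\Omega}_{p_1,2}f\to s^{0,\Omega}_{2,2}f$, property~(c) of the Weyl numbers gives $x_n(id^*)\le\|id_2\|\,x_n(id_1)$. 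Now $id_1$ is the operator treated in Theorem~\ref{seq-3} with $p_2$ replaced by $2$: there the smoothness threshold $\frac{1/2-1/p_1}{p_1/2-1}$ equals $\frac{1}{p_1}$, so our hypothesis $t>\frac{1}{p_1}$ is exactly what that theorem requires, and it yields $x_n(id_1)\asymp n^{-t+1/p_1-1/2}(\log n)^{(d-1)(t-1/p_1+1/2)}$. This matches the lower bound and finishes the proof.

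I do not expect a genuine obstacle here. The two points that need a word of care are the choice of $\epsilon$ (so that $p_1-\epsilon>2$ and the correct case of Lemma~\ref{Weyl1} applies in the lower bound) and the boundedness of $id_2$ in the upper bound; the remainder is bookkeeping with the abstract $s$-number properties and Lemmas~\ref{ba1}, \ref{ba2}, \ref{ba4}. Note also that the alternative lower bound $x_n(id^*)\gtrsim 2^{-t\mu}$ at $n\asymp\mu^{d-1}2^{2\mu/p_1}$ coming from Lemma~\ref{low} is weaker in the present regime $t>1/p_1$ (it would give the exponent $tp_1/2>t-1/p_1+1/2$), which is why the argument via Lemma~\ref{Weyl1}(iii) is needed.
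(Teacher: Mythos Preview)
Your proof is correct and follows essentially the same route as the paper: the lower bound via Lemma~\ref{lowa}, inequality~\eqref{ba4-2} with $p_1-\epsilon>2$, and Lemma~\ref{Weyl1}(iii) at $n=[D_\mu/2]$, and the upper bound via the factorization $s^{t,\Omega}_{p_1,2}f\to s^{0,\Omega}_{2,2}f\to s^{0,\Omega}_{p_2,2}f$ combined with Theorem~\ref{seq-3} (with $p_2$ replaced by $2$), exactly as the paper does by invoking ``the argument given in Substep~2.2 of the proof of Theorem~\ref{seq-4}''. Your additional remark that Lemma~\ref{low} yields only the weaker exponent $tp_1/2$ in the regime $t>1/p_1$ is a helpful clarification not spelled out in the paper.
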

\begin{proof}
{\it Step 1.} Estimate from below. Because $2<p_1$ we choose $\epsilon >0 $ such that $2< p_1-\epsilon$. Then Lemma \ref{lowa} and \eqref{ba4-2} yield
\beqq
x_n(id^*)&\gtrsim&\mu^{(d-1)(\frac{1}{2}-\frac{1}{p_2})}2^{\mu(-t+\frac{1}{p_1}-\frac{1}{p_2})}x_n(id_{p_1-\epsilon,p_2}^{D_{\mu}} )
\eeqq
Employing Lemma \ref{Weyl1} (iii) with $n=\big[\dfrac{D_{\mu}}{2}\big]$ we have found
\beqq
x_n(id^*)&\gtrsim&  \mu^{(d-1)(\frac{1}{2}-\frac{1}{p_2})}2^{\mu(-t+\frac{1}{p_1}-\frac{1}{p_2})}D_{\mu}^{\frac{1}{p_2}-\frac{1}{2}}\\
 &\asymp& 2^{\mu(-t+\frac{1}{p_1}-\frac{1}{2})}.
\eeqq
Because of $2^{\mu}\asymp\frac{n}{(\log n)^{d-1}}$ this implies the desired estimate. \\
{\it Step 2. }Estimate from above. The claim follows from the  results in Theorem \ref{seq-3} and the argument given in Substep 2.2 of the proof of Theorem \ref{seq-4}.
\end{proof}
\subsection{The results for Bernstein numbers}
Let us recall the behaviour of entropy numbers of the embeddings $ id^*: s_{p_1,2}^{t,\Omega}f \to s_{p_2,2}^{0,\Omega}f$, see \cite[Theorem 4.11]{Vy}. 
\begin{proposition}\label{entro} Let $1\leq p_1,p_2<\infty$ and $t>(\frac{1}{p_1}-\frac{1}{p_2})_+$. Then we have
\beqq
e_n(id^*)\asymp n^{-t}(\log n)^{(d-1)t}\, ,\ \ \ \ n\geq 2.
\eeqq
\end{proposition}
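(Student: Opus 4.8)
The plan is to argue entirely in the sequence spaces $s^{t,\Omega}_{p_1,2}f$ and $s^{0,\Omega}_{p_2,2}f$ and to exploit the block decomposition $id^*=\sum_{\mu=0}^\infty id_\mu^*$ already set up in Section \ref{sec-seq}. By Lemma \ref{ba1} the $\mu$-th block carries $D_\mu\asymp\mu^{d-1}2^\mu$ coordinates, and by Lemma \ref{ba2} together with the $F$-versus-$B$ comparison of building blocks, $id_\mu^*$ is, up to factors that are powers of $\mu$ and the scalar $2^{\mu(-t+(1/p_1-1/p_2)_+)}$, an identity of the type $id^{D_\mu}_{p_1,p_2}$ between finite-dimensional $\ell_p$-spaces. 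The only external input is the classical description of $e_k(id^m_{p_1,p_2})$ due to Sch\"utt (and its refinements by Edmunds--Triebel, K\"uhn): $e_k\asymp 1$ for $k\lesssim\log m$, $e_k\asymp(k^{-1}\log(1+m/k))^{1/p_1-1/p_2}$ for $\log m\lesssim k\lesssim m$, and $e_k\asymp 2^{-k/m}m^{1/p_2-1/p_1}$ for $k\gtrsim m$, with the obvious modification when $p_1>p_2$.

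For the upper bound I would use the subadditivity $e_{1+\sum_{\mu\le L}(n_\mu-1)}\big(\sum_{\mu\le L}id_\mu^*\big)\le\sum_{\mu\le L}e_{n_\mu}(id_\mu^*)$ of entropy numbers together with the tail estimate $\sum_{\mu>L}\|id_\mu^*\|\lesssim 2^{-L(t-(1/p_1-1/p_2)_+)}$ coming from Lemma \ref{ba2}. Choosing $n_\mu$ to decay geometrically away from the top level $L$ (exactly as in the proofs of Theorems \ref{seq-1}--\ref{seq-5}, so that $\sum_{\mu\le L}n_\mu\asymp L^{d-1}2^L$) and inserting Sch\"utt's estimates, the geometric sums collapse to their dominant term near $\mu\asymp L$ and one obtains $e_n(id^*)\lesssim 2^{-Lt}$ whenever $n\asymp L^{d-1}2^L$; the monotonicity of entropy numbers and the polynomial--logarithmic scale then transfer this to $e_n(id^*)\lesssim n^{-t}(\log n)^{(d-1)t}$ for all $n\ge 2$. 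The crucial bookkeeping point is that the powers of $\mu$ produced by Lemma \ref{ba2} and by the $F$--$B$ comparison cancel exactly against the powers of $\mu$ hidden in $D_\mu^{1/p_2-1/p_1}$ (compare the analogous cancellation in Step 1 of the proof of Theorem \ref{seq-2}); this is what forces the exponent of $\log n$ to be precisely $(d-1)t$.

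For the lower bound I would keep a single block. By the ideal property of entropy numbers applied to the same diagram as in Lemma \ref{lowa}, $e_n(id^*)\ge e_n\big(id_L^*:(s^{t,\Omega}_{p_1,2}f)_L\to(s^{0,\Omega}_{p_2,2}f)_L\big)$ for every $L$. Passing to the mixed-norm blocks via the $F$--$B$ comparison, I would bound the right-hand side below either by a Lebesgue-volume argument on the $D_L$-dimensional block (the volume radii of the unit balls of the source and target blocks differ, after dividing out the scalars $2^{L(t-1/p_1)}$ and $2^{-L/p_2}$, only by the common factor $\asymp L^{-(d-1)/2}2^{-L/p_\cdot}$, which cancels in the ratio) or by extracting a full-dimensional identity with $k\asymp D_L$ and applying Sch\"utt's lower bound. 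Either way one gets $e_n(id_L^*)\gtrsim 2^{-Lt}$ for $n\le c\,D_L$, and taking $n\asymp D_L\asymp L^{d-1}2^L$, hence $2^L\asymp n/(\log n)^{d-1}$, yields $e_n(id^*)\gtrsim n^{-t}(\log n)^{(d-1)t}$ along a subsequence, which monotonicity fills in to all $n$.

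The main obstacle, as is typical for entropy numbers (in contrast with Weyl numbers, where the interpolation property of Proposition \ref{inter} carries much of the load), is the lower bound: one must deal with the genuine $F$-block, which is a mixed $\ell_2$-over-$\bar\nu$, $\ell_{p_1}$-over-$\bar m$ body rather than a plain $\ell_{p_1}^{D_L}$-ball, and verify that the logarithmic factors arising from the comparison with the $B$-block cancel exactly, so that the power of $\log n$ comes out as $(d-1)t$ and no larger. On the upper-bound side the delicate point is the same cancellation, together with tuning the geometric decay rate of the $n_\mu$ so that $\sum_\mu\mu^{c(d-1)}2^{\mu(\,\cdot\,)}$ does not acquire a spurious extra power of $\log n$.
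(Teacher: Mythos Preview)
The paper does not give its own proof of this proposition at all: it is stated as a recall, with the sentence ``Let us recall the behaviour of entropy numbers of the embeddings $id^*: s_{p_1,2}^{t,\Omega}f \to s_{p_2,2}^{0,\Omega}f$, see \cite[Theorem 4.11]{Vy}'' immediately preceding it. So there is nothing to compare your argument against inside the present paper; the actual proof lives in Vybiral's monograph \cite{Vy}.

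That said, the route you outline---splitting $id^*=\sum_\mu id_\mu^*$, allocating $n_\mu$ geometrically, feeding in Sch\"utt's entropy estimates for $id^m_{p_1,p_2}$, and testing the lower bound on a single full block via a volume argument---is precisely the standard scheme Vybiral uses (and the one employed for entropy numbers throughout the dominating-mixed-smoothness literature, cf.\ \cite{Be,DD,Tem2}). Your identification of the delicate point is accurate: the $F$-block $(s^{t,\Omega}_{p,2}f)_\mu$ is a genuine $L_p(\ell_2)$-body, not $\ell_p^{D_\mu}$, so the na\"ive comparison with $B$-blocks via Lemma~\ref{ba2} costs powers of $\mu$ that would spoil the $(d-1)t$ exponent of the logarithm. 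Vybiral handles this not by hoping for cancellation but by working directly with the Besov-type sequence spaces $s^{t,\Omega}_{p,q}b$ (where Lemma~\ref{ba1}(ii) gives an exact identification with weighted $\ell_p$-sums) and using that $s^{t,\Omega}_{p,2}f$ sits between $s^{t,\Omega}_{p,\min(p,2)}b$ and $s^{t,\Omega}_{p,\max(p,2)}b$; since the entropy-number result for the $B$-scale is independent of $q$, the sandwich transfers cleanly. Your volume argument for the lower bound is also viable and in fact avoids the $\mu$-power issue altogether, because volume radii of the $F$- and $B$-block unit balls coincide up to absolute constants.
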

Now we are in position to prove the results for Bernstein numbers.
\begin{theorem}\label{conclud2}
Let $1< p_1,p_2< \infty $ and $ t>(\frac{1}{p_1}-\frac{1}{p_2})_+$. Then we have
$$b_n(id^*)\asymp n^{-\beta}(\log n)^{(d-1)\beta}\,,\ \ \ n\geq 2\,,$$
where $\beta$ given in Theorem \ref{main2}.
\end{theorem}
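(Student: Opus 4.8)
The strategy is to combine the already-established polynomial behaviour of the Weyl numbers (Theorems \ref{seq-1}--\ref{seq-5}) with the entropy number asymptotics in Proposition \ref{entro}, and to match this against sharp lower bounds coming from the building-block estimates of Lemma \ref{ba4}, Lemma \ref{low} and the finite-dimensional Bernstein number estimates of Lemma \ref{Bern1}. Concretely, the upper bound in every case follows at once from \eqref{bnxnen}: since $x_n(id^*)\asymp n^{-\alpha}(\log n)^{(d-1)\alpha}$ with $\alpha$ as in Theorem \ref{main1}, Lemma \ref{bern-weyl0} gives $b_n(id^*)\lesssim x_n(id^*)$, and Proposition \ref{entro} together with Lemma \ref{bern-en} gives $b_n(id^*)\lesssim e_n(id^*)\asymp n^{-t}(\log n)^{(d-1)t}$. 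Taking the minimum of these two reproduces exactly the exponent $\beta$ listed in Theorem \ref{main2}: in cases (i) one has $\alpha\ge t$ so the entropy bound wins and $\beta=t$; in cases (ii), (iii), (iv) one has $\alpha<t$ and the Weyl bound wins, so $\beta=\alpha$ equals $t-\frac1{p_1}+\frac12$, $t-\frac1{p_1}+\frac1{p_2}$, $\frac{tp_1}{2}$ respectively. So the whole content of the theorem is in the \emph{lower} bounds.

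For the lower bounds I would go case by case. In case (i) with $p_1\le p_2$ or $p_2\le p_1\le 2$: here I use Lemma \ref{lowa} to reduce to a single block $id_\mu^*$, then apply \eqref{ba4-1} (when $p_1\le 2$) or \eqref{ba4-2} with a small $\epsilon$ (when $p_1>2$, which forces $p_1\le p_2$ here) together with the finite-dimensional lower bounds \eqref{th1}--\eqref{th3} of Lemma \ref{Bern1}(i) evaluated at $n=2^\mu\le D_\mu/2$ (up to the log factor); a bookkeeping of the powers of $\mu$ and $2^\mu$ then yields $b_n(id^*)\gtrsim 2^{-t\mu}\asymp n^{-t}(\log n)^{(d-1)t}$ after passing from the subsequence $n\asymp 2^\mu\mu^{d-1}$ to general $n$ by monotonicity and polynomial behaviour, exactly as at the end of the proof of Theorem \ref{seq-1}. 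In case (ii), $p_2\le 2<p_1$, $t>\frac1{p_1}$: I use Lemma \ref{lowa}, then \eqref{ba4-2} with $2<p_1-\epsilon$, and Lemma \ref{Bern1}(ii) at $n\asymp D_\mu^{2/p_1}$, mirroring Step 1 of the proof of Theorem \ref{seq-5} but with $b_n$ in place of $x_n$ (Lemma \ref{ba4} and Lemma \ref{lowa} hold for $\omega_n=b_n$ as well). In case (iii), $2\le p_2<p_1$, $t>\frac{1/p_2-1/p_1}{p_1/2-1}$: same scheme via \eqref{ba4-2} and Lemma \ref{Bern1}(ii), now at $n=[D_\mu^{2/p_1}]$, giving $b_n(id^*)\gtrsim 2^{\mu(-t+1/p_1-1/p_2)}$. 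In case (iv) (low smoothness), I invoke Lemma \ref{low} directly — its Step 1 is stated for Bernstein numbers — together with Lemma \ref{lowa}, yielding $b_n(id^*)\gtrsim 2^{-t\mu}$ at $n=[\mu^{d-1}2^{2\mu/p_1}]$, which rewrites as $n^{-tp_1/2}(\log n)^{(d-1)tp_1/2}$.

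The main obstacle is purely organisational rather than deep: one must check that in each parameter region the chosen block index $n$ stays within the range where the relevant finite-dimensional lower bound of Lemma \ref{Bern1} is valid (e.g. $1\le n\le[m^{2/p_1}]$ in part (ii)), and that the logarithmic factors bookkept through $D_\mu\asymp\mu^{d-1}2^\mu$ and the relation $2^\mu\asymp n/(\log n)^{d-1}$ combine correctly with the $\mu^{(d-1)(\cdot)}$ factors appearing in \eqref{ba4-1}--\eqref{ba4-2}. A secondary point to verify is the transition from the sparse subsequence $n\asymp\mu^{d-1}2^\mu$ (or $\mu^{d-1}2^{2\mu/p_1}$) to all $n\in\N$; this is legitimate because $b_n(id^*)$ is monotone in $n$ and, being squeezed between two polynomially decaying sequences, itself behaves polynomially, so filling the gaps costs only a constant, exactly the argument already used repeatedly in Section \ref{sec-seq}. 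Once the lower bounds are in place, they match the upper bounds from the previous paragraph in every case, and Theorem \ref{conclud2} follows; Theorem \ref{main2} is then obtained from Theorem \ref{conclud2} by the wavelet isomorphism of Lemma \ref{wavelet} and the reduction of function-space embeddings to sequence-space embeddings carried out in Section \ref{sec-proof}.
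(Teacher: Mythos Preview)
Your overall architecture (upper bounds via \eqref{bnxnen}, lower bounds block-by-block through Lemma~\ref{lowa}) is exactly the paper's, and your treatment of cases (ii)--(iv) is essentially right in spirit, though in (ii) and (iii) you should be using \eqref{th2} and \eqref{th1} of Lemma~\ref{Bern1}(i) at $n=[D_\mu/2]$ rather than Lemma~\ref{Bern1}(ii); the latter re-introduces the index $p_1-\epsilon$ into the exponent of $D_\mu$ and produces an avoidable $\epsilon$-loss, whereas \eqref{th1}, \eqref{th2} give bounds independent of the source index and combine with \eqref{ba4-2} without loss.

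There is, however, a genuine gap in your case~(i) when $p_1\le p_2$ and $p_2>2$. If $p_1\le 2<p_2$ you route through \eqref{ba4-1} and \eqref{th3}: since $(1/p_2-1/2)_+=0$ in \eqref{ba4-1} while \eqref{th3} only yields $b_n(id_{2,p_2}^{D_\mu})\gtrsim D_\mu^{1/p_2-1/2}$, the extra factor $(\mu^{d-1})^{1/p_2-1/2}$ does \emph{not} cancel and you end up with $b_n(id^*)\gtrsim n^{-t}(\log n)^{(d-1)(t+1/p_2-1/2)}$, short of the target by a logarithmic factor. If $2<p_1\le p_2$ you are forced to use \eqref{ba4-2}, and the mismatch between the prefactor $2^{\mu(1/p_1-1/p_2)}$ and the bound $D_\mu^{1/p_2-1/(p_1-\epsilon)}$ from \eqref{th3} leaves a residual $2^{\mu(1/p_1-1/(p_1-\epsilon))}$, i.e.\ a polynomial loss $n^{-c\epsilon}$ that no limiting argument removes. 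The finite-dimensional lemmas simply do not deliver the sharp constant here.

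The paper sidesteps this entirely: for $p_1\le p_2$ it takes $n=D_\mu$ (the full dimension of the block), so that
\[
b_{D_\mu}(id_\mu^*)=\inf_{\lambda\ne 0}\frac{\|\lambda\,|\,(s^{0,\Omega}_{p_2,2}f)_\mu\|}{\|\lambda\,|\,(s^{t,\Omega}_{p_1,2}f)_\mu\|},
\]
and then bounds the denominator directly by Lemma~\ref{ba2}(i), which for $p_1\le p_2$ gives $\|\lambda\,|\,(s^{t,\Omega}_{p_1,2}f)_\mu\|\lesssim 2^{t\mu}\|\lambda\,|\,(s^{0,\Omega}_{p_2,2}f)_\mu\|$ with no $\epsilon$ and no stray $\mu$-power. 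This yields $b_{D_\mu}(id^*)\gtrsim 2^{-t\mu}$ cleanly, and the standard conversion $D_\mu\asymp\mu^{d-1}2^\mu$ gives the correct $n^{-t}(\log n)^{(d-1)t}$. You should replace your case~(i) argument for $p_1\le p_2$ by this direct computation.
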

\begin{proof}
{\it Step 1.} Estimate from below. The lower estimates in the cases of high smoothness were carried out by Galeev \cite{Gale1}. However, for a better readability we give a proof here. We divide this step into some cases.
\begin{enumerate}
\item The case $p_1\leq p_2$. We have
 \beq\label{low00}
  b_{D_{\mu}}(id_{\mu}^*: (s^{t,\Omega}_{p_1,2}f)_{\mu}\to (s^{0,\Omega}_{p_2,2}f)_{\mu})
  &=& \inf_{\lambda\in (s^{t,\Omega}_{p_1,2}f)_{\mu}, \lambda\not=0}\frac{\|\lambda|(s^{0,\Omega}_{p_2,2}f)_{\mu} \|}{\|\lambda|(s^{t,\Omega}_{p_1,2}f)_{\mu} \| }.
   \eeq
  Since $p_1\leq p_2$, Lemma \ref{ba2} (i) yields
   \beqq
   \|\lambda|(s^{t,\Omega}_{p_1,2}f)_{\mu} \| \lesssim 2^{t\mu} \|\lambda|(s^{0,\Omega}_{p_2,2}f)_{\mu} \| 
   \eeqq
   for all $\lambda\in (s^{t,\Omega}_{p_1,2}f)_{\mu}$. Inserting this into \eqref{low00} we find
   \beqq
   b_{D_{\mu}}(id^*)\gtrsim 2^{-t\mu}.
   \eeqq
   Now rewriting this in dependence on $n$ we have found the desired estimate.
\item The case $p_2\leq p_1\leq 2$. The proof is similar to Step 1 in the proof of Theorem \ref{seq-2}. Since $p_1,p_2\leq 2$, Lemma \ref{lowa} and \eqref{ba4-1} result in
$$b_n(id^*) \gtrsim \mu^{(d-1)(\frac{1}{2}-\frac{1}{p_2})}2^{\mu(-t+\frac{1}{2}-\frac{1}{p_2})}b_n(id_{2,p_2}^{D_{\mu}} ). $$
By choosing $n=\big[\dfrac{D_{\mu}}{2}\big]$, applying \eqref{th3}, we obtain
\beqq
b_n(id^*) 
&\gtrsim & \mu^{(d-1)(\frac{1}{2}-\frac{1}{p_2})}2^{\mu(-t+\frac{1}{2}-\frac{1}{p_2})}(D_{\mu})^{\frac{1}{p_2}-\frac{1}{2}}\\
&\asymp & 2^{\mu(-t)}.
\eeqq
This implies the estimate from blow.
\item The case $p_2\leq 2< p_1$, $t>\frac{1}{p_1}$. To obtain the lower estimate in this case  we combine Lemma \ref{lowa} with \eqref{ba4-2} and \eqref{th2}. The argument is similar to Step 1 in the proof of Theorem \ref{seq-5}.
\item The case $2\leq p_2<p_1$, $t>\frac{1/p_2-1/p_1}{p_1/2-1}$. This time we use  Lemma \ref{lowa}, \eqref{ba4-2} and \eqref{th1}. We follow the arguments as in Step 1 of the proof of Theorem \ref{seq-3} to obtain the desired estimate.
\item The cases $2\leq p_2<p_1$, $t<\frac{{1}/{p_2}-{1}/{p_1}}{{p_1}/{2}-1}$ and $p_2\leq 2<p_1 $,  $t<\frac{1}{p_1}$ can be treated as in  Step 1 in the proof of Theorem \ref{seq-4}, see Lemmas \ref{lowa} and \ref{low}.

\end{enumerate}
{\it Step 2.} Estimate from above. The polynomial behaviour of the Weyl numbers of the embedding $ id^*: s_{p_1,2}^{t,\Omega}f \to s_{p_2,2}^{0,\Omega}f$ together with Theorems \ref{seq-3}$-$\ref{seq-5} and Lemma \ref{bern-weyl0}
results in the upper estimate in the cases $\max(2,p_2)<p_1$, i.e., (ii), (iii), (iv). The upper bound in the other cases are obtained by applying Proposition \ref{entro} and Lemma \ref{bern-en}. The proof is complete.
\end{proof}
\section{Proofs}\label{sec-proof}
The following lemma  allows  us to shift the results obtained for the sequence spaces, to the situation of function spaces. Recall that $\omega_n$ is either $x_n$ or $b_n$.
\begin{lemma}\label{weyl}
Let $1< p_1<\infty $,\ $1\leq p_2<\infty$ and $t\in \re$. Then
\[
\omega_n( id^*: s_{p_1,2}^{t,\Omega}f \to s_{p_2,2}^{0,\Omega}f)\asymp \omega_n\big(id: S_{p_1,2}^t F(\Omega)\to S_{p_2,2}^0 F(\Omega)\big)
\]
holds for all $n \in \N$.
\end{lemma}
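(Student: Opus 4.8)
The equivalence will follow once we write $id^{*}$ as a product $B\,(id)\,A$ of the embedding $id:S^{t}_{p_{1},2}F(\Omega)\to S^{0}_{p_{2},2}F(\Omega)$ with fixed bounded linear operators $A,B$, and, symmetrically, $id$ as $B'\,(id^{*})\,A'$. Since the Weyl numbers \emph{and} the Bernstein numbers satisfy property (c) from the definition of $s$-numbers, such factorizations give $\omega_{n}(id^{*})\le\|B\|\|A\|\,\omega_{n}(id)$ and $\omega_{n}(id)\le\|B'\|\|A'\|\,\omega_{n}(id^{*})$ for all $n$, i.e.\ $\omega_{n}(id^{*})\asymp\omega_{n}(id)$ with constants depending only on the parameters; this is exactly why the statement can be phrased for $\omega_{n}$. (If $t$ is so small that $id$, and hence $id^{*}$, is unbounded, both sides are infinite; so assume $t$ is such that both operators are bounded, as is the case in all later applications.)

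\textbf{A localized wavelet isomorphism.} Let $\mathcal{S}_{\Omega}$ be the synthesis operator $\mathcal{S}_{\Omega}\lambda:=\big(\sum_{\bar\nu}\sum_{\bar m\in A_{\bar\nu}^{\Omega}}\lambda_{\bar\nu,\bar m}\,\Psi_{\bar\nu,\bar m}\big)\big|_{\Omega}$. The crucial observation is that $\mathcal{S}_{\Omega}$ is an isomorphism of $s_{p,q}^{t,\Omega}f$ onto $S^{t}_{p,q}F(\Omega)$. It is bounded: applying Lemma \ref{wavelet} to the $\R$-function $\sum_{\bar\nu}\sum_{\bar m\in A_{\bar\nu}^{\Omega}}\lambda_{\bar\nu,\bar m}\Psi_{\bar\nu,\bar m}$, whose wavelet coefficient sequence is $\lambda$ (extended by zero), gives $\|\mathcal{S}_{\Omega}\lambda|S^{t}_{p,q}F(\Omega)\|\lesssim\|\lambda|s_{p,q}^{t,\Omega}f\|$. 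It is onto: this is precisely the content of the construction of the compactly supported $g$ with $g|_{\Omega}=f$ and $\|g\|\lesssim\|f\|$ carried out (from a good extension $\ce f$) right before the definition of $s_{p,q}^{t,\Omega}f$. It is injective, because the restricted wavelet system $\{\Psi_{\bar\nu,\bar m}|_{\Omega}:\bar\nu\in\N_{0}^{d},\ \bar m\in A_{\bar\nu}^{\Omega}\}$ is linearly independent in $D'(\Omega)$ (see Vybiral \cite{Vy} and Hansen \cite{Hansen}). By the open mapping theorem $\mathcal{S}_{\Omega}$ is an isomorphism; set $\mathcal{W}_{\Omega}:=\mathcal{S}_{\Omega}^{-1}$. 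Note that the formula defining $\mathcal{S}_{\Omega}$ involves neither $t$ nor $p$ nor $q$, so the synthesis operators attached to the source and the target spaces agree on the dense set of finitely supported sequences.

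\textbf{The factorizations.} Denote by $\mathcal{S}_{\Omega}^{t,p_{1}},\mathcal{W}_{\Omega}^{t,p_{1}}$ the isomorphisms between $s_{p_{1},2}^{t,\Omega}f$ and $S^{t}_{p_{1},2}F(\Omega)$, and by $\mathcal{S}_{\Omega}^{0,p_{2}},\mathcal{W}_{\Omega}^{0,p_{2}}$ those between $s_{p_{2},2}^{0,\Omega}f$ and $S^{0}_{p_{2},2}F(\Omega)$. Using that these synthesis maps coincide on finitely supported sequences one checks, for such $\lambda$ and hence by density (both composites below being bounded) for all $\lambda$, that $\mathcal{W}_{\Omega}^{0,p_{2}}\big(id\,(\mathcal{S}_{\Omega}^{t,p_{1}}\lambda)\big)=\lambda$, and for $f\in S^{t}_{p_{1},2}F(\Omega)$ that $\mathcal{S}_{\Omega}^{0,p_{2}}\big(id^{*}(\mathcal{W}_{\Omega}^{t,p_{1}}f)\big)=f$. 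In other words
\[
id^{*}=\mathcal{W}_{\Omega}^{0,p_{2}}\circ\big(id:S^{t}_{p_{1},2}F(\Omega)\to S^{0}_{p_{2},2}F(\Omega)\big)\circ\mathcal{S}_{\Omega}^{t,p_{1}},\qquad id=\mathcal{S}_{\Omega}^{0,p_{2}}\circ id^{*}\circ\mathcal{W}_{\Omega}^{t,p_{1}},
\]
and property (c) of $\omega_{n}$ applied to these two identities yields the asserted two-sided estimate. \textbf{The main obstacle} is the injectivity of $\mathcal{S}_{\Omega}$, i.e.\ the linear independence of the restricted wavelet system: boundedness and surjectivity are immediate from the wavelet machinery already set up, and they alone give the retract identity $\mathcal{S}_{\Omega}\mathcal{W}_{\Omega}=id$ and hence the second factorization (so $\omega_{n}(id)\lesssim\omega_{n}(id^{*})$ comes for free), whereas the reverse inequality $\omega_{n}(id^{*})\lesssim\omega_{n}(id)$ rests on the first factorization, which collapses if $\mathcal{S}_{\Omega}$ is merely a quotient map rather than a genuine isomorphism.
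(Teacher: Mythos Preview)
Your argument is correct and is essentially a repackaging of the paper's proof. The paper factors through the $\R$-spaces: it builds a linear tensor-product extension operator $\ce_d:S^{t}_{p_1,2}F(\Omega)\to S^{t}_{p_1,2}F(\R)$ with range supported where the relevant wavelets live, and then runs the commutative diagram
\[
\begin{CD}
S^{t}_{p_1,2}F(\Omega) @>\ce_d>> S^{t}_{p_1,2}F(\R) @>\mathcal{W}>> s^{t,\Omega}_{p_1,2}f\\
@V id VV @. @VV id^* V\\
S^{0}_{p_2,2}F(\Omega) @<R_\Omega<< S^{0}_{p_2,2}F(\R) @<\mathcal{W}^*<< s^{0,\Omega}_{p_2,2}f
\end{CD}
\]
to get $\omega_n(id)\lesssim\omega_n(id^*)$, and says that ``a slightly modified diagram'' gives the converse. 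Your map $\mathcal{S}_\Omega$ is exactly $R_\Omega\circ\mathcal{W}^*$ in the paper's notation, and your $\mathcal{W}_\Omega$ plays the role of $\mathcal{W}\circ\ce_d$; you have simply collapsed the two-step factorizations through $\R$ into single domain-level isomorphisms. What you gain is a cleaner statement of what is actually needed: the injectivity of $\mathcal{S}_\Omega$ (linear independence of the restricted wavelets) is precisely the point the paper's ``modified diagram'' sweeps under the rug, and you are right that the inequality $\omega_n(id^*)\lesssim\omega_n(id)$ hinges on it. What the paper's version buys is that the extension operator $\ce_d$ is constructed explicitly from one-dimensional data, so one sees concretely why a bounded linear right inverse to $\mathcal{S}_\Omega$ exists, rather than invoking the open mapping theorem after citing injectivity from \cite{Vy,Hansen}.
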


\begin{proof}We use the argument given in \cite{KiSi}, see also Vybiral \cite{Vy}.\\
{\em Step 1.} Let $E: F^{t}_{p_1,2} (0,1) \to F^{t}_{p_1,2} (\re) $ denote a linear and continuous extension operator. Here $F^{t}_{p_1,2}(\re)$ and $F^{t}_{p_1,2} (0,1)$ are the Lizorkin-Triebel spaces defined on $\re$ and on $(0,1)$. For existence of those operators we refer  to \cite[3.3.4]{Tr83} or \cite{Ry}. Without loss of generality we may assume that 
\[
\supp Ef \subset \bigcup_{{\bar{\nu} \in \N_0^d}, \, \,   \bar{m}\in A_{\bar{\nu}}^\Omega }  \supp \Psi_{\bar{\nu},\bar{m}}\, , 
\] 
see Subsection \ref{seq-def}, for all $f\in F^{t}_{p_1,2} (0,1)$.
By $
\ce_d := E \otimes \ldots \otimes E 
$
we denote the $d$-fold tensor product operator which maps the space $F^{t}_{p_1,2} (0,1) \otimes_{\alpha_{p}} \ldots \otimes_{\alpha_{p}} F^{t}_{p_1,2} (0,1) $ 
into the space
$F^{t}_{p_1,2} (\re) \otimes_{\alpha_{p}} \ldots \otimes_{\alpha_{p}} F^{t}_{p_1,2} (\re) $. Here $\alpha_p$ is $p$-nuclear tensor norm. It follows that $\ce_d$ is a linear and continuous extension operator. For the identities
\beqq
S_{p_1,2}^t F (\Omega)=F^{t}_{p_1,2} (0,1) \otimes_{\alpha_{p}} \ldots \otimes_{\alpha_{p}} F^{t}_{p_1,2} (0,1)
\eeqq
and 
\beqq
S_{p_1,2}^t F (\R)=F^{t}_{p_1,2} (\re) \otimes_{\alpha_{p}} \ldots \otimes_{\alpha_{p}} F^{t}_{p_1,2} (\re) 
\eeqq 
we refer to Theorem \ref{equal} and \cite{SUt}. Hence  $\ce_d \in \cl (S_{p_1,2}^t F (\Omega), S_{p_1,2}^t F (\R))$.
\\
{\em Step 2.} We consider the commutative diagram
\[
\begin{CD}
S^{t}_{p_1,2}F(\Omega) @ >\mathcal{E}_d >> S^{t}_{p_1,2}F(\mathbb{R}^d) @>\mathcal{W}>>s_{p_1,2}^{t,\Omega}f\\
@V id VV @. @VV id^*V\\
S_{p_2,2}^{0}F(\Omega) @ <R_{\Omega}<< S_{p_2,2}^{0}F(\mathbb{R}^d)@ <\mathcal{W}^* << s_{p_2,2}^{0,\Omega}f\,.
\end{CD}
\]
Here $R_\Omega$ means the restriction to $\Omega$. The mappings $\mathcal{W}$ and $\mathcal{W}^*$ are defined as 
\[ 
\mathcal{W} f := \, \Big( 2^{|\bar{\nu}|_1}\, \langle f, \, \Psi_{\bar{\nu}, \bar{k}}\rangle
\Big)_{\bar{\nu}\in \N_0^d, \, \bar{k} \in A^{\Omega}_{\bar{\nu}}}
\qquad \text{and}\qquad
\mathcal{W}^* \lambda := 
\sum_{\bar{\nu} \in \N_0^ d}
\sum_{\bar{k} \in A^{\Omega}_{\bar{\nu}}} \lambda_{\bar{\nu}, \bar{k}} \, \Psi_{\bar{\nu}, \bar{k}}\,.
\]
Now, the boundedness of $\ce_d, \mathcal{W}, \mathcal{W}^*, R_\Omega$ and property (c) yield
$\omega_n (id)\lesssim \omega_n (id^*)$.
A similar argument with a slightly modified diagram yields $\omega_n (id^*)\lesssim \omega_n (id)$ as well.
\end{proof}
\noindent
{\bf Proof of Theorem \ref{main1}}. The claims in Theorem \ref{main1} are consequences of Lemma \ref{weyl}, Theorem \ref{equal} and Theorems \ref{seq-1}$-$\ref{seq-5}.  \qed
\vskip 0.3cm
\noindent
\noindent
{\bf Proof of Theorem \ref{main2}}. Taking into account Lemma \ref{weyl}, Theorems \ref{equal} and  \ref{conclud2} the claims in Theorem   \ref{main2} follow. \qed
\vskip 0.3cm
\noindent
{\bf Proof of Theorem \ref{main3}}. 
{\em Step 1.} We prove $(i)$.\\
{\it Substep 1.1.} Estimate from above. Under the given restrictions there always exists some $r>\frac{1}{2}$ such that $t>r+\Big(\frac{1}{p}-\frac{1}{2}\Big)_+$.
We  consider the commutative diagram

\tikzset{node distance=4cm, auto}

\begin{center}
\begin{tikzpicture}
 \node (H) {$S^{t}_{p}H(\Omega)$};
 \node (L) [right of =H] {$L_\infty(\Omega)$};
 \node (L2) [right of =H, below of =H, node distance = 2cm ] {$S^{r}_{2}H(\Omega)$};
 \draw[->] (H) to node {$id_1$} (L);
 \draw[->] (H) to node [swap] {$id_2$} (L2);
 \draw[->] (L2) to node [swap] {$id_3$} (L);
 \end{tikzpicture}
\end{center}
The multiplicativity of the Weyl numbers yields 
\[
x_{2n-1} (id_1)\le x_n (id_2)\, x_n (id_3)\, .
\]
Because of the lifting property, see \cite{KiSi}, and Theorem \ref{main1} we have
\beqq
x_n(id_2) &\asymp& x_n(id:S^{t-r}_{p}H(\Omega)\to L_2(\Omega) )\\
 &\asymp&
\left\{
\begin{array}{lll}
n^{-t+r}(\log n)^{(d-1)(t-r )}  & \mbox{if}&   1 < p \le 2\, , \: 
t-r> \frac{1}{p}-\frac{1}{2}\, , 
\\
n^{-t+r +\frac{1}{p}-\frac{1}{2}} (\log n)^{(d-1)(t-r - \frac{1}{p}+\frac{1}{2} )}  
& \mbox{if}& 2< p < \infty\, , \: t-r> \frac{1}{p} \, .
\end{array}
\right.
\eeqq
Now, employing
\beqq
x_n(id_3 )\asymp n^{-r+\frac 12}(\log n)^{(d-1)r}\,,\ \ \ n\geq 2\, ,
\eeqq
see \cite{CKS,Te93} and \eqref{an-xn}, the claim follows.\\
{\em Substep 1.2.} Estimate from below. We use again the multiplicativity of the Weyl numbers, this time in connection with Lemma \ref{rs1}
\beqq
&& \hspace{-1.7cm}
x_{2n-1} (id: ~ S_{p}^t H(\Omega) \to L_2(\Omega)) 
\\
& \le & 
x_{n} (id:~S_{p }^t H(\Omega) \to L_\infty (\Omega)) \, 
x_{n} (id:~L_{\infty}(\Omega) \to L_2(\Omega)) 
 \\
& = &
x_{n} (id:~S_{p}^t H(\Omega) \to L_\infty (\Omega)) \, n^{-1/2} \pi_2(id:~L_{\infty}(\Omega) \to L_2(\Omega))\, \\
& = &
x_{n} (id:~S_{p}^t H(\Omega) \to L_\infty (\Omega)) \, n^{-1/2}\, .
\eeqq
Here we have employed $\pi_2(id:~L_{\infty}(\Omega) \to L_2(\Omega))=1$, see  \cite[Example 1.3.9]{Pi-87}. Since
\beqq
n^{\frac 12}\, x_{2n-1} (id: && \hspace{-0.7cm} S_{p }^t H(\Omega) \to L_2(\Omega))
\\
& \asymp &
\left\{
\begin{array}{lll}
n^{-t+\frac{1}{2}} (\log n)^{(d-1)t}  & \quad & \mbox{if}\quad 1 < p \le 2\, , \:t > \frac{1}{p} - \frac 12 \, , 
\\
n^{-t +\frac{1}{p}} (\log n)^{(d-1)(t - \frac{1}{p} + \frac 12 )}  
& \quad & \mbox{if}\quad 2 < p < \infty\, , \: t>\frac{1}{p} \, , 
\\
\end{array}
\right.
\eeqq
see Theorem \ref{main1}, this proves the claimed estimate from below.\\
{\it Step 2.} We prove (ii).\\
{\it Substep 2.1.} Estimate from above. We employ the chain of continuous embeddings
$$S^{t}_{p}H(\Omega)\rightarrow S^0_{1,2}F(\Omega)\rightarrow L_1(\Omega),$$
see \cite{KiSi} and Theorem \ref{equal}. Now Lemma \ref{weyl}, Theorems \ref{seq-2},  \ref{seq-4}, \ref{seq-5} and the abstract properties of Weyl numbers, see Section \ref{sec-pro}, yield the upper bound. \\
{\it Substep 2.2.} Estimate from below. We prove  the case $1< p\leq 2$ and $ t> 0$. There always exists 
a pair $(\theta,p)$ such that $ \theta\in (0,1)$, $1 < p_0 < p$ and
\[
\| f|L_{p_0}(\Omega)\| \leq \|f|L_1(\Omega)\|^{1-\theta}\, \|f|L_{p}(\Omega)\|^{\theta}\qquad   \text{for all}\quad  f\in L_{p}(\Omega).
 \]
Next we employ the interpolation property of the Weyl numbers, see Proposition \ref{inter}, and obtain
\beqq
x_{2n-1} (id :  S^{t}_{p  }H(\Omega)&\to& L_{p_0}(\Omega))\\ &\lesssim&
x_n^{1-\theta}(id: S^{t}_{p }H(\Omega)\to L_1(\Omega))\, \, x_n^{\theta}(id:S^{t}_{p }H(\Omega)\to L_{p}(\Omega)).
\eeqq
Note that  $1< p_0<p\leq 2$ and $ t> 0$ imply
\beqq
x_{n}(id :S^{t}_{p }H(\Omega)\to L_{p}(\Omega))&\asymp& x_{n}(id :S^{t}_{p  }H(\Omega)\to L_{p_0}(\Omega))\\
&\asymp& n^{-t}(\log n)^{(d-1)t},
\eeqq
see Theorem \ref{main1}. This leads to
$$ x_n(id: S^{t}_{p }H(\Omega)\to L_1(\Omega))\gtrsim n^{-t}(\log n)^{(d-1)t}.$$
The lower bounds in the remaining cases can be proved similarly.\qed
\vskip 3mm
\noindent
{\bf Acknowledgements:}\ The author would like to thank Professor Winfried Sickel for many valuable discussions about this work.

\end{document}